\documentclass[10pt]{article} \topmargin=0pt \pagestyle{plain}
\raggedbottom \topmargin=-1in \oddsidemargin=-0.15in \textwidth
6.8in \textheight 9.68in
\usepackage[hang]{caption2}
\usepackage{float}
\usepackage{epsfig}
\usepackage{amssymb,amsthm}
\usepackage{amsmath}
\usepackage{graphicx, color, subfigure,tikz}
\usepackage{epstopdf}
\usepackage{multirow}
\usepackage{fancyhdr}
\usepackage[english]{babel}
\usepackage{fancybox}
\usepackage{bm}
\usepackage{verbatim}

\usepackage{colortbl}
\usepackage{mathrsfs}
\usepackage{siunitx}
 \usepackage{booktabs}
\usepackage{hhline}
\newcommand{\be}{\begin{equation}}
\newcommand{\ee}{\end{equation}}

\newcommand\QQ[1]{\textcolor{blue}{#1}}

\newtheorem{theorem}{Theorem}[section]
\newtheorem{lemma}{Lemma}[section]
\newtheorem{corollary}{Corollary}[section]
\newtheorem{definition}{Definition}[section]

\newtheorem{remark}{Remark}[section]
\newtheorem{exa}{Example}[section]
\graphicspath{{Figures/}}

\begin{document}
\baselineskip 1pc
\renewcommand{\refname}{\begin{flushleft}{{\bf\small
Reference}}\end{flushleft}}
\title {{\bf Stability analysis of the Eulerian-Lagrangian finite volume methods for nonlinear hyperbolic equations in one space dimension}
 \thanks{\small The first author was supported by Simons Foundarion 961585. The second and last author were supported by NSF grant NSF-DMS-2111253, Air Force Office of Scientific Research FA9550-22-1-0390 and Department of Energy DE-SC0023164.}
}
\author{Yang Yang\thanks{Department of Mathematical Sciences, Michigan Technological University, Houghton, MI 49931, USA. E-mail:yyang7@mtu.edu},
\quad Jiajie Chen\thanks{Department of Mathematics, University of Delaware, Newark, DE 19716, USA. E-mail: jiajie@udel.edu },
\quad Jing-Mei Qiu\thanks{Department of Mathematics, University of Delaware, Newark, DE 19716, USA. E-mail: jingqiu@udel.edu }
}
\date{}
\maketitle
\begin{center}
\begin{minipage}{13.4cm}
{\bf \small Abstract:} In this paper, we construct a novel Eulerian-Lagrangian finite volume (ELFV) method for nonlinear scalar hyperbolic equations in one space dimension. It is well known that the exact solutions to such problems may contain shocks though the initial conditions are smooth, and direct numerical methods may suffer from restricted time step sizes. To relieve the restriction, we propose an ELFV method, where the space-time domain was separated by the partition lines originated from the cell interfaces whose slopes are obtained following the Rakine-Hugoniot junmp condition. Unfortunately, to avoid the intersection of the partition lines, the time step sizes are still limited. To fix this gap, we detect effective troubled cells (ETCs) and carefully design the influence region of each ETC, within which the partitioned space-time regions are merged together to form a new one. Then with the new partition of the space-time domain, we theoretically prove that the proposed first-order scheme with Euler forward time discretization is total-variation-diminishing and maximum-principle-preserving with {at least twice} larger time step constraints than the classical first order Eulerian method for Burgers' equation. Numerical experiments verify the optimality of the designed time step sizes.
{\small

}
{\small \bf Keywords:} {\small finite volume method, Eulerian-Lagrangian, nonlinear hyperbolic equations, stability analysis, total variation diminishing, maximum-principle}\\
{\small \bf AMS(2000) Subject Classifications: } {\small 65M08, 65M12}
\end{minipage}
\end{center}
\section{Introduction}
\label{sec1}
Nonlinear hyperbolic equations have many significant applications in traffic flow, large scale supply-chains, compressible gas dynamics, magneto-hydrodynamics and continuum physics, etc. It is well known that the exact solutions for nonlinear hyperbolic equations may develop shocks though the initial condition is sufficiently smooth, and direct high-order numerical methods may yield oscillatory approximations near the shocks. Therefore, numerical methods for nonlinear hyperbolic conservation laws are challenging to design.

Finite volume (FV) methods are of great interesting in solving nonlinear hyperbolic equations \cite{leveque2002finite}. The methods apply numerical fluxes to guarantee the local mass conservation. Traditional FV methods are coupled with strong-stability-preserving (SSP) Runge-Kutta (RK) time discretization, and they may suffer from the limited time step sizes \cite{gottlieb2009high}. There are several previous attempts in the literature to relieve the restriction. For example, in \cite{qiu2008convergence} a third-order Godunov-type scheme with total variation bounded property is proposed for conservation laws with large time stepping sizes. However, the scheme in \cite{qiu2008convergence} relies on an extra tracking of characteristics via the Lax-Hopf formula.  Another attempt was to design the Eulerian-Lagrangian (EL) FV \cite{healy1998solution, huang2012eulerian, Joseph2022} and the EL discontinuous Galerkin (DG) methods \cite{Cai2021}. Yet, many of these existing research are designed for linear or nonlinear transport dominated models without shocks, i.e. crossing of characteristics.  
In the EL framework, often times the Rankine-Hugoniot (R-H) jump condition was used to approximate the speed of the characteristic lines, 
giving a partition of the space-time domain between the current and next time levels. It is easy to show that if the characteristics do not intersect, the numerical approximations from the first-order spatial discretization and Euler forward time integration is total-variation-diminishing (TVD) and maximum-principle-preserving (MPP) (see Lemma \ref{modify_Harten}). Unfortunately, in the presence of shocks, to avoid the intersection of the characteristic lines, the time step sizes are still restricted. There is little previous works in resolving such an issue and theoretically discussing the precise upper bound of the time step sizes for EL methods with the presence of shocks. 

In this paper, we develop a novel EL FV method using a uniform mesh for 1D nonlinear hyperbolic problems with the consideration of shocks. A brief outline of the algorithm is as follows:
\begin{enumerate}
\item Partition space-time regions by approximating characteristic lines with the R-H jump condition from cell interfaces. We name the characteristic lines as partition lines in the rest of the paper.
\item Detect effective troubled cells (ETCs), on which the partition lines originated from cell boundaries intersect within a pre-determined time step size or the difference between the numerical approximations in the ETC and its neighbor cell is larger than some threshold.
\item Construct the influence region of each ETC (see Definition \ref{ir}). 
\item Merge the cells within the influence region of each ETC to form a larger one. In general, the partition lines based on the new partition will not intersect under the original time step size.
\item Keep the original numerical fluxes at the boundaries of the merged cell.
\item Update numerical solutions by using the EL FV method \cite{Joseph2022} on the new mesh.
\item Map the numerical solutions to the background uniform distributed cells.
\end{enumerate}
There are two main challenging aspects that we need to address in designing effective EL FV methods.
\begin{itemize}
\item The size of the influence region is not easy to determine. Intuitively, the larger the influence region, the larger the time step size are allowed to avoid the intersection of partition lines. On the other hand, large influence regions may yield poor resolutions near the ETCs that often involve shocks.
{In Section \ref{secsimple}, via manipulated simple examples, we demonstrate that if the influence region is larger than some threshed, further enlargement of the influence region will not result in the increase in time step sizes if the influence regions of the ETCs do not intersect.} 
The influence region we proposed mainly contains five cells centered at the ETC. In some special cases, we may need to include one more cell in the influence region to gain the TVD property or reduce one cell in the influence region to gain better resolution. See Definition \ref{ir} for the detailed construction of the influence region for each ETC.
\item Determine the precise upper bound for the time step size with TVD and MPP for the first-order scheme. {On one hand, with merged cells in the influence region, the original piece-wise constant numerical approximation is being $L^2$ projected to a new constant; and the R-H jump condition will yield new slopes for the partition lines for the merged region. On the other hand, we intend to keep the partition lines and the numerical fluxes at boundaries of the influence region for better resolution.}
With the new average value in the merged cell, but the original partition lines and numerical fluxes, the classical TVD analysis cannot be applied. To fix this gap, i.e. to enable the use of classical TVD analysis, we reassign the numerical solutions on cells of the influence region according to the following principles: 
\begin{enumerate}
    \item [(a)] The reassigned numerical solutions on the influence region conserves the ``total mass" within the influence region;
    \item [(b)] The total variation of reassigned numerical solutions is no larger than the original one to guarantee the TVD property; 
    \item [(c)] The reassigned numerical solutions are MPP; 
    \item [(d)] The partition lines determined from the R-H condition originated from the interior cell interfaces of the influence region, as well as the two boundaries of the influence region based on the updated numerical approximations, do not intersect within the pre-determined time step size.
\end{enumerate}
\end{itemize}

The above considerations guide the design of influence region with provable stability properties. In particular, to keep the original numerical flux at the two boundaries of the influence region, reassigned solutions on the two boundary cells of the influence region have to stay the same. Moreover, since we need to keep the ``total mass", the influence region must contain more than three cells.
Due to symmetry, most the influence regions we construct contain five cells. However, in some extreme cases, we may need to merge one more cell next to the boundary or in some unlikely mild cases, we can merge one less cell (See Definition \ref{ir}). 


With suitable construction of the influence region, it is possible to apply the idea for the traditional EL FV method to update the solution and theoretically find the optimal time step size for the TVD and MPP properties. The theoretical analysis is achieved by designing suitable admissible set based on the properties (a)-(d) given above, and find optimal reassigned values of solutions on the interior of the influence region to satisfy the last criteria.  Note that the above strategy to reassign numerical solutions is only for the sake of numerical analysis to obtain the upper bound on the time step constraint (see Theorem 3.1). In the numerical implementation, one would simply first perform the merging of influence regions of ETCs; followed by a first order EL FV method on the updated mesh.  

 

The rest of the paper is organized as follows. We develop the EL FV methods in Section \ref{sec2}. The stability analysis without and with the merging strategy will be presented in Section \ref{sec3}. Numerical experiments will be given in Section \ref{secexample} to verify the theoretical analysis. We will end with concluding remarks in Section \ref{secend}.

\section{The Eulerian-Lagrangian finite volume formulation}
\label{sec2}\setcounter{equation}{0}

In this section, we propose the FV spatial discretization with Euler forward time discretization under the EL framework for scalar nonlinear hyperbolic equations in one space dimension with merging strategy, extending the work proposed in \cite{Cai2021}. The precise definition of the terminologies used in constructing the numerical method will be summarized in the end of this section.

We consider the following nonlinear hyperbolic equation
\begin{equation}
\left\{\begin{array}{l}u_t+f(u)_x = 0.\\u(x,0)=u_0\end{array}\right.
\label{scalar1d}
\end{equation}
subject to periodic boundary conditions and assume $b\leq u_0\leq a$. We give a partition of the computational domain $\Omega=[0, 2\pi]$ as
$$0=x_{\frac12}< x_{\frac32}<\cdots< x_{N+\frac12} =2\pi,$$
and denote $I_j=[x_{j-\frac12}, x_{j+\frac12} ]$ as the cells with length $\Delta x_j=x_{j+\frac12}-x_{j-\frac12}$. In this paper, we consider uniform mesh and denote $\Delta x=\Delta x_j$, $j=1,\cdots,N$. Let $t^n$ be the $n-$th time level and denote $\Delta t^n =t^{n+1}-t^n$ as the time step size. In this paper, we consider uniform partition of the time domain, and use $\Delta t$ for the time step size. However, this assumption is not essential. Finally, we denote $\tilde\lambda=\frac{\Delta t}{\Delta x}$.

The EL FV scheme starts from constructing a partition of the space-time domain $\Omega_j$'s (see figure \ref{schematic_1d}), followed by a forward Euler update of the solution on a non-uniform mesh $I_j^*$'s, and an $L^2$ re-projection to obtain the solution on the original uniform mesh $I_j$'s. The specific procedure is outlined below.  
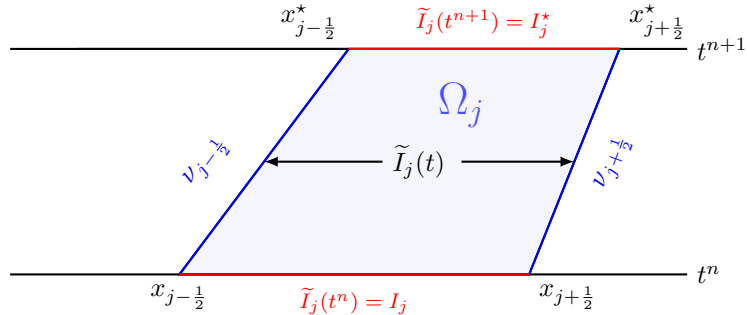
\begin{figure}[h!]
\centering
\begin{tikzpicture}[x=1.5cm,y=1.5cm]
  \begin{scope}[thick]

  \draw[fill=blue!4] (0.,2) -- (-1.5,0) -- (1.6,0) -- (2.4,2)
      -- cycle;
   \node[blue!70, rotate=0] (a) at ( 1. ,1.5) {\LARGE $\Omega_j$ };
    \draw[black]                   (-3,0) node[left] {} -- (3,0)
                                        node[right]{$t^{n}$};
    \draw[black] (-3,2) node[left] {} -- (3,2)
                                        node[right]{$t^{n+1}$};
     \draw[] (-2.4,2) -- (0,2) node[above left] {$x_{j-\frac12}^\star$};
     \draw[] (0,2) -- (2.4,2) node[above right] {$x_{j+\frac12}^\star$};


            \draw[blue,thick] (0.,2) node[left] {} -- (-1.5,0)
                                        node[black,below]{$x_{j-\frac12}$ };

                \draw[blue,thick] (2.4,2) node[left] {} -- (1.6,0)
                                        node[black,below right]{ $x_{j+\frac12}$};
\draw [color=red,xshift=0pt,yshift=0pt](-1.5,0) -- (1.6,0) node [red,midway,xshift=0cm,yshift=-10pt]
{\footnotesize $\widetilde{I}_j(t^{n}) = I_{j}$};

\draw [color=red,xshift=0pt,yshift=0pt]
(0,2) -- (2.4,2) node [red,midway,xshift=0cm,yshift=10pt]
{\footnotesize $\widetilde{I}_j(t^{n+1}) = I_{j}^\star$};

\draw[-latex,thick](0.3,1)node[right,scale=1.]{$\widetilde{I}_j(t)$}
        to[out=180,in=0] (-0.75,1) node[above left=2pt] {};

\draw[-latex,thick](1,1)node[right,scale=1.]{ }
        to[out=0,in=180] (2,1) node[above left=2pt] {};

  \node[blue, rotate=54] (a) at (-1.25,1.) { $\nu_{j-\frac12}$ };
  \node[blue, rotate=70] (a) at (2.35,1.) { $\nu_{j+\frac12}$ };
  \end{scope}
\end{tikzpicture}
\caption{Illustration for the space-time region for the EL FV formulation.}\label{schematic_1d}
\end{figure}

\begin{enumerate}
\item Given the numerical approximations $u$ at $t^n$. We use $\nu_{j+\frac12}$ to be an approximation of the local characteristics speed of the original problem \eqref{scalar1d} at $x_{j+\frac12}$. In this paper, we take
\begin{equation}\label{nu}
\nu_{j+\frac12} =
\begin{cases}
\frac{[f]}{[u]}|_{{j+\frac12}}, & \text{if} \ u_{j+\frac12}^+  \neq u_{j+\frac12}^-, \\
f'\left(u_{j+\frac12}\right), &  \text{otherwise},
\end{cases}
\end{equation}
where $[u]=u^+-u^-$ is the jump of $u$, with $u^+$ and $u^-$ being the right and left limit of the reconstructed numerical approximation $u$ at the cell interface.
\item Draw partition lines originated from $x_{j\pm\frac12}$ with speed  $\nu_{j\pm\frac12}$. If the numerical cell average in $I_j$ is significantly larger than that in the right neighbor or significantly smaller than that in the left neighbor, or the partition lines intersect before $t=t^{n+1}$, then $I_j$ is defined as a troubled cell. In the latter case, we compute the time that the intersection appears, i.e. to find $t^\star_{j}$ such that 
$$
x_{j-\frac12}+\nu_{j-\frac12}t^\star=x_{j+\frac12}+\nu_{j+\frac12}t^\star.
$$
Otherwise, we define $I_j^\star=[x^\star_{j-\frac12},x^\star_{j+\frac12}]$ as the line segment between the two partition lines at $t=t^{n+1}$ with length $\Delta x^\star_j=x^\star_{j+\frac12}-x^\star_{j-\frac12}$, and define $t^\star_j=\Delta t$. It is easy to see
\begin{equation}\label{cell_relation}
\Delta x_j^\star=\Delta x+\nu_{j+\frac12}\Delta t-\nu_{j-\frac12}\Delta t.
\end{equation}
The region enclosed by the two partition lines, $I_j$ and $I_j^\star$ is called the space-time domain $\Omega_j$. Moreover, we define $\tilde{I}_j(t)$ as the line segment between the two partition lines at time $t\in[t^n,t^{n+1}]$, and the left and right endpoints are $\tilde{x}_{j-\frac12}(t)$ and $\tilde{x}_{j+\frac12}(t)$, respectively.
\item For each troubled cell $I_i$, we construct the influence region of $I_i$. In general, we choose the influence region to be 
\begin{equation}
\label{eq: m}
I_{i-m}, \cdots,  I_{i}, \cdots, I_{i+m}
\end{equation}
with some $m$. In some special circumstances, the influence region may contain one more cell or one less cell. The precise construction of the influence region will be given in Definition \ref{ir}. Then we merge the cells in the influence region and denote the new cell as $I_i$. If there is another troubled cell between $I_{i-1}$ and $I_{i+1}$, we can select $I_i$ as the effective troubled cell (ETC) and the influence region is designed based on $I_i$ only. If an influence region does not overlap with that of other troubled cells, then we say the corresponding troubled cell is isolated. Otherwise, we combine the overlapping influence regions together.  In Figure~\ref{fig: merge_spacetime} we illustrate the merging of five cells in the influence region of an isolated ETC (region bounded in red lines).
\begin{figure}[htb]
    \centering
    \includegraphics[width=3.0in]{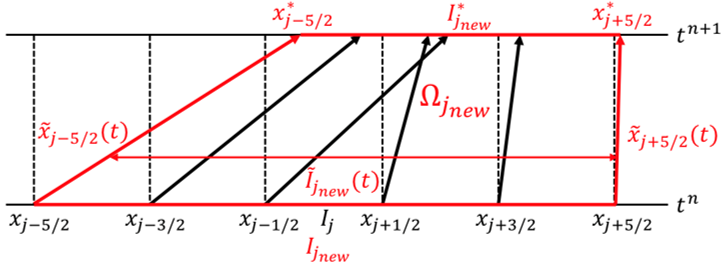}
    \caption{The general case (case I4 in Def. \ref{ir}) of merging of influence region for an isolated troubled cell.}
    \label{fig: merge_spacetime}
\end{figure}

\item After merging the cells, we can construct the first order EL FV scheme on the updated space time domain $\Omega_j$ by integrating \eqref{scalar1d} over $\Omega_j$ to obtain
\begin{align}
0 =& \int_{t^n}^{t^{n+1} } \int_{\widetilde{I}_j(t)} u_t\ dxdt
 + \int_{t^n}^{t^{n+1} } \int_{\widetilde{I}_j(t)}  f(u)_x\ dxdt \nonumber \\
 =& \int_{t^n}^{t^{n+1} } \left[
\frac{d}{dt} \int_{\widetilde{I}_j(t) }u\ dx -\left( \left.\nu u\right|_{\widetilde{x}_{j+\frac12} } - \left.\nu u\right|_{\widetilde{x}_{j-\frac12} }\right) +\left( \left.f\right|_{\widetilde{x}_{j+\frac12} }  - \left. f \right|_{\widetilde{x}_{j-\frac12} }  \right)
\right]\ dt.
\label{eq: int}
\end{align}
Considering the differential form of \eqref{eq: int}, we have
\begin{equation}
\frac{d}{dt} \int_{\widetilde{I}_j(t)}u\ dx + \left. \left(f-\nu u\right) \right|_{\widetilde{x}_{j+\frac12} } - \left. \left(f-\nu u\right) \right|_{\widetilde{x}_{j-\frac12} }=0.
\label{mol}
\end{equation}
Let
\begin{equation}
F_{j\pm\frac12}(u) \doteq f(u) - \nu_{j\pm\frac12} u,
\label{eq: F}
\end{equation}
we can rewrite \eqref{mol} as
\begin{equation}
\frac{d}{dt} \int_{\widetilde{I}_j(t)}u\ dx + \left.\hat{F} \right|_{\widetilde{x}_{j+\frac12}(t) } - \left. \hat{F} \right|_{\widetilde{x}_{j-\frac12}(t) }=0.
\label{mol_2}
\end{equation}
The proposed EL FV method with Euler forward time discretization is
$$
\Delta x_j^\star \bar{u}^{\star}_j-\Delta x \bar{u}_j + \Delta t\left(\hat{F}_{j+\frac12}  - \hat{F}_{j-\frac12}\right)=0.
$$
where $\bar{u}_j$ and $\bar{u}_j^{\star}$ are the numerical cell averages on $I_j$ at time level $n$ and $I_j^\star$ at time level $n+1$, respectively. $\hat{F}$ is the numerical flux given as
$$
\hat{F}_{j+\frac12}=\frac12\left(F_{j+\frac12}(u^-_{j+\frac12})+F_{j+\frac12}(u^+_{j+\frac12})-\alpha_{j+\frac12}(u^+_{j+\frac12}-u^-_{j+\frac12})\right).
$$
Due to \eqref{nu} we have
$$
F_{j+\frac12}(u_{j+\frac12}^-)\stackrel{\eqref{eq: F}}{=} f(u^-_{j+\frac12})-\nu_{j+\frac12} u^-_{j+\frac12}\stackrel{\eqref{nu}}{=}f(u^+_{j+\frac12})-\nu_{j+\frac12} u^+_{j+\frac12}\stackrel{\eqref{eq: F}}{=}F_{j+\frac12}(u^+_{j+\frac12}):=F_{j+\frac12},
$$
leading to
$$
\hat{F}_{j+\frac12}=F_{j+\frac12}-\frac12\alpha_{j+\frac12}(u^+_{j+\frac12}-u^-_{j+\frac12}).
$$
We can show that the numerical flux given in \eqref{flux} is consistent, by checking that 
$\hat{F}_{j+\frac12}(u,u)=F_{j+\frac12}(u)=f(u)-\nu_{j+\frac12}u.$
In this paper, we consider $f$ to be differentiable and strictly convex. For a monotone numerical flux, we choose
$$
\alpha_{j+\frac12}=\max\{f'(u^+_{j+\frac12})-\nu_{j+\frac12},\nu_{j+\frac12}-f'(u^-_{j+\frac12}),0\}.
$$

\item Finally, we map the solution back to the original background uniform mesh by an $L^2$ projection.
\end{enumerate}

If we consider the Burger's equation, i.e. $f(u)=\frac{u^2}2$, the EL FV scheme for the Burgers' equation is given as
\begin{equation}\label{mol_3_first}
\Delta x_j^\star {u}^{\star}_j-\Delta x {u}_j + \Delta t\left(\hat{F}_{j+\frac12}  - \hat{F}_{j-\frac12}\right)=0,
\end{equation}
where
\begin{equation}\label{flux}
\hat{F}_{j+\frac12}
=F_{j+\frac12}-\frac12\alpha_{j+\frac12}(u_{j+1}-u_{j}),
\end{equation}
with
\begin{equation}\label{F}
F_{j+\frac12}(u)=\frac{u^2}2-\nu_{j+\frac12}u,\quad F_{j+\frac12}=F_{j+\frac12}(u_{j})=F_{j+\frac12}(u_{j+1}),
\end{equation}
and
\begin{equation}\label{alpha}
\alpha_{j+\frac12}=\max\left\{\frac{u_{j+1}-u_j}2,0\right\},\quad \nu_{j+\frac12}=\frac{u_{j+1}+u_j}2.
\end{equation}
The choice of $\alpha$ in \eqref{alpha} is 0 if the local Riemann problem at the cell interface yields a shock wave, i.e. $u_{j+1}<u_j$. Since we use Rankine-Hugoniut jump condition to construct the partition line, no penalty is necessary in this case. However, for rarefaction waves, the traditional Lax-Friedrichs flux should be applied, and $\alpha$ is not 0.
In the rest of the paper, we consider first-order spatial discretization and Burgers' equation for simplicity, while the algorithm and theoretical results  
can be extended to general strictly convex flux function $f$.

\section{The stability analysis for Burger's equation}
\label{sec3}\setcounter{equation}{0}
In this section, we proceed to prove the stability of the proposed scheme for Burger's equation under suitable time step size. We consider the TVD stability and the maximum-principle of the numerical approximations. Suppose $u=\{u_j\}_{j=1}^N$ is the numerical approximation for the problems with compact support, then we define
$$
TV(u)=\sum_{j=1}^{N}|u_{j+1}-u_j|
$$
as the total variation of $u$. For periodic boundary conditions, the definition is similar.
A scheme is TVD if $TV(u^{n+1})\leq TV(u^{n})$. Moreover, suppose the initial condition $u_0$ satisfies $b\leq u^0\leq a$, then we say the numerical solutions are MPP if $b\leq u^n \leq a$ for all $n>0$. We first prove that if no troubled cells exist corresponding to the case of $m=0$ as in eq.~\eqref{eq: m} (a typical situation before a shock appears), the first-order scheme is TVD and MPP. Then we consider the general case with merge of influence region corresponding to $m=2$ (a typical situation with the presence of shocks). 

\subsection{Preliminary results for $m=0$}
\label{secm0}
In this subsection, we consider a special case that $m=0$, i.e. no cells merge, and try to prove some preliminary results. The most useful lemma to obtain the TVD stability is the following generalized Harten's Lemma extending the results given in \cite{Harten}.
\begin{lemma}\label{Harten_mod}
Suppose the solution to problem \eqref{scalar1d} has compact support or satisfies periodic boundary conditions. If the scheme can be written as
$$
u^{n+1}_j=u_j+\lambda_j\left(C_{j+\frac12}\Delta_+u_j-D_{j-\frac12}\Delta_-u_j\right)
$$
where
$$
\Delta_+u_j=u_{j+1}-u_j,\quad\Delta_-u_j=u_j-u_{j-1}.
$$
and
$C_{j+\frac12}\geq0$, $D_{j+\frac12}\geq0$, $\lambda_j\geq0$, $\lambda_jC_{j+\frac12}+\lambda_{j+1}D_{j+\frac12}\leq 1$, then the scheme is TVD.
\end{lemma}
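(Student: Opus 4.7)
The plan is to follow the standard Harten-style argument: derive a recursion for the forward difference $\Delta_+ u_j^{n+1} = u_{j+1}^{n+1} - u_j^{n+1}$, express it as a nonnegative linear combination of the nearby forward differences $\Delta_+ u_{j-1}$, $\Delta_+ u_j$, $\Delta_+ u_{j+1}$, take absolute values, and sum over $j$ using the boundary/periodicity assumption to shift indices.

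First I will use the identity $\Delta_- u_{j+1} = \Delta_+ u_j$ to rewrite
\begin{align*}
u_{j+1}^{n+1} - u_j^{n+1} &= \Delta_+ u_j + \lambda_{j+1}\bigl(C_{j+\tfrac32}\Delta_+ u_{j+1} - D_{j+\tfrac12}\Delta_+ u_j\bigr) - \lambda_j\bigl(C_{j+\tfrac12}\Delta_+ u_j - D_{j-\tfrac12}\Delta_- u_j\bigr) \\
&= \bigl(1 - \lambda_j C_{j+\tfrac12} - \lambda_{j+1} D_{j+\tfrac12}\bigr)\Delta_+ u_j + \lambda_{j+1} C_{j+\tfrac32}\,\Delta_+ u_{j+1} + \lambda_j D_{j-\tfrac12}\,\Delta_- u_j.
\end{align*}
Under the three sign/size hypotheses on $C$, $D$, $\lambda$, each of the three coefficients on the right is nonnegative. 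Thus the triangle inequality gives
\begin{equation*}
|u_{j+1}^{n+1} - u_j^{n+1}| \le \bigl(1 - \lambda_j C_{j+\tfrac12} - \lambda_{j+1} D_{j+\tfrac12}\bigr)|\Delta_+ u_j| + \lambda_{j+1} C_{j+\tfrac32}\,|\Delta_+ u_{j+1}| + \lambda_j D_{j-\tfrac12}\,|\Delta_- u_j|.
\end{equation*}

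Next I will sum over $j$. Under periodic boundary conditions (or compact support, where the shifted sums still agree), I can reindex the two ``transport'' terms: $\sum_j \lambda_{j+1} C_{j+\tfrac32}|\Delta_+ u_{j+1}| = \sum_j \lambda_j C_{j+\tfrac12}|\Delta_+ u_j|$ and $\sum_j \lambda_j D_{j-\tfrac12}|\Delta_- u_j| = \sum_j \lambda_{j+1} D_{j+\tfrac12}|\Delta_+ u_j|$. Adding these to the first term, the contributions $\lambda_j C_{j+\tfrac12}|\Delta_+ u_j|$ and $\lambda_{j+1} D_{j+\tfrac12}|\Delta_+ u_j|$ cancel exactly with the subtractions inside the coefficient $(1 - \lambda_j C_{j+\tfrac12} - \lambda_{j+1} D_{j+\tfrac12})$. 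What remains is $\sum_j |\Delta_+ u_j| = TV(u^n)$, yielding $TV(u^{n+1}) \le TV(u^n)$.

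There is no serious obstacle here; the only care point is the index shift in the sum, which is exactly where the ``compact support or periodic boundary'' hypothesis is used so that no boundary terms appear. The nonnegativity of the convex-combination-like coefficient $1 - \lambda_j C_{j+\tfrac12} - \lambda_{j+1} D_{j+\tfrac12}$ is guaranteed by the stated CFL-type bound, which is precisely what permits the termwise triangle inequality and hence the conclusion.
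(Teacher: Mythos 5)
Your proof is correct and follows essentially the same route as the paper's: apply $\Delta_+$ to the scheme, rewrite everything in terms of forward differences so that $\Delta_+ u_j^{n+1}$ is a nonnegative combination of $\Delta_+ u_{j-1}$, $\Delta_+ u_j$, $\Delta_+ u_{j+1}$, then take absolute values, sum, and reindex using periodicity or compact support. The only cosmetic difference is that you keep $\Delta_- u_j$ in the last term and shift indices at the summation stage rather than converting to $\Delta_+ u_{j-1}$ immediately, which changes nothing.
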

\begin{proof}
We apply the $\Delta_+$ operator, then
{\setlength\arraycolsep{2pt}\begin{eqnarray*}
\Delta_+ u_j^{n+1}&=&\Delta_+ u_j+\lambda_{j+1}C_{j+\frac32}\Delta_+u_{j+1}-\lambda_jC_{j+\frac12}\Delta_+u_j-\lambda_{j+1}D_{j+\frac12}\Delta_-u_{j+1}+\lambda_jD_{j-\frac12}\Delta_-u_j\\
&=&\Delta_+ u_j+\lambda_{j+1}C_{j+\frac32}\Delta_+u_{j+1}-\lambda_jC_{j+\frac12}\Delta_+u_j-\lambda_{j+1}D_{j+\frac12}\Delta_+u_j+\lambda_jD_{j-\frac12}\Delta_+u_{j-1}\\
&=&(1-\lambda_jC_{j+\frac12}-\lambda_{j+1}D_{j+\frac12})\Delta_+u_j+\lambda_{j+1}C_{j+\frac32}\Delta_+u_{j+1}+\lambda_jD_{j-\frac12}\Delta_+u_{j-1}
\end{eqnarray*}}
Therefore,
{\setlength\arraycolsep{2pt}\begin{eqnarray*}
\sum_j|\Delta_+ u_j^{n+1}|&\leq& \sum_{j}(1-\lambda_jC_{j+\frac12}-\lambda_{j+1}D_{j+\frac12})|\Delta_+u_j|+\sum_j\lambda_{j+1}C_{j+\frac32}|\Delta_+u_{j+1}|+\sum_j\lambda_jD_{j-\frac12}|\Delta_+u_{j-1}|\\
&=&\sum_{j}(1-\lambda_jC_{j+\frac12}-\lambda_{j+1}D_{j+\frac12})|\Delta_+u_j|+\sum_j\lambda_jC_{j+\frac12}|\Delta_+u_j|+\sum_j\lambda_{j+1}D_{j+\frac12}|\Delta_+u_j|\\
&=&\sum_{j}|\Delta_+u_j|.
\end{eqnarray*}}
\end{proof}

\begin{lemma}\label{modify_Harten}
Suppose the solution to problem \eqref{scalar1d} has compact support. Assume 
$\{\Omega_j\}_{j=1}^N$ is the partition of the space-time domain $\Omega\times[t^n,t^{n+1}]$ with no trouble cells. If the initial condition $u_0$ satisfies $b\leq u_0\leq a$, then the first-order numerical approximation obtained from \eqref{mol_3_first} is TVD and MPP under the condition that
$\Delta t\leq\lambda\Delta x.$ Hence the EL FV scheme with a final step of $L^2$ projection to the uniform background cells is TVD and MPP.
\end{lemma}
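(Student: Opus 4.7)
The plan is to apply the generalized Harten Lemma (Lemma~\ref{Harten_mod}) to the intermediate update on the Lagrangian mesh $\{I_j^\star\}$, and then show separately that the final $L^2$ projection back to the uniform background mesh preserves both TVD and MPP.

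First I would solve \eqref{mol_3_first} for $u_j^\star$. The key algebraic observation is that under the R--H choice \eqref{alpha}, the anchored flux value $F_{j+\frac12}$ in \eqref{F} can be written either as $\tfrac12 u_j^2-\nu_{j+\frac12}u_j$ or as $\tfrac12 u_{j+1}^2-\nu_{j+\frac12}u_{j+1}$. Anchoring both $F_{j+\frac12}$ and $F_{j-\frac12}$ at the common vertex $u_j$, the convective terms collapse to $F_{j+\frac12}-F_{j-\frac12}=-(\nu_{j+\frac12}-\nu_{j-\frac12})u_j$. Combining this with the viscous part of \eqref{flux} and the cell-expansion identity \eqref{cell_relation}, the coefficient of $u_j$ collapses to $1$ and I obtain
\begin{equation*}
u_j^\star \;=\; u_j \;+\; \frac{\Delta t}{2\Delta x_j^\star}\alpha_{j+\frac12}\Delta_+ u_j \;-\; \frac{\Delta t}{2\Delta x_j^\star}\alpha_{j-\frac12}\Delta_- u_j,
\end{equation*}
which is exactly the template of Lemma~\ref{Harten_mod} with $\lambda_j=\Delta t/(2\Delta x_j^\star)$, $C_{j+\frac12}=D_{j+\frac12}=\alpha_{j+\frac12}$.

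Next I would verify the hypotheses of Lemma~\ref{Harten_mod}. Non-negativity of $\lambda_j$, $C_{j+\frac12}$ and $D_{j+\frac12}$ is immediate from \eqref{alpha} and from the absence of troubled cells, which guarantees $\Delta x_j^\star>0$. The summability condition $\lambda_j C_{j+\frac12}+\lambda_{j+1}D_{j+\frac12}\le 1$ reduces to
\begin{equation*}
\frac{\Delta t\,\alpha_{j+\frac12}}{2}\left(\frac{1}{\Delta x_j^\star}+\frac{1}{\Delta x_{j+1}^\star}\right)\le 1,
\end{equation*}
and using $\alpha_{j+\frac12}\le\tfrac12(a-b)$ together with the uniform lower bound $\Delta x_j^\star\ge(1-c)\Delta x$ obtained from \eqref{cell_relation} and $|\nu_{j\pm\frac12}|\le\max(|a|,|b|)$, this translates into $\Delta t\le\lambda\Delta x$ for an explicit $\lambda>0$ depending only on $a,b$. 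TVD on the Lagrangian mesh then follows from Lemma~\ref{Harten_mod}. For MPP, I would re-expand the same update as the explicit convex combination
\begin{equation*}
u_j^\star \;=\; \bigl(1-\lambda_j\alpha_{j+\frac12}-\lambda_j\alpha_{j-\frac12}\bigr)u_j \;+\; \lambda_j\alpha_{j+\frac12}\,u_{j+1} \;+\; \lambda_j\alpha_{j-\frac12}\,u_{j-1},
\end{equation*}
whose coefficients are non-negative under the same CFL and sum to $1$, so $b\le u_j^\star\le a$.

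Finally, for the $L^2$ projection back onto the uniform mesh, the updated average is $\bar u_j^{n+1}=\tfrac{1}{\Delta x}\sum_k |I_j\cap I_k^\star|\,u_k^\star$, a convex combination of the $u_k^\star$'s; MPP is inherited directly. TVD of the projection rests on the standard fact that cell-averaging a piecewise-constant function from one partition onto another cannot increase total variation, since each projected average lies in the min-max envelope of the overlapping source values and preserves local monotonicity. The main obstacle here is the CFL bookkeeping in the middle step: because $\Delta x_j^\star$, $\alpha_{j\pm\frac12}$ and $\nu_{j\pm\frac12}$ are all solution-dependent, producing a single mesh-uniform constant $\lambda$ that simultaneously closes the Harten inequality and the MPP convex-combination inequality requires using the maximum principle $b\le u\le a$ in tandem with \eqref{cell_relation} to prevent $\Delta x_j^\star$ from degenerating.
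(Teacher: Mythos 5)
Your derivation of the convex-combination form
\begin{equation*}
u_j^\star=\frac{\lambda_j}{2}\alpha_{j+\frac12}u_{j+1}+\Bigl(1-\frac{\lambda_j}{2}\alpha_{j+\frac12}-\frac{\lambda_j}{2}\alpha_{j-\frac12}\Bigr)u_j+\frac{\lambda_j}{2}\alpha_{j-\frac12}u_{j-1},\qquad \lambda_j=\frac{\Delta t}{\Delta x_j^\star},
\end{equation*}
via anchoring $F_{j\pm\frac12}$ at the common vertex $u_j$ and invoking \eqref{cell_relation} matches the paper exactly, as does the plan of applying Lemma~\ref{Harten_mod} and treating the $L^2$ projection separately. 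The gap is in how you close the CFL inequality $\alpha_{j+\frac12}\lambda_j\le 1$. You bound $\alpha_{j+\frac12}\le\frac{a-b}{2}$ and $\Delta x_j^\star\ge(1-c)\Delta x$ using $|\nu_{j\pm\frac12}|\le\max(|a|,|b|)$, concluding that the scheme is TVD/MPP for ``an explicit $\lambda>0$ depending only on $a,b$.'' But the $\lambda$ in the lemma is not a free constant to be produced at the end: it is the same $\lambda$ for which the cells are assumed to be non-troubled (Definition~\ref{def} defines troubled cells \emph{for} $\lambda$, with thresholds $\frac{2}{\lambda}$), and the whole point of the lemma is that the admissible time step is tied exactly to that threshold. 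Your argument never uses the no-troubled-cell hypothesis quantitatively, and the constant it produces is both different from and generally far smaller than the required one --- e.g.\ bounding $|\nu|$ by $\max(|a|,|b|)$ rather than bounding the difference $\nu_{j+\frac12}-\nu_{j-\frac12}=\frac{u_{j+1}-u_{j-1}}{2}$ makes the admissible $\Delta t$ degrade with the absolute size of the data rather than with $a-b$.

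The missing idea is a cancellation. Writing out $\alpha_{j+\frac12}\lambda_j\le 1$ with $\alpha_{j+\frac12}=\frac{u_{j+1}-u_j}{2}$ and $\Delta x_j^\star=\Delta x+\frac{u_{j+1}+u_j}{2}\Delta t-\frac{u_j+u_{j-1}}{2}\Delta t$, the terms in $u_{j+1}$ cancel identically and the condition is \emph{equivalent} to
\begin{equation*}
\frac{u_{j-1}-u_j}{2}\,\Delta t\le\Delta x,
\end{equation*}
which holds precisely because $I_j$ is not a troubled cell (so $u_{j-1}-u_j\le\frac{2}{\lambda}$) together with $\Delta t\le\lambda\Delta x$. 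Without this step you prove only a weaker statement under a more restrictive, mismatched time-step constraint, which would not suffice for the way the lemma is used in the proof of Theorem~\ref{main}.
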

\begin{proof}
We apply \eqref{cell_relation} to \eqref{mol_3_first} to obtain
$$
\Delta x_j^\star u^{\star}_j=(\Delta x_j^\star-\nu_{j+\frac12}\Delta t+\nu_{j-\frac12}\Delta t)u_j - \Delta t\left(\hat{F}_{j+\frac12}  - \hat{F}_{j-\frac12}\right),
$$
which further yields
$$
u^\star_j=u_j+\lambda_j\left[(\hat{F}_{j-\frac12}+\nu_{j-\frac12}u_j)-(\hat{F}_{j+\frac12}+\nu_{j+\frac12}u_j)\right].
$$
where $\lambda_j=\frac{\Delta t}{\Delta x^\star_j}.$ By using \eqref{flux}, we have
$$
u^\star_j=u_j+\lambda_j\left[\left(F_{j-\frac12}-\frac12\alpha_{j-\frac12}(u_{j}-u_{j-1})+\nu_{j-\frac12}u_j\right)-\left(F_{j+\frac12}-\frac12\alpha_{j+\frac12}(u_{j+1}-u_{j})+\nu_{j+\frac12}u_j\right)\right].
$$
Thanks to \eqref{F}, we obtain
{\setlength\arraycolsep{2pt}\begin{eqnarray*}
u^\star_j&=&u_j+\lambda_j\left[\left(f(u_j)-\frac12\alpha_{j-\frac12}(u_{j}-u_{j-1})\right)-\left(f(u_j)-\frac12\alpha_{j+\frac12}(u_{j+1}-u_{j})\right)\right]\\
&=&u_j+\frac{\lambda_j}2\left[\alpha_{j+\frac12}(u_{j+1}-u_{j})-\alpha_{j-\frac12}(u_{j}-u_{j-1})\right]\\
&=&\frac{\lambda_j}2\alpha_{j+\frac12}u_{j+1}+\left(1-\frac{\lambda_j}2\alpha_{j+\frac12}-\frac{\lambda_j}2\alpha_{j-\frac12}\right)u_j+\frac{\lambda_j}2\alpha_{j-\frac12}u_{j-1}.
\end{eqnarray*}}
By Lemma \ref{Harten_mod}, to obtain TVD and MPP numerical approximations, we need
$$
\frac{\lambda_j}2\alpha_{j+\frac12}+\frac{\lambda_{j+1}}2\alpha_{j+\frac12}\leq 1,\quad \frac{\lambda_j}2\alpha_{j+\frac12}+\frac{\lambda_j}2\alpha_{j-\frac12}\leq1,
$$
and the sufficient conditions are
\begin{equation}\label{con1}
\alpha_{j+\frac12}\lambda_j\leq1,\quad \alpha_{j-\frac12}\lambda_j\leq1.
\end{equation}
We consider $\alpha_{j+\frac12}\lambda_j\leq 1$ only, and the other condition can be obtained following the same lines. Moreover, we assume $\alpha_{j+\frac12}\neq0$, i.e. $\alpha_{j+\frac12}=\frac{u_{j+1}-u_j}2$, with $u_{j+1}>u_j$. It is easy to verify that
{\setlength\arraycolsep{2pt}\begin{eqnarray*}
\alpha_{j+\frac12}\lambda_j\leq 1&\Leftrightarrow&\frac{\Delta t}{\Delta x+\nu_{j+\frac12}\Delta t-\nu_{j-\frac12}\Delta t}\leq\frac2{u_{j+1}-u_j}\\
&\Leftrightarrow&\frac{u_{j+1}-u_j}2\Delta t\leq\Delta x+\frac{u_{j+1}+u_j}2\Delta t-\frac{u_j+u_{j-1}}2\Delta t\\
&\Leftrightarrow&\frac{u_{j-1}-u_j}2\Delta t\leq \Delta x
\end{eqnarray*}}
Since $I_j$ is not a troubled cell, then $u_{j-1}-u_j\leq\frac2\lambda.$ If $u_{j-1}-u_j\leq0$, then we have $\frac{u_{j-1}-u_j}2\Delta t\leq0\leq \Delta x$. If $u_{j-1}-u_j>0$, then $\frac{u_{j-1}-u_j}2\Delta t\leq\Delta x,$ 
Finally, it is easy to see that the $L^2$ projection to the background mesh in the final step preserves the maximum-principle and does not increase the total variation for first-order schemes. 
We complete the proof.
\end{proof}

\subsection{The main theorem with merging regions around trouble cells}\label{secsimple}
In the previous subsection, we discussed the case with $m=0$. In practice, a limited time step size is necessary, since the characteristics may quickly intersect near the shocks. To obtain larger time steps, we consider merging cells by first categorizing trouble cells, then identifying ETCs, followed by defining the corresponding influence regions.

\begin{definition}\label{def}
Suppose $u_{j-1},\ u_j,\ u_{j+1}$ are the numerical approximations on $I_{j-1},\ I_j, I_{j+1},$ respectively. $I_j$ is called a troubled cell of type I for $\lambda$ if
\begin{equation}\label{tc1}
u_{j-1}>u_{j+1}+\frac2\lambda.
\end{equation}
Here $\lambda$ is associated with an upper bound for time step constraint as in $\Delta t\le \lambda \Delta x$.
In addition, $I_j$ is called a troubled cell of type II for $\lambda$ if it is not a troubled cell of type I one and satisfies
\begin{equation}\label{tc2}
u_{j-1}>u_j+\frac2\lambda, \textrm{ and } u_{j-1}\geq u_{j+1}\geq u_j.
\end{equation}
and $I_j$ is called a troubled cell of type III for $\lambda$ if it is not a troubled cell of type I one and satisfies
\begin{equation}\label{tc3}
u_j>u_{j+1}+\frac2\lambda, \textrm{ and } u_j\geq u_{j-1}\geq u_{j+1},
\end{equation}
Moreover, $I_j$ is called a troubled cell of type IV for $\lambda$ if it is not a troubled cell of type I or type II and
\begin{equation}\label{tc4}
u_{j-1}>u_j+\frac2\lambda,
\end{equation}
and $I_j$ is called a troubled cell of type V for $\lambda$ if it is not a troubled cell of type I or type III and
\begin{equation}\label{tc5}
u_j>u_{j+1}+\frac2\lambda.
\end{equation}
\end{definition}

\begin{center}
\begin{figure}[htb]
    \centering
    \includegraphics[width=2.5in]{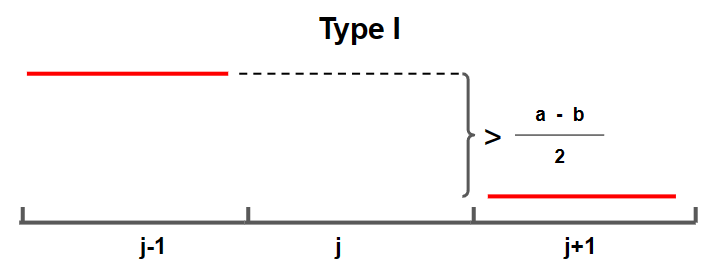} 
    \includegraphics[width=2.5in]{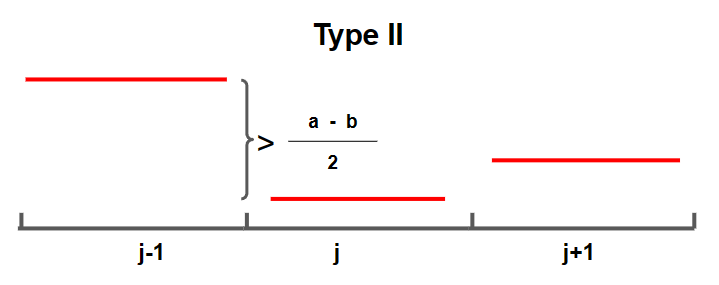}
    \includegraphics[width=2.2in]{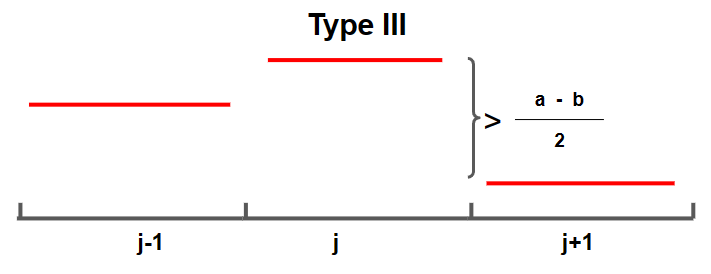} 
    \includegraphics[width=2.2in]{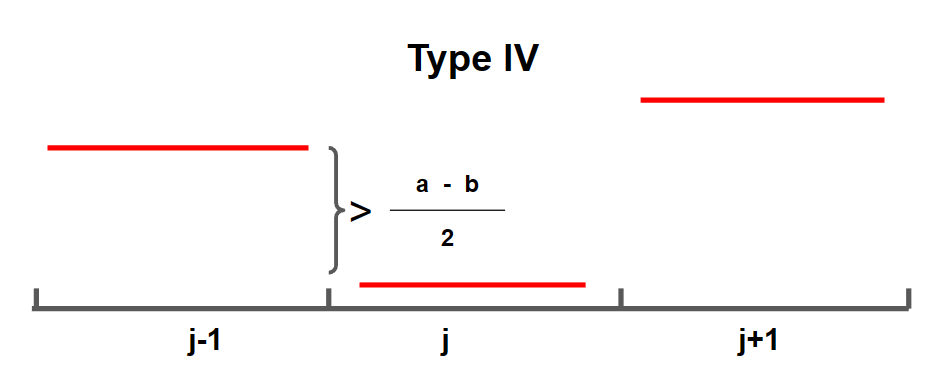}
    \includegraphics[width=2.2in]{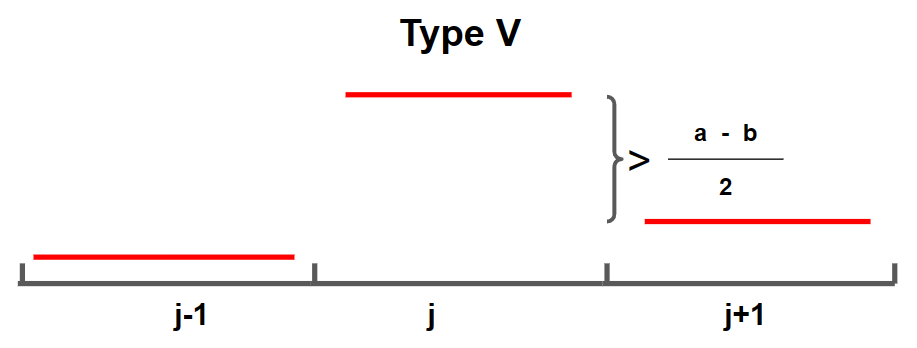}
    \caption{Diagrams of different types of trouble cells in Definition \ref{def}.}
    \label{fig:tctypes}
\end{figure}
\end{center}

In the definition of troubled cells given above, condition \eqref{tc1} is triggered if the partition lines originated from $x_{j\pm\frac12}$ intersect at $t< t^n+\lambda\Delta x$, while \eqref{tc2}, \eqref{tc3}, \eqref{tc4} and \eqref{tc5} are used for strong shock waves generated at the cell interfaces. Finally, \eqref{tc4} and \eqref{tc5}, though unlikely appear together, may be encountered in a rarefaction wave with strong numerical oscillations. It is easy to show that the above conditions to categorize troubled cells are mutually exclusive. Figure~\ref{fig:tctypes} illustrates various situations corresponding to different types of trouble cells. If two troubled cells are connected, we will design the influence region based on one of the troubled cells, and such a troubled cell is defined as an ETC. 
The ETCs are selected sequentially from left to right as follows: 
{\em 
We search the computational domain from left to right, if the first troubled cell, say $I_{j-1}$, is a troubled cell of type V, then $I_{j}$ is denoted as an ETC. Otherwise $I_{j-1}$ is defined as an ETC.
}
Once an ETC $I_j$ is identified, we define its influence region $R_j$ as below. 
\begin{definition}\label{ir}
Suppose $I_j$ is an ETC, and the numerical approximations on cell $I_i$, $i=j-3,\cdots,j+3$ are $s_\ell,$ $z_\ell$, $z_1$, $z_2$, $z_3$, $z_r$, $s_r$, respectively. Assume the initial condition is bounded by $a\geq u_0\geq b$, then the influence region is defined as follows
\begin{enumerate}
\item [I1:] \label{ir0} If $I_j$ is a troubled cell of type IV, then $I_{j-1}$ is a troubled cell of type V, and the influence region contains $I_i$, $i=j-2,\cdots,j+1$.
\item [I2:] \label{ir1} Assume $A=z_1+z_2+z_3> \frac{7a+5b}4$, if $z_r< \frac{a+3b}4$ or $s_r+z_r< \frac{a+3b}2$, then the influence region contains $I_i$, $i=j-2\cdots,j+3$.
\item [I3:] \label{ir2} Assume $A=z_1+z_2+z_3< \frac{5a+7b}4$, if $z_\ell> \frac{3a+b}4$ or $s_\ell+z_\ell> \frac{3a+b}2$, then the influence region contains $I_i$, $i=j-3\cdots,j+2$.
\item [I4:] \label{ir3} In all other cases, the influence region contains $I_i$, $i=j-2,\cdots,j+2.$
\end{enumerate}
We say the ETC $I_j$ is isolated if $R_j$ does not overlap with influence regions of any other ETCs. 
\end{definition}

Now, we can state the main theorem.
\begin{theorem}\label{main}
Consider the hyperbolic conservation law \eqref{scalar1d} with $f(u)=\frac{u^2}2$, initial condition $u_0(x)$. Suppose the solution to problem \eqref{scalar1d} has compact support or satisfies periodic boundary conditions, $a\geq u_0(x)\geq b$, and the time step size satisfies
\begin{equation}\label{time}
\Delta t<\lambda\Delta x\quad\textrm{with}\quad\lambda =\frac{4}{a-b}.
\end{equation}
If the influence region of each ETC is given in Definition \ref{ir} and all the ETCs are isolated, then the numerical approximations obtained from the first-order EL FV scheme \eqref{mol_3_first} is TVD and MPP.
\end{theorem}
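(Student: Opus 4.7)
The plan is to reduce the analysis to the already-handled case $m=0$ of Lemma \ref{modify_Harten} by constructing, purely for the sake of analysis, a reassignment $\{\tilde u_i\}$ of the cell averages on each isolated influence region $R_j$ that satisfies the four principles (a)-(d) stated in the introduction. Given such a reassignment, the standard first-order EL FV scheme applied to $\{\tilde u_i\}$ without any merging falls under the hypotheses of Lemma \ref{modify_Harten} (its partition lines do not intersect within $\Delta t$), and hence produces TVD and MPP updates $\{\tilde u_i^{n+1}\}$. Because the reassignment fixes the two outermost cell averages of $R_j$ and preserves the total mass inside $R_j$, the boundary fluxes $\hat F_{j\pm 5/2}$ agree between the merged and unmerged schemes, so the merged-cell average produced by the actual algorithm equals the $R_j$-average of $\{\tilde u_i^{n+1}\}$. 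A short comparison then transfers the TVD and MPP estimates to the merged scheme, and the final $L^2$ projection onto the uniform background mesh preserves both properties for piecewise-constant data.

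The core of the argument is thus to exhibit an explicit reassignment in each of the four sub-cases I1-I4 of Definition \ref{ir}. In the symmetric five-cell case I4, I plan to replace the triple $(z_1,z_2,z_3)$ by a monotone profile connecting $z_\ell$ and $z_r$ that conserves the mass $A = z_1+z_2+z_3$ and forces every consecutive difference and every two-step difference in the chain $z_\ell,\tilde z_1,\tilde z_2,\tilde z_3,z_r$ to be bounded by $2/\lambda = (a-b)/2$; the first bound is the Harten-type TVD condition appearing inside the proof of Lemma \ref{modify_Harten}, while the second is the Rankine-Hugoniot non-intersection condition, and together they force the chosen CFL number $\lambda = 4/(a-b)$. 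In the asymmetric cases I2 and I3 the mass $A$ is too extreme for a five-cell monotone profile to fit inside $[b,a]$, so the sixth cell $s_r$ or $s_\ell$ included in the influence region acts as a buffer; the thresholds $(7a+5b)/4$, $(5a+7b)/4$, $(3a+b)/4$, $(a+3b)/4$, $(3a+b)/2$, $(a+3b)/2$ in Definition \ref{ir} are calibrated precisely so that a six-cell monotone reassignment satisfying (a)-(d) is feasible. The rarefaction-type case I1, where $I_{j-1}$ and $I_j$ are adjacent troubled cells of types V and IV, involves only four interior cells and the reassignment is immediate.

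Verification of (a)-(d) for each construction is largely algebraic: (a) is built into the redistribution, (c) follows from clipping to $[b,a]$, (d) reduces to the two pairwise inequalities above, and (b) follows from a telescoping estimate, since the reassigned interior profile is monotone between the unchanged boundary values $z_\ell,z_r$ and contained in their convex hull, so its total variation over $R_j$ collapses to $|z_\ell-z_r|$, which is dominated by the original total variation on $R_j$. The main obstacle I expect is the bookkeeping in the asymmetric cases I2 and I3: pushing mass into the sixth buffer cell must simultaneously avoid violating MPP at the buffer, avoid creating a new partition-line intersection at the new outer interface $x_{j\pm 5/2}$, and not enlarge the total variation at that interface. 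Showing that the numerical thresholds encoded in Definition \ref{ir} are sharp enough to guarantee feasibility under every admissible configuration — and in particular that the case splits in Definition \ref{def} and Definition \ref{ir} cover all situations compatible with $b\le u_0\le a$ — is the principal computational step of the proof.
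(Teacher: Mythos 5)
Your overall architecture matches the paper's: reassign the interior values of each isolated influence region purely for analysis, subject to the principles (a)--(d), reduce to the no-trouble-cell situation of Lemma \ref{modify_Harten}, observe that conservation plus unchanged boundary fluxes make the merged average equal the average of the reassigned-and-updated profile, and finish with the $L^2$ projection. The gap is in the concrete construction you propose for the reassignment. You claim the interior triple can be replaced by a monotone profile \emph{contained in the convex hull of $z_\ell$ and $z_r$}, so that its total variation collapses to $|z_\ell-z_r|$. This is impossible in general: mass conservation forces $r_1+r_2+r_3=A=z_1+z_2+z_3$, and $A$ need not lie in $[\,3\min(z_\ell,z_r),\,3\max(z_\ell,z_r)\,]$. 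For instance $z_\ell=\frac{a+b}{2}$, $z_1=z_2=a$, $z_3$ near $\frac{a+b}{2}$, $z_r=b$ gives $A$ near $\frac{5a+b}{2}$, while any profile inside $[b,\frac{a+b}{2}]$ carries mass at most $\frac{3a+3b}{2}<\frac{5a+b}{2}$. Consequently the reassigned profile must in general overshoot $\max(z_\ell,z_r)$ (or undershoot $\min$), its total variation cannot be reduced to $|z_\ell-z_r|$, and the "telescoping" verification of principle (b) fails exactly in the shock configurations the theorem is about.

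What replaces this in the paper is the principal technical content of the proof: Theorem \ref{thm_mintv} minimizes the total variation over the admissible set $G=\{a\geq z_1\geq z_2\geq z_3\geq b,\ z_1\geq z_3+\frac{2}{\lambda}\}$ through the ten cases of Table \ref{mintv}, where the minimal value is typically $\frac{4}{\lambda}+z_r-z_\ell$ or $2z_1-z_\ell-z_r$ rather than $|z_\ell-z_r|$, and the whole point is to show this still does not exceed the \emph{original} total variation; Theorems \ref{thm_redefine_special}--\ref{thm_redefine} then perform a second adjustment so that the two-step differences required by Corollary \ref{time_useful} hold and the only residual troubled cells sit on the boundary of the influence region, contradicting isolation. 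Relatedly, your explanation of why cases I2/I3 need a sixth cell ("$A$ too extreme to fit inside $[b,a]$") is not the actual mechanism: a three-cell profile of mass $A\le\frac{5a+b}{2}$ always fits in $[b,a]$; the sixth cell is needed because a small $z_r$ (or large $z_\ell$) would otherwise leave a new troubled cell of type I or V at the edge of a five-cell region, violating (d) there. So the plan identifies the right skeleton but the step carrying the TVD estimate, as stated, would fail.
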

\begin{remark}
In the above theorem, we can choose $a$ and $b$ to be the local maximum and minimum in the influence region of the ETC and the two neighboring cells of the influence region. For example, if the influence region of the ETC $I_i$ contains $I_{i-2}$, $I_{i-1}$, $I_i$, $I_{i+1}$, $I_{i+2}$, then we take
$$
a_i=\max_{j=0,\pm1,\pm2,\pm3}\{u_{i+j}\},\quad b_i=\min_{j=0,\pm1,\pm2,\pm3}\{u_{i+j}\},
$$
and the conclusion is also correct under the condition
$$
\Delta t<\lambda\Delta x\quad\textrm{with}\quad\lambda=\frac{4}{\max_i\{a_i-b_i\}}.
$$
The proof is straightforward since if we merge 5 cells and the information to be used contains the cells in the influence region and the two neighboring cells. This can also be observed from the proof in Section \ref{secm2}. The time stepping constraint in \eqref{time} is at least twice as large as the standard CFL condition.
\end{remark}

\subsection{Proof of Theorem \ref{main}}

\subsubsection{The basic ideas and a simple case}\label{secidea}
In this subsection, we proceed to discuss the case when we merge four to six cells according to the Definition \ref{ir} for the influence region.
Most cases, we will merge five cells and suppose the numerical approximations in those five cells are $z_\ell$, $z_1$, $z_2$, $z_3$, $z_r$, respectively from left to right. Then after the merge, the only information to be used for time evolution is the average of these five values $\frac{z_\ell+z_1+z_2+z_3+z_r}5$. Therefore, we can redefine those five values according to the criteria (a)-(d) outlined in the introduction, such that no troubled cells exist in the influence region, keeping the original average value. Obviously, we cannot modify the characteristic speed at the boundary of the influence region, i.e. the values of $z_\ell$ or $z_r$, since we cannot modify the numerical fluxes at those two boundaries. Therefore, we will adjust $z_1$, $z_2$, $z_3$ to $r_1$, $r_2$, $r_3$, keeping their average, preserving the maximum-principle and reducing the original total variation. We will show adjustment strategies in finding $r_1$, $r_2$, $r_3$, so that no more troubled cells exist. From there, Lemma \ref{modify_Harten} can be applied to prove the TVD property. Finally, we will briefly discuss what if the ETCs are not isolated.


We first show a simple case to find $\{r_1, r_2, r_3\}$ when the ETC is a trouble cell of type IV in Theorem \ref{firstcase}.
\begin{theorem}\label{firstcase}
Suppose $I_j$ is a troubled cell of type IV and it is an ETC, then $I_{j-1}$ is a troubled cell of type V. Assume the numerical approximations on $I_i$, $i=j-2,\cdots,j+1$ are $z_1$, $z_2$, $z_3$, $z_4$, respectively, and $b\leq z_i\leq a$. Then $b\leq z_1\leq z_3\leq z_2\leq z_4\leq a.$ We can define $r_i=z_i$, $i=1,4$, and $r_2=z_3$, $r_3=z_2$. With the newly defined $\{r_i\}_{i=1}^4$,
we have 
$$b\leq r_1\leq r_2\leq\frac{a+b}2\leq r_3\leq r_4\leq a, \quad TV(z_1,z_2,z_3,z_4)\geq TV(r_1,r_2,r_3,r_4).$$ 
$I_{j-1}$ and $I_j$ are no longer troubled cells. If $I_{j+1}$ is a troubled cell and the numerical approximation on $I_{j+2}$ is $z_r$, then $z_4>z_r+\frac2\lambda$; moreover, it is not a troubled cell of type V. If $I_{j-2}$ is a troubled cell and the numerical approximation on $I_{j-3}$ is $z_\ell$, then $z_\ell>z_1+\frac2\lambda$; moreover it is not a troubled cell of type IV.
\end{theorem}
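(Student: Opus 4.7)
The plan is to first extract a strict ordering $z_1<z_3<z_2<z_4$ from the type-IV/type-V definitions, then use $\lambda=4/(a-b)$ to pin down where $z_2,z_3$ sit relative to the midpoint $(a+b)/2$, verify that the swap $r_2=z_3,\ r_3=z_2$ preserves the mean while strictly reducing TV, and finally dispatch the status claims for $I_{j+1}$ and $I_{j-2}$ by definition-chasing combined with the isolation hypothesis from Theorem~\ref{main}.

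For the ordering, the ETC selection rule immediately forces $I_{j-1}$ to be Type V (only a Type V left neighbor can promote a non-Type-V cell like $I_j$ to ETC status). Unpacking Definition~\ref{def}: from ``$I_j$ is Type IV, not Type I, not Type II'' I extract $z_2>z_3+2/\lambda$, $z_2\leq z_4+2/\lambda$, and either $z_4>z_2$ or $z_4<z_3$; the case $z_4<z_3$ combined with $z_2>z_3+2/\lambda$ would give $z_2>z_4+2/\lambda$, i.e.\ Type I for $I_j$, a contradiction, so $z_4>z_2$. Symmetrically, from ``$I_{j-1}$ is Type V, not Type I, not Type III'' I get $z_1\leq z_3+2/\lambda$ and either $z_1>z_2$ or $z_1<z_3$; the first alternative forces $z_1>z_3+2/\lambda$, contradicting not-Type-I, so $z_1<z_3$. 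This yields $z_1<z_3<z_2<z_4$.

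For the location of the midpoint and the TV comparison, with $\lambda=4/(a-b)$ the gap $z_2>z_3+(a-b)/2$ combined with $z_2\leq a$ yields $z_3<(a+b)/2$, and combined with $z_3\geq b$ yields $z_2>(a+b)/2$, establishing the claimed bounds on $r_i$. Because $\{r_i\}$ is strictly monotone, $TV(r)=z_4-z_1$, whereas the zig-zag order $z_1<z_3<z_2<z_4$ gives $TV(z)=(z_2-z_1)+(z_2-z_3)+(z_4-z_3)=2(z_2-z_3)+(z_4-z_1)\geq TV(r)$. Checking that $I_{j-1}$ and $I_j$ are no longer troubled under $\{r_i\}$ is immediate: each clause of Definition~\ref{def} demands a strict directional excess ``$>\!+2/\lambda$'' in one direction, which the monotonic profile $r_1<r_2<r_3<r_4$ kills on inspection.

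For the peripheral cells, if $I_{j+1}$ (value $z_4$, right neighbor $z_r$) is troubled then $z_3<z_4$ eliminates Types II and IV (which demand $z_3>z_4+2/\lambda$), while $z_3<(a+b)/2$ together with $z_r\geq b$ gives $z_3-z_r<2/\lambda$, eliminating Type I; thus $I_{j+1}$ is Type III or Type V, both of which require $z_4>z_r+2/\lambda$. To exclude Type V, I invoke the isolation hypothesis: a Type-V $I_{j+1}$ would promote $I_{j+2}$ to an ETC, whose influence region (in any case of Definition~\ref{ir}) contains $I_{j+1}$, overlapping the influence region $\{I_{j-2},\dots,I_{j+1}\}$ of $I_j$. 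The $I_{j-2}$ case is the left-right mirror: the ordering and $z_2>(a+b)/2$ eliminate Types I, III, V, leaving Type II or IV and hence $z_\ell>z_1+2/\lambda$; Type IV for $I_{j-2}$ would make $I_{j-2}$ itself an ETC whose influence region overlaps that of $I_j$, again contradicting isolation. The main obstacle is the careful bookkeeping in this last step --- distinguishing which type exclusions follow purely from the ordering and midpoint location, and which genuinely require the global isolation assumption; everything else reduces to direct inequality manipulation from Definition~\ref{def}.
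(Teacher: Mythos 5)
Your derivation of the ordering $z_1\leq z_3\leq z_2\leq z_4$, the placement of $z_3\leq\frac{a+b}2\leq z_2$, the total-variation computation $TV(z)=2(z_2-z_3)+(z_4-z_1)\geq z_4-z_1=TV(r)$, and the check that the monotone profile $r_1\leq r_2\leq r_3\leq r_4$ leaves $I_{j-1}$ and $I_j$ untroubled all match the paper's (very terse) argument and are correct. The problem is in the final step, where you exclude type V for $I_{j+1}$ (and type IV for $I_{j-2}$) by invoking the isolation hypothesis of Theorem \ref{main}. That hypothesis is not part of the statement of this theorem, and the remark immediately following it says explicitly that these boundary-cell classifications are there ``for discussing the case of non-isolated ETCs'' --- i.e.\ they are used in Section 3.4 precisely when isolation fails. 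An argument that assumes isolation therefore does not prove the statement and would make it unusable for its intended purpose. The exclusion must be, and can be, purely local: if $I_{j+1}$ were type V it would not be type III, and since $r_4=z_4\geq r_3$ the only way the type-III ordering $u_{j+1}\geq u_j\geq u_{j+2}$ can fail is $r_3<z_r$; then $\frac{a+b}2\leq r_3\leq z_r<z_4-\frac2\lambda\leq a-\frac{a-b}2=\frac{a+b}2$, a contradiction. The mirror computation ($z_\ell<r_2\leq\frac{a+b}2$ versus $z_\ell>r_1+\frac2\lambda\geq\frac{a+b}2$) kills type IV for $I_{j-2}$. This is the paper's argument and is the missing idea in your write-up.

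Two smaller slips in the same passage: you classify $I_{j+1}$ using $z_3$ as the value on its left neighbor, but after the reassignment that value is $r_3=z_2>\frac{a+b}2$, so type I for $I_{j+1}$ is \emph{not} eliminated (the paper lists I, III, V as the possible types and handles type I separately via $z_4\geq r_3>z_r+\frac2\lambda$); symmetrically, type I for $I_{j-2}$ is not eliminated by the ordering alone. Neither slip affects the conclusion $z_4>z_r+\frac2\lambda$ (resp.\ $z_\ell>z_1+\frac2\lambda$), since type I also implies it, but the claimed list of surviving types is wrong as written.
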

\begin{proof}
The proof follows from definition \ref{def} and is straightforward. The last two statements on trouble cells can be obtained following the same line; we will only prove the former statement for simplicity. 
Clearly, $I_{j+1}$ can only be a troubled cell of type I, III and V. If it is a troubled cell of type III and V, then $z_4>z_r+\frac2\lambda$ is automatically true. Otherwise, we have $r_4\geq r_3>z_r+\frac2\lambda$. Finally, if $I_{j+1}$ is a troubled cell of type V, then $\frac{a+b}2\leq r_3\leq z_r< z_4-\frac2\lambda\leq\frac{a+b}2$, which is a contradiction. 
\end{proof}
\begin{remark}
In the above theorem, the properties of the potential troubled cells on the boundaries of the influence region are for discussing the case of non-isolated ETCs.
\end{remark}

\subsubsection{Adjustment from $z_1$, $z_2$, $z_3$ to $\tilde{z}_1$, $\tilde{z}_2$, $\tilde{z}_3$ with TVD property}
\label{secm2}

When an ETC is not a troubled cell of type IV, the design and analysis of the adjustment strategy is rather involved. We will first identify some special values of $\{\tilde{z}_1, \tilde{z}_2, \tilde{z}_3\}$ provided in Table~\ref{mintv} to simplify the discussion of adjustment strategy for $\{r_1, r_2, r_3\}$ in the next subsection.

Suppose $I_j$ is an ETC for $\lambda$, the numerical approximations in the five merged cells and the two neighboring cells are $s_\ell$, $z_\ell$, $z_1$, $z_2$, $z_3$, $z_r$, $s_r$, respectively from left to right. Moreover, we denote the cells to be $I_{j-3},$ $\cdots,$ $I_{j+3}$ from left to right. 
The following lemma is used to find $r$'s with the minimum total variation.
\begin{lemma}
\label{lemma_r3}
Suppose $\max\{z_1,z_2\}\geq\max\{z_2,z_3\}$ and $r_1=\max\{z_1,z_2,z_3\}$, $r_3=\min\{z_1,z_2,z_3\}$, $r_2=z_1+z_2+z_3-r_1-r_3$, then $TV(z_\ell,r_1,r_2,r_3,z_r)\leq TV(z_\ell,z_1,z_2,z_3,z_r)$.
\end{lemma}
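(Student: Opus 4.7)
The plan is a direct case analysis on the ordering of $(z_1, z_2, z_3)$, guided by the observation that the hypothesis $\max\{z_1, z_2\} \geq \max\{z_2, z_3\}$ is equivalent to saying that $z_3$ is not the strict maximum of $\{z_1,z_2,z_3\}$; hence $r_1 \in \{z_1, z_2\}$. By construction, $(r_1, r_2, r_3)$ is the non-increasing rearrangement of $(z_1, z_2, z_3)$, so the interior contribution of the new sequence telescopes to $(r_1-r_2)+(r_2-r_3) = r_1 - r_3 = \max_i z_i - \min_i z_i$, and this is dominated by $|z_1 - z_2| + |z_2 - z_3|$ in general. Thus the interior total variation is automatically non-increasing, and the remaining task is to control the boundary contributions $|z_\ell - r_1| + |r_3 - z_r|$ against $|z_\ell - z_1| + |z_3 - z_r|$.

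I would split into two main cases. In Case~1 ($z_1$ is the maximum, so $r_1 = z_1$), the left boundary term is automatically unchanged. Sub-dividing on whether $z_2 \geq z_3$ yields either no rearrangement at all (and equality of the two TVs) or a single swap of $z_2$ and $z_3$. For the swap, direct computation reduces the claim to the triangle inequality $|z_2 - z_r| \leq |z_3 - z_r| + (z_3 - z_2)$, which immediately gives the result. In Case~2 ($z_2$ is the maximum, with $z_2 \geq z_3$ forced by the hypothesis, so $r_1 = z_2$), I would further split on whether $z_1 \geq z_3$ or $z_3 > z_1$. In each sub-case I expand $TV(z_\ell, r_1, r_2, r_3, z_r) - TV(z_\ell, z_1, z_2, z_3, z_r)$ explicitly and dominate the boundary changes using the standard estimates
\[
|z_\ell - z_2| - |z_\ell - z_1| \leq z_2 - z_1, \qquad |r_3 - z_r| - |z_3 - z_r| \leq z_3 - r_3,
\]
combined with the interior reduction $(|z_1-z_2|+|z_2-z_3|) - (r_1-r_3)$.

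The most delicate sub-case is Case~2 with $z_2 \geq z_3 > z_1$: here the maximum is moved to the left end while the minimum is pushed to the right end, so both boundary contributions can increase simultaneously. A naive per-side triangle-inequality estimate does not suffice, since the worst case leaves a residual of order $2(z_3 - z_1)$. I expect this to be the main obstacle, and I would resolve it by partitioning further according to the position of $z_\ell$ relative to the interval $[z_1, z_2]$ and the position of $z_r$ relative to $[z_1, z_3]$, verifying non-positivity of the total change by direct computation in each of the resulting micro-cases. The structural information that $I_j$ is an ETC, together with the shock-type orderings of the surrounding cells inherited from Definition~\ref{def}, is expected to eliminate the configurations in which the residual would otherwise be positive.
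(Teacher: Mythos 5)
Your overall decomposition is sound and your Cases 1 and 2(a) ($z_1$ maximal; $z_2$ maximal with $z_1\geq z_3$) reduce to the same triangle-inequality arguments the paper uses. But the sub-case you flag as delicate --- $z_2$ maximal with $z_3>z_1$, so that $r_1=z_2$ and $r_3=z_1$ --- is left genuinely open, and the resolution you sketch cannot work. The inequality is simply \emph{false} in that configuration for general $z_\ell,z_r$: take $z_\ell=z_1=0$, $z_2=10$, $z_3=z_r=5$. Then $TV(z_\ell,z_1,z_2,z_3,z_r)=0+10+5+0=15$, while $(r_1,r_2,r_3)=(10,5,0)$ gives $TV(z_\ell,r_1,r_2,r_3,z_r)=10+5+5+5=25$. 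So no amount of micro-casing on the positions of $z_\ell$ and $z_r$ will verify non-positivity of the change; some configurations are honestly positive, and the "shock-type orderings of the surrounding cells" do not forbid them.

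What saves the lemma --- and what the paper's proof does --- is that this sub-case is \emph{vacuous}: the statement is implicitly a statement about an ETC $I_j$ with values $z_1,z_2,z_3$ on $I_{j-1},I_j,I_{j+1}$, and by Definition \ref{def} an ETC relevant here is a troubled cell of type I, II or III, each of which forces $u_{j-1}\geq u_{j+1}$, i.e.\ $z_1\geq z_3$. Hence when $z_2$ is the maximum, the minimum is necessarily $z_3$ (never $z_1$), $r_2=z_1$, and $z_2\geq z_1\geq z_3$, so the new sequence is monotone in the interior and the claim again follows from a single triangle inequality at the left boundary. You correctly guessed that the ETC structure must enter, but the missing idea is that it enters by eliminating the interior ordering $z_3>z_1$ outright, not by constraining $z_\ell$ and $z_r$; without importing that structural fact, the lemma as literally stated is false and your plan cannot be completed.
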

\begin{proof}
It is easy to see that $r_1\geq r_2\geq r_3$. There are several cases to be discussed.
\begin{enumerate}
\item $r_1=z_1$, then
$$
TV(z_\ell,r_1,r_2,r_3,z_r)=TV(z_\ell,z_1,r_3,z_r)\leq\max\{TV(z_\ell,z_1,z_2,z_r),TV(z_\ell,z_1,z_3,z_r)\}\leq TV(z_\ell,z_1,z_2,z_3,z_r)
$$
\item $r_1=z_2$. Since $I_j$ is an ETC, then by Definition \ref{def}, $I_j$ can be a troubled cell of type I, II or III. Therefore, $r_3\neq z_1$, leading to $r_3=z_3$, $r_2=z_1$ and $z_2\geq z_1\geq z_3$. Then we have
$$
TV(z_\ell,r_1,r_2,r_3,z_r)=TV(z_\ell,z_2,z_1,z_3,z_r)=TV(z_\ell,z_2,z_3,z_r)\leq TV(z_\ell,z_1,z_2,z_3,z_r).
$$
\end{enumerate}
\end{proof}
Since $I_j$ is an ETC, we will merge at least five cells and redefine the values of $z_1$, $z_2$ and $z_3$. Based Lemma \ref{lemma_r3}, we assume
$$
z_1\geq z_2\geq z_3,\quad z_1\geq z_3+\frac2\lambda.
$$
We first design the admissible set as
$$
G=\left\{(z_1,z_2,z_3):a\geq z_1\geq z_2\geq z_3\geq b,\ z_1\geq z_3+\frac2\lambda\right\}.
$$
The following lemmas will be used to find the possible upper and lower bounds of $z_1$ and $z_3$.
\begin{lemma}\label{compare0}
Suppose $(z_1,z_2,z_3)\in G$ and $\lambda=\frac4{a-b}$, then we have
$$
z_1\geq\frac{a+b}2,\quad z_3\leq\frac{a+b}2,\quad \frac{a+5b}2\leq A=z_1+z_2+z_3\leq \frac{5a+b}2.
$$
\end{lemma}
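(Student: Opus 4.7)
The plan is to unwind the gap hypothesis $z_1 \geq z_3 + \frac{2}{\lambda}$ in the admissible set $G$, using the specific choice $\lambda = \frac{4}{a-b}$. First I would rewrite the gap condition as $z_1 - z_3 \geq \frac{a-b}{2}$, so the gap between the largest and smallest of the three values is at least half the amplitude of the initial data envelope.

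Next, the two pointwise bounds follow immediately from combining this gap with the global bounds $b \leq z_3$ and $z_1 \leq a$. Specifically, $z_1 \geq z_3 + \frac{a-b}{2} \geq b + \frac{a-b}{2} = \frac{a+b}{2}$, and symmetrically $z_3 \leq z_1 - \frac{a-b}{2} \leq a - \frac{a-b}{2} = \frac{a+b}{2}$. So $z_1$ lies in the upper half $[\frac{a+b}{2},a]$ and $z_3$ lies in the lower half $[b,\frac{a+b}{2}]$ of the admissible range.

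Finally, the bounds on the sum $A = z_1+z_2+z_3$ drop out by adding the best-case bounds on each coordinate together with $b \leq z_2 \leq a$. For the upper bound, $z_1 \leq a$, $z_2 \leq a$, and (from the previous paragraph) $z_3 \leq \frac{a+b}{2}$ give $A \leq 2a + \frac{a+b}{2} = \frac{5a+b}{2}$. For the lower bound, $z_3 \geq b$, $z_2 \geq b$, and $z_1 \geq \frac{a+b}{2}$ give $A \geq 2b + \frac{a+b}{2} = \frac{a+5b}{2}$.

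There is no real obstacle here: the entire lemma is a direct consequence of chasing the inequality $\frac{2}{\lambda}=\frac{a-b}{2}$ through the definition of $G$. The statement is essentially a sanity check that justifies treating $z_1$ as an ``upper'' value and $z_3$ as a ``lower'' value in the subsequent case analysis, and pins down the total mass $A$ to a window of width $2(a-b)$ centered at $\frac{3(a+b)}{2}$, which I expect to be used heavily in the adjustment arguments that construct $\{r_1,r_2,r_3\}$ on a case-by-case basis.
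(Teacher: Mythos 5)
Your proof is correct and follows exactly the same route as the paper's: derive $z_1\geq\frac{a+b}2$ and $z_3\leq\frac{a+b}2$ from the gap condition $z_1\geq z_3+\frac2\lambda=z_3+\frac{a-b}2$ combined with $b\leq z_3$ and $z_1\leq a$, then bound $A$ by summing these pointwise bounds with $b\leq z_2\leq a$. Nothing is missing.
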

\begin{proof}
The proof follows from direct computation. Actually,
$$
z_1\geq z_3+\frac2\lambda\geq b+\frac{a-b}2=\frac{a+b}2.
$$
Similarly,
$$
z_3\leq z_1-\frac2\lambda\leq a-\frac{a-b}2=\frac{a+b}2.
$$
Finally,
$$
A=z_1+z_2+z_3\leq a+a+\frac{a+b}2=\frac{5a+b}2,
$$
and
$$
A=z_1+z_2+z_3\geq \frac{a+b}2+b+b=\frac{a+5b}2.
$$
\end{proof}
\begin{lemma}\label{ineq}
Suppose $\frac{a+5b}2\leq A\leq \frac{5a+b}2$ and $\lambda=\frac4{a-b}$, then $$
\frac13A+\frac2{3\lambda}\leq a,\quad \frac13A-\frac2{3\lambda}\geq b,\quad \frac13A-\frac4{3\lambda}\leq \frac{a+b}2=a-\frac2\lambda,\quad A-\frac{3a+b}2\leq a,\quad A-\frac{a+3b}2\geq b.
$$
\end{lemma}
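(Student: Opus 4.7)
The lemma is a collection of five elementary inequalities that all follow from the same observation: once we substitute the explicit value $\lambda=\tfrac{4}{a-b}$, each bound reduces to a linear comparison between $A$ and one of the endpoints $\tfrac{a+5b}{2}$ or $\tfrac{5a+b}{2}$, and in each case the purported bound is actually saturated at the relevant endpoint. So my plan is simply to normalize the $1/\lambda$ terms and then verify each inequality by direct substitution, with no case analysis needed.

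The first step is to record the identities
\begin{equation*}
\frac{1}{\lambda}=\frac{a-b}{4},\qquad \frac{2}{\lambda}=\frac{a-b}{2},\qquad \frac{2}{3\lambda}=\frac{a-b}{6},\qquad \frac{4}{3\lambda}=\frac{a-b}{3},
\end{equation*}
which come straight from the hypothesis $\lambda=4/(a-b)$. In particular, $a-\tfrac{2}{\lambda}=\tfrac{a+b}{2}$, which immediately disposes of the equality part of the third inequality.

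Next I would handle the five bounds in order, each by plugging the worst-case endpoint of $A$ into the left-hand side. For the first, using $A\le\tfrac{5a+b}{2}$ gives $\tfrac{1}{3}A+\tfrac{2}{3\lambda}\le \tfrac{5a+b}{6}+\tfrac{a-b}{6}=a$. For the second, using $A\ge\tfrac{a+5b}{2}$ gives $\tfrac{1}{3}A-\tfrac{2}{3\lambda}\ge \tfrac{a+5b}{6}-\tfrac{a-b}{6}=b$. For the third, $A\le\tfrac{5a+b}{2}$ yields $\tfrac{1}{3}A-\tfrac{4}{3\lambda}\le\tfrac{5a+b}{6}-\tfrac{2(a-b)}{6}=\tfrac{a+b}{2}$. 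The last two are even more immediate: $A-\tfrac{3a+b}{2}\le\tfrac{5a+b}{2}-\tfrac{3a+b}{2}=a$ and $A-\tfrac{a+3b}{2}\ge\tfrac{a+5b}{2}-\tfrac{a+3b}{2}=b$.

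There is essentially no obstacle here; the only thing to watch is that each inequality is tight precisely at one of the two endpoints of the interval for $A$, so the direction of the bound on $A$ must be chosen correctly for each line. The lemma is a bookkeeping step whose role is to certify that several quantities encountered later, obtained by averaging or subtracting from $A$, remain in the admissible interval $[b,a]$ and in particular respect the crossover value $\tfrac{a+b}{2}$; the proof itself is just these five one-line verifications.
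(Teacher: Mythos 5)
Your proof is correct and is exactly the direct computation that the paper's proof alludes to (the paper simply states ``the proof follows from direct computation, so we skip it''). All five endpoint substitutions check out, so nothing further is needed.
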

\begin{proof}
The proof follows from direct computation, so we skip it.
\end{proof}

To ensure that the total variation does not increase, we aim to minimize $TV(z_\ell,z_1,z_2,z_3,z_r)$ in the admissible set $G$. Define $A=z_1+z_2+z_3$, we can choose $z_1$, $z_2$, $z_3$ in $G$ to achieve the minimum total variation $TV(z_\ell,z_1,z_2,z_3,z_r)$ following Table \ref{mintv}, where the conditions cover all the possibilities, yet may not be mutually exclusive, since we do not have the relationship between $z_\ell$ and $z_r$. The result is given in the following theorem.
\begin{table}[!htbp]
    \centering
    \begin{tabular}{|c|c|c|c|c|}\hline
Case&Conditions               &  $z_1$                    & $z_2$ & $z_3$\\\hline&&&&\\
\multirow{2}{*}{1}&$A\geq3z_\ell-\frac2\lambda$&\multirow{2}{*}{$\tilde{z}_1=z_r+\frac2\lambda$}&\multirow{2}{*}{$\tilde{z}_2=A-2z_r-\frac2\lambda$}&\multirow{2}{*}{$\tilde{z}_3=z_r$}\\
&$z_3\geq z_r$, $\frac13A-\frac4{3\lambda}\leq z_r$&&&\\[3ex]\hline&&&&\\
\multirow{2}{*}{2}&$A\geq3z_\ell-\frac2\lambda$&\multirow{2}{*}{$\tilde{z}_1=\frac{a+b}2$}&\multirow{2}{*}{$\tilde{z}_2=A-\frac{a+3b}2$}&\multirow{2}{*}{$\tilde{z}_3=b$}\\
&$z_3\leq z_r$, $A\leq a+2b$&&&\\[3ex]\hline&&&&\\
\multirow{2}{*}{3}&$A\geq3z_\ell-\frac2\lambda$&\multirow{2}{*}{$\tilde{z}_1=\frac13A+\frac2{3\lambda}$}&\multirow{2}{*}{$\tilde{z}_2=\frac13A+\frac2{3\lambda}$}&\multirow{2}{*}{$\tilde{z}_3=\frac13A-\frac4{3\lambda}$}\\
&all other possibilities&&&\\[3ex]\hhline{|=|=|=|=|=|}&&&&\\
\multirow{2}{*}{4}&$A\leq3z_r+\frac2\lambda$&\multirow{2}{*}{$\tilde{z}_1=z_\ell$}&\multirow{2}{*}{$\tilde{z}_2=A-2z_\ell+\frac2\lambda$}&\multirow{2}{*}{$\tilde{z}_3=z_\ell-\frac2\lambda$}\\
&$z_1\leq z_\ell$, $\frac13A+\frac4{3\lambda}\geq z_\ell$&&&\\[4ex]\hline&&&&\\
\multirow{2}{*}{5}&$A\leq3z_r+\frac2\lambda$&\multirow{2}{*}{$\tilde{z}_1=a$}&\multirow{2}{*}{$\tilde{z}_2=A-\frac{3a+b}2$}&\multirow{2}{*}{$\tilde{z}_3=\frac{a+b}2$}\\
&$z_1\geq z_\ell$, $A\geq 2a+b$&&&\\[3ex]\hline&&&&\\
\multirow{2}{*}{6}&$A\leq3z_r+\frac2\lambda$&\multirow{2}{*}{$\tilde{z}_1=\frac13A+\frac4{3\lambda}$}&\multirow{2}{*}{$\tilde{z}_2=\frac13A-\frac2{3\lambda}$}&\multirow{2}{*}{$\tilde{z}_3=\frac13A-\frac2{3\lambda}$}\\
&all other possibilities&&&\\[3ex]\hhline{|=|=|=|=|=|}&&&&\\
\multirow{2}{*}{7}&$3z_r+\frac2\lambda\leq A\leq 3z_\ell-\frac2\lambda$&$z_r+\frac2\lambda\leq \tilde{z}_1\leq \frac13A+\frac4{3\lambda}$&$\tilde{z}_2=A-2\tilde{z}_1+\frac2\lambda$&\multirow{2}{*}{$\tilde{z}_3=\tilde{z}_1-\frac2\lambda$}\\
&$z_\ell\geq z_r+\frac2\lambda$ &$\frac13A+\frac2{3\lambda}\leq \tilde{z}_1\leq z_\ell$&$=A-2\tilde{z}_3-\frac2\lambda$&\\[3ex]\hline&&&&\\
\multirow{2}{*}{8}&$3z_r+\frac2\lambda\leq A\leq 3z_\ell-\frac2\lambda$&\multirow{2}{*}{$\tilde{z}_1=z_r+\frac2\lambda$}&\multirow{2}{*}{$\tilde{z}_2=A-\tilde{z}_1-\tilde{z}_3$}&\multirow{2}{*}{$\tilde{z}_3=z_r$}\\
&$z_\ell\leq z_r+\frac2\lambda\leq a,\ z_1\geq z_\ell$ &&&\\[3ex]\hline&&&&\\
\multirow{2}{*}{9}&$3z_r+\frac2\lambda\leq A\leq 3z_\ell-\frac2\lambda$&\multirow{2}{*}{$\tilde{z}_1=a$}&\multirow{2}{*}{$\tilde{z}_2=A-\frac{3a+b}2$}&\multirow{2}{*}{$\tilde{z}_3=\frac{a+b}2$}\\
&$a\leq z_r+\frac2\lambda,\ z_1\geq z_\ell$ &&&\\[3ex]\hline&&&&\\
\multirow{2}{*}{10}&$3z_r+\frac2\lambda\leq A\leq 3z_\ell-\frac2\lambda$&\multirow{2}{*}{$\tilde{z}_1=z_\ell$}&\multirow{2}{*}{$\tilde{z}_2=A-\tilde{z}_1-\tilde{z}_3$}&\multirow{2}{*}{$\tilde{z}_3=z_\ell-\frac2\lambda$}\\
&$z_\ell\leq z_r+\frac2\lambda,\ z_1\leq z_\ell$&&&\\[3ex]\hline
    \end{tabular}
    \caption{Summary of the values of $\tilde{z}_1$, $\tilde{z}_2$, $\tilde{z}_3$ yielding minimum total variation in the admissible set $G$.}
    \label{mintv}
\end{table}
\begin{theorem}\label{thm_mintv}
Let $z_\ell,z_1,z_2,z_3,z_r\in[b,a]$ be the numerical approximations within five adjacent cells from left to right, such that $\frac{a+5b}2\leq A=z_1+z_2+z_3\leq \frac{5a+b}2$, then we can define $\tilde{z}_1,\tilde{z}_2,\tilde{z}_3$ in the admissible set $G$ following Table \ref{mintv} such that $TV(z_\ell,\tilde{z}_1,\tilde{z}_2,\tilde{z}_3,z_r)\leq TV(z_\ell,z_1,z_2,z_3,z_r)$. In addition, the chosen $\tilde{z}_1$, $\tilde{z}_2$, $\tilde{z}_3$ satisfy $\tilde{z}_1\geq\frac{a+b}2$, $\tilde{z}_3\leq\frac{a+b}2$ and $\tilde{z}_1=\tilde{z}_3+\frac2\lambda$. 
\end{theorem}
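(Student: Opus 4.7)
The plan is to reduce the minimization of $TV(z_\ell,\tilde z_1,\tilde z_2,\tilde z_3,z_r)$ over $(\tilde z_1,\tilde z_2,\tilde z_3)\in G$ with fixed sum $A$ to a one-parameter problem. The ordering built into $G$ collapses the middle two terms of the TV to give
\[
TV(z_\ell,\tilde z_1,\tilde z_2,\tilde z_3,z_r) = |\tilde z_1-z_\ell| + (\tilde z_1-\tilde z_3) + |\tilde z_3-z_r|,
\]
which is independent of $\tilde z_2$. Lemma~\ref{compare0} immediately gives $\tilde z_1\geq\frac{a+b}{2}\geq\tilde z_3$, settling two of the four claimed properties for free. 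Writing the TV as $f(\tilde z_1)+g(\tilde z_3)$ with $f(x)=|x-z_\ell|+x$ non-decreasing and $g(y)=|y-z_r|-y$ non-increasing, one verifies that moving $\tilde z_1$ down and $\tilde z_3$ up symmetrically (keeping $\tilde z_2$ fixed) shrinks the middle term by twice the amount while changing each absolute-value term by at most that amount, so TV is non-increasing. Hence one may assume the gap constraint is tight, producing the fourth claimed property $\tilde z_1=\tilde z_3+\frac{2}{\lambda}$.

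With the tight gap, $\tilde z_3=\tilde z_1-\frac{2}{\lambda}$ and $\tilde z_2=A-2\tilde z_1+\frac{2}{\lambda}$, and admissibility reduces to $\tilde z_1\in[L,U]$ where
\[
L=\max\Bigl(b+\tfrac{2}{\lambda},\,\tfrac{A}{3}+\tfrac{2}{3\lambda}\Bigr),\qquad U=\min\Bigl(a,\,\tfrac{A}{3}+\tfrac{4}{3\lambda}\Bigr).
\]
Lemma~\ref{ineq} together with $A\in[\tfrac{a+5b}{2},\tfrac{5a+b}{2}]$ guarantees $L\leq U$, so the interval is nonempty. On $[L,U]$ the simplified TV is piecewise linear and convex in $\tilde z_1$ with kinks at $\tilde z_1=z_\ell$ and $\tilde z_1=z_r+\frac{2}{\lambda}$, and is constant on the overlap $[\max(z_\ell,L),\min(z_r+\tfrac{2}{\lambda},U)]$ when this overlap is nonempty. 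The pivots $A=3z_\ell-\tfrac{2}{\lambda}$ and $A=3z_r+\tfrac{2}{\lambda}$ built into the table are precisely the comparisons $z_\ell\lessgtr L$ and $z_r+\tfrac{2}{\lambda}\gtrless U$, i.e.\ whether each kink lies above, below, or inside $[L,U]$.

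The case analysis then proceeds geometrically. In Cases 1--3 ($A\geq 3z_\ell-\tfrac{2}{\lambda}$), $z_\ell\leq L$, so $f$ is strictly increasing on $[L,U]$ and the optimizer pushes $\tilde z_3$ upward: to $z_r$ if reachable (Case~1, under $\tfrac{A}{3}-\tfrac{4}{3\lambda}\leq z_r$), else to the boundary $b$ (Case~2, under $A\leq a+2b$), else to the interior corner $\tfrac{A}{3}-\tfrac{4}{3\lambda}$ (Case~3). Cases 4--6 are the mirror image under $z_\ell\leftrightarrow z_r+\tfrac{2}{\lambda}$. Cases 7--10 treat the middle regime $3z_r+\tfrac{2}{\lambda}\leq A\leq 3z_\ell-\tfrac{2}{\lambda}$, where TV is constant on a subinterval of $[L,U]$; the table then picks a representative depending on whether $z_\ell\geq z_r+\tfrac{2}{\lambda}$ (Case~7, giving a whole range for $\tilde z_1$) or on which of $z_\ell,\,z_r+\tfrac{2}{\lambda}$ lie in $[b,a]$ (Cases~8--10). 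In each case, Lemma~\ref{ineq} together with direct arithmetic confirms $(\tilde z_1,\tilde z_2,\tilde z_3)\in G$ and $\tilde z_1-\tilde z_3=\tfrac{2}{\lambda}$.

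Finally, to compare with the original TV, apply Lemma~\ref{lemma_r3} to sort $(z_1,z_2,z_3)$ monotonically (only decreasing TV). Because $I_j$ is an ETC of type I, II, or III, the sorted triple satisfies $z_1\geq z_3+\tfrac{2}{\lambda}$ and therefore lies in $G$ with sum $A$. Since the table values achieve the minimum of TV over $G$ with sum $A$, their TV does not exceed that of the sorted---and hence the original---triple, yielding the first claim. The main obstacle is the sheer bookkeeping across the ten cases: each requires independent arithmetic verification of feasibility in $G$, of the tight gap, and of optimality; moreover, the case conditions are deliberately non-exclusive, so consistency at the pivots $A=3z_\ell-\tfrac{2}{\lambda}$, $A=3z_r+\tfrac{2}{\lambda}$ and the corners $z_r=\tfrac{A}{3}-\tfrac{4}{3\lambda}$, $A=a+2b$, $A=2a+b$ must be cross-checked. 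The most delicate subcases are 3, 6, and 7, where the minimizer either sits in the interior of $[L,U]$ or the TV is degenerate on a range and the representative must be chosen to facilitate the subsequent non-intersection analysis of partition lines.
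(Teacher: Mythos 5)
Your proposal is sound and reaches the same conclusion, but it is organized quite differently from the paper's proof. The paper never isolates a one-parameter structure: for each of the ten rows of Table \ref{mintv} it directly verifies $(\tilde z_1,\tilde z_2,\tilde z_3)\in G$ and then compares $TV(z_\ell,\tilde z_1,\tilde z_2,\tilde z_3,z_r)$ against $TV(z_\ell,z_1,z_2,z_3,z_r)$ by row-specific chains of inequalities (splitting further on $z_3\lessgtr z_r$, $A\lessgtr a+2b$, etc.), and it handles the vacuous configuration $z_3\geq z_r$, $z_r>\frac13A-\frac2{3\lambda}$ by deriving a contradiction inside $G$. You instead observe that on $G$ the total variation collapses to $|\tilde z_1-z_\ell|+(\tilde z_1-\tilde z_3)+|\tilde z_3-z_r|=f(\tilde z_1)+g(\tilde z_3)$ with $f$ nondecreasing and $g$ nonincreasing, argue the gap constraint may be taken tight, and read off the table as the arg-min of a convex piecewise-linear function of $\tilde z_1$ on $[L,U]$; the feasibility of $[L,U]$ via Lemma \ref{ineq} and the identification of the table's pivots with the kink locations are both correct. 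This buys a conceptual explanation of \emph{why} the table has the form it does and makes the claims $\tilde z_1=\tilde z_3+\frac2\lambda$, $\tilde z_1\geq\frac{a+b}2\geq\tilde z_3$ structural rather than observational, at the cost of still owing the same routine per-case arithmetic that the paper writes out in full.

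Two points need attention. First, your gap-tightening move as stated --- decrease $\tilde z_1$ by $\epsilon$, increase $\tilde z_3$ by $\epsilon$, keep $\tilde z_2$ fixed --- can exit $G$ before the gap becomes tight when $\tilde z_1=\tilde z_2$ (or $\tilde z_2=\tilde z_3$), since the ordering $\tilde z_1\geq\tilde z_2\geq\tilde z_3$ is violated immediately; you then need the asymmetric variant (e.g.\ lower $\tilde z_1$ and $\tilde z_2$ by $\epsilon$ each and raise $\tilde z_3$ by $2\epsilon$), which your monotonicity of $f$ and $g$ still covers and which is exactly the configuration appearing in Cases 3 and 6 of the table. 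Second, the ten feasibility-and-optimality verifications are asserted rather than performed; they are genuinely routine within your framework, but they constitute the bulk of the paper's proof and would need to be written out for the argument to be complete.
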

\begin{proof}
$\tilde{z}_1=\tilde{z}_3+\frac2\lambda$ can be observed from Table \ref{mintv} directly. $\tilde{z}_1\geq\frac{a+b}2$, $\tilde{z}_3\leq\frac{a+b}2$ can be obtained by Lemma \ref{compare0}. Now we proceed to demonstrate how to find the $\tilde{z}_i's$, $i=1,2,3$.
We consider different cases one by one and use Lemma \ref{ineq}.
\begin{itemize}
\item \underline{\bf{Cases 1, 2 and 3.}} Since $\displaystyle 3z_\ell-\frac2\lambda\leq A=z_1+z_2+z_3\leq 3z_1-\frac2\lambda,$ then
$$
z_1\geq \frac13A+\frac2{3\lambda}\geq z_\ell.
$$
Hence
$$
TV(z_\ell,z_1,z_2,z_3,z_r)=|z_\ell-z_1|+|z_1-z_3|+|z_3-z_r|=z_1-z_\ell+z_1-z_3+|z_3-z_r|.
$$
Then we have two possibilities:
\begin{enumerate}
\item If $z_3\leq z_r$, then
$$
TV(z_\ell,z_1,z_2,z_3,z_r)=z_1-z_\ell+z_1-z_3+z_r-z_3=2(z_1-z_3)-z_\ell+z_r\geq \frac4\lambda-z_\ell+z_r.
$$
\begin{enumerate}
\item If $\frac13A-\frac4{3\lambda}\geq b$, i.e. $A\geq a+2b$, then we can take
$$
\tilde{z}_1=\frac13A+\frac2{3\lambda}\in[z_\ell,z_1],\quad \tilde{z}_3=\frac13A-\frac4{3\lambda},\quad \tilde{z}_2=\frac13A+\frac2{3\lambda}\in[\tilde{z}_3,\tilde{z}_1]
$$
\begin{enumerate}
\item If $\frac13A-\frac4{3\lambda}\leq z_r$, then $\tilde{z}_3\leq z_r$. Therefore
$$
TV(z_\ell,\tilde{z}_1,\tilde{z}_2,\tilde{z}_3,z_r)=\tilde{z}_1-z_\ell+\tilde{z}_1-\tilde{z}_3+z_r-\tilde{z}_3=\frac4{\lambda}-z_\ell+z_r\leq TV(z_\ell,z_1,z_2,z_3,z_r).
$$
\item If $\frac13A-\frac4{3\lambda}\geq z_r$, then $\tilde{z}_3\geq z_r\geq z_3$. Therefore,
$$
TV(z_\ell,\tilde{z}_1,\tilde{z}_2,\tilde{z}_3,z_r)=\tilde{z}_1-z_\ell+\tilde{z}_1-\tilde{z}_3+\tilde{z}_3-z_r\leq z_1-z_\ell+z_1-z_3\leq TV(z_\ell,z_1,z_2,z_3,z_r),
$$
where in the first inequality we used the fact that $z_1\geq \tilde{z}_1$ and $z_3\leq z_r$.
\end{enumerate}
\item If $A\leq a+2b$, then $\frac13A-\frac4{3\lambda}\leq b$ leading to $\frac13A+\frac2{3\lambda}\leq \frac{a+b}2$. We take
$$
\tilde{z}_1=\frac{a+b}2\in[z_\ell,z_1],\quad \tilde{z}_3=b,\quad \tilde{z}_2=A-\tilde{z}_1-\tilde{z}_3=A-\frac{a+3b}2
$$
It is easy to see $\displaystyle \tilde{z}_3=b=\frac{a+5b}2-\frac{a+3b}2\leq \tilde{z}_2\leq a+2b-\frac{a+3b}2=\frac{a+b}2=\tilde{z}_1$. Moreover,
$$
TV(z_\ell,\tilde{z}_1,\tilde{z}_2,\tilde{z}_3,z_r)=\tilde{z}_1-z_\ell+\tilde{z}_1-\tilde{z}_3+z_r-\tilde{z}_3=\frac4\lambda-z_\ell+z_r\leq TV(z_\ell,z_1,z_2,z_3,z_r).
$$
\end{enumerate}

\item If $z_3\geq z_r$, then $\displaystyle z_1\geq z_r+\frac2\lambda,\  z_r\leq z_3\leq\frac{a+b}2=a-\frac2\lambda$ and
$$
TV(z_\ell,z_1,z_2,z_3,z_r)=z_1-z_\ell+z_1-z_3+z_3-z_r=2z_1-z_\ell-z_r.
$$
Therefore, we need to choose the smallest possible $z_1$.
\begin{enumerate}
\item If $\frac13A-\frac4{3\lambda}\geq z_r$, then we can take
$$
a\geq z_1\geq\tilde{z}_1=\frac13A+\frac2{3\lambda}\geq z_\ell,\quad \tilde{z}_3=\frac13A-\frac4{3\lambda}\geq z_r\geq b,\quad \tilde{z}_2=\frac13A+\frac2{3\lambda}\in[\tilde{z}_3,\tilde{z}_1].
$$
\item If $\frac13A-\frac4{3\lambda}\leq z_r\leq a-\frac2\lambda$, then we have
$$
z_1\geq z_3+\frac2\lambda\geq z_r+\frac2\lambda.
$$
\begin{enumerate}
\item If $z_r\leq \frac13A-\frac2{3\lambda}$, then we can choose
$$
a\geq z_1\geq \tilde{z}_1=z_r+\frac2\lambda\geq \frac13A+\frac2{3\lambda}\geq z_\ell,\quad \tilde{z}_3=z_r\geq b,\quad \tilde{z}_2=A-2z_r-\frac2\lambda.
$$
Since $\frac13A-\frac4{3\lambda}\leq z_r$, then
$$
\tilde{z}_2=A-2z_r-\frac2\lambda\leq 3z_r+\frac4\lambda-2z_r-\frac2\lambda=z_r+\frac2\lambda=\tilde{z}_1.
$$
Moreover, since $ z_r\leq \frac13A-\frac2{3\lambda}$, then
$$
\tilde{z}_2=A-2z_r-\frac2\lambda\geq 3z_r+\frac2\lambda-2z_r-\frac2\lambda=z_r=\tilde{z}_3.
$$
\item If $\displaystyle z_r> \frac13A-\frac2{3\lambda}$, we cannot find any $z_1$, $z_2$, $z_3$ in $G$. Actually, since $z_3\geq z_r$ and $z_1\geq z_3+\frac2\lambda$, then
$$
z_r\leq z_3\leq z_2=A-z_1-z_3\leq A-2z_r-\frac2\lambda< 3z_r+\frac2\lambda-2z_r-\frac2\lambda=z_r,
$$
which is a contradiction.
\end{enumerate}
\end{enumerate}
\end{enumerate}
\item\underline{\bf{Cases 4, 5 and 6.}} The proof for Cases 4, 5 and 6 can be obtained following that for Cases 1, 2 and 3 with some minor changes, so we skip it.
\item\underline{\bf{Case 7.}} If $\displaystyle 3z_r+\frac2\lambda\leq A\leq 3z_\ell-\frac2\lambda$ and $\displaystyle z_\ell\geq z_r+\frac2\lambda$. We take $\tilde{z}_1$ such that 
$$
z_r+\frac2\lambda\leq \tilde{z}_1\leq \frac13A+\frac4{3\lambda},\quad \frac13A+\frac2{3\lambda}\leq \tilde{z}_1\leq z_\ell,
$$
and
$\displaystyle \tilde{z}_3=\tilde{z}_1-\frac2\lambda$, $\displaystyle \tilde{z}_2=A-\tilde{z}_1-\tilde{z}_3.$ We will show that such a selection of $\tilde{z}_1$, $\tilde{z}_2$, $\tilde{z}_3$ is in $G$.
\begin{enumerate}
\item Since $\displaystyle3z_r+\frac2\lambda\leq A$, then $\displaystyle \frac13A+\frac4{3\lambda}\geq z_r+\frac2{3\lambda}+\frac4{3\lambda}= z_r+\frac2\lambda.$
Moreover, since $\displaystyle A\leq 3z_\ell-\frac2\lambda$, then $\displaystyle\frac13A+\frac2{3\lambda}\leq z_\ell$. In addition, it is easy to see that $\displaystyle\frac13A+\frac2{3\lambda}\leq \frac13A+\frac4{3\lambda}$ and based on the assumption that $\displaystyle z_\ell\geq z_r+\frac2\lambda$, we have the existence of $\tilde{z}_1$.
\item By direct computation, we have
$$
\tilde{z}_2=A-\tilde{z}_1-\tilde{z}_3=A-2\tilde{z}_1+\frac2\lambda\leq3\tilde{z}_1-\frac2\lambda-2\tilde{z}_1+\frac2\lambda=\tilde{z}_1,$$
and
$$
\tilde{z}_2=A-2\tilde{z}_1+\frac2\lambda\geq3\tilde{z}_1-\frac4\lambda-2\tilde{z}_1+\frac2\lambda=\tilde{z}_3.
$$
\end{enumerate}
Finally, since $\tilde{z}_1\leq z_\ell\leq a$ and $\tilde{z}_3=\tilde{z}_1-\frac2\lambda\geq z_r\geq b$. Therefore, we conclude that $\tilde{z}_1$, $\tilde{z}_2$, $\tilde{z}_3$ are chosen from $G$. Finally, we will show that such a selection yields the minimum total variation. Actually, since $\tilde{z}_1\leq z_\ell$ and $\tilde{z}_3=\tilde{z}_1-\frac2\lambda\geq z_r$, we have $z_\ell\geq \tilde{z}_1\geq \tilde{z}_2\geq \tilde{z}_3\geq z_r$. Clearly, the total variation is the minimum, which is $z_\ell-z_r.$
\item\underline{\bf{Cases 8 and 9.}} Since $z_1\geq z_\ell$, then $TV(z_\ell,z_1,z_2,z_3,z_r)=z_1-z_\ell+z_1-z_3+|z_3-z_r|.$
\begin{enumerate}
\item If $z_r+\frac2\lambda\leq a$, we take
$$
\tilde{z}_1=z_r+\frac2\lambda\in[z_\ell,a],\quad \tilde{z}_3=z_r\geq b,\quad \tilde{z}_2=A-\tilde{z}_1-\tilde{z}_3.
$$
We need to show that the selection is in $G$. Actually, since $A\geq 3z_r+\frac2\lambda$, then
$$
\tilde{z}_2=A-\tilde{z}_1-\tilde{z}_3=A-2z_r-\frac2\lambda\geq 3z_r+\frac2\lambda-2z_r-\frac2\lambda=z_r=\tilde{z}_3.
$$
Moreover, since $A\leq3z_\ell-\frac2\lambda$ and $z_\ell\leq z_r+\frac2\lambda$, then $A\leq3z_r+\frac4\lambda,$ leading to
$$
\tilde{z}_2=A-2z_r-\frac2\lambda\leq3z_r+\frac4\lambda-2z_r-\frac2\lambda=z_r+\frac2\lambda=\tilde{z}_1.
$$
Therefore, $(\tilde{z}_1,\tilde{z}_2,\tilde{z}_3)\in G$. Next, we consider the TV. It is easy to see that
$$
TV(z_\ell,\tilde{z}_1,\tilde{z}_2,\tilde{z}_3,z_r)=\tilde{z}_1-z_\ell+\tilde{z}_1-\tilde{z}_3=z_r-z_\ell+\frac4\lambda.
$$
\begin{enumerate}
\item If $z_3\leq z_r$, then
$$
TV(z_\ell,z_1,z_2,z_3,z_r)=z_1-z_\ell+z_1-z_3+z_r-z_3=2(z_1-z_3)-z_\ell+z_r\geq \frac4\lambda-z_\ell+z_r=TV(z_\ell,\tilde{z}_1,\tilde{z}_2,\tilde{z}_3,z_r).
$$
\item If $z_3\geq z_r$, then $z_1\geq z_3+\frac2\lambda\geq z_r+\frac2\lambda.$ Therefore,
$$
TV(z_\ell,z_1,z_2,z_3,z_r)=z_1-z_\ell+z_1-z_3+z_3-z_r=2z_1-z_\ell-z_r\geq \frac4\lambda-z_\ell+r_r=TV(z_\ell,\tilde{z}_1,\tilde{z}_2,\tilde{z}_3,z_r).
$$
\end{enumerate}
\item If $z_r+\frac2\lambda\geq a$, then $z_r\geq\frac{a+b}2\geq z_3$. Hence $$
TV(z_\ell,z_1,z_2,z_3,z_r)=z_1-z_\ell+z_1-z_3+z_r-z_3\geq\frac4\lambda+z_r-z_\ell.
$$
We take
$$
\tilde{z}_1=a,\quad \tilde{z}_3=\frac{a+b}2,\quad \tilde{z}_2=A-\frac{3a+b}2\leq a=\tilde{z}_1.
$$
Moreover, $\displaystyle \tilde{z}_2=A-\frac{3a+b}2\geq3z_r+\frac2\lambda-\frac{3a+b}2\geq\frac{3a+3b}2+\frac{a-b}2-\frac{3a+b}2=\frac{a+b}2=\tilde{z}_3.$ Therefore, $(\tilde{z}_1,\tilde{z}_2,\tilde{z}_3)\in G$. Finally,
$$
TV(z_\ell,\tilde{z}_1,\tilde{z}_2,\tilde{z}_3,z_r)=\tilde{z}_1-z_\ell+\tilde{z}_1-\tilde{z}_3+z_r-\tilde{z}_3=\frac4\lambda+z_r-z_\ell\leq TV(z_\ell,z_1,z_2,z_3,z_r).
$$
\end{enumerate}
\item\underline{\bf{Case 10.}} Since $z_1\leq z_\ell$ and $z_\ell\leq z_r+\frac2\lambda$, then
$$z_3\leq z_1-\frac2\lambda\leq z_\ell-\frac2\lambda\leq z_r+\frac2\lambda-\frac2\lambda=z_r.
$$
Hence,
$$
TV(z_\ell,z_1,z_2,z_3,z_r)=z_\ell-z_1+z_1-z_3+z_r-z_3=z_\ell+z_r-2z_3\geq z_\ell+z_r-2(z_\ell-\frac2\lambda)=\frac4\lambda+z_r-z_\ell.
$$
We take
$$
\tilde{z}_1=z_\ell\leq a,\quad \tilde{z}_3=z_\ell-\frac2\lambda\geq z_1-\frac2\lambda\geq b,\quad \tilde{z}_2=A-\tilde{z}_1-\tilde{z}_3.
$$
We first show that such a selection is in $G$. Since $\displaystyle A\leq3z_l-\frac2\lambda$, then
$$
\tilde{z}_2=A-\tilde{z}_1-\tilde{z}_3\leq 3z_l-\frac2\lambda-\tilde{z}_1-\tilde{z}_1+\frac2\lambda=\tilde{z}_1.
$$
On the other hand, since $\displaystyle A\geq 3z_r+\frac2\lambda$ and $\displaystyle z_\ell\leq z_r+\frac2\lambda$, then $\displaystyle A\geq3z_\ell-\frac6\lambda+\frac2\lambda=3z_\ell-\frac4\lambda.$
Therefore,
$$
\tilde{z}_2=A-\tilde{z}_1-\tilde{z}_3\geq3z_\ell-\frac4\lambda-z_\ell-z_\ell+\frac2\lambda=z_\ell-\frac2\lambda=\tilde{z}_3.
$$
Finally, since $\displaystyle z_r\geq z_\ell-\frac2\lambda=\tilde{z}_3,$ then
$$
TV(z_\ell,\tilde{z}_1,\tilde{z}_2,\tilde{z}_3,z_r)=\tilde{z}_1-\tilde{z}_3+z_r-\tilde{z}_3=z_\ell-z_\ell+\frac2\lambda+z_r-z_\ell+\frac2\lambda=\frac4\lambda+z_r-z_\ell\leq TV(z_\ell,z_1,z_2,z_3,z_r).
$$
\end{itemize}
\end{proof}

\subsubsection{Adjustment from $\tilde{z}_1$, $\tilde{z}_2$, $\tilde{z}_3$ to $r_1$, $r_2$, $r_3$ with TVD property}

 We summarized all the possible cases that yielding minimum total variation in $G$ in Table~\ref{mintv}. Now we only need to discuss the $\tilde{z}_i's$, $i=1,2,3$, given in Table \ref{mintv}. We can proceed to redefine the numerical approximations within the influence region such that the new total variation is no larger than those given in Theorem \ref{thm_mintv}. We assume the redefined values are $r_1$, $r_2$ and $r_3$, respectively. In order to simplify the discussion, we always assume
$$
r_1\geq r_2\geq r_3.
$$
Before we state the next lemma, we would like to define a negative fraction as $+\infty$ in the rest of the paper.
\begin{lemma}\label{time0}
Suppose $r_1$, $r_2$, $r_3$ are the modified numerical approximations in cells $I_{j-i},$ $I_j,$ $I_{j+1}$, then the partition lines originated from $x_{i-\frac12}$, $i=j-2,\cdots,j+3$ do not intersect under the condition
$$
\tilde{\lambda} < 2\min\left\{\frac1{s_\ell-r_1}, \frac1{z_\ell-r_2}, \frac1{r_1-r_3}, \frac1{r_2-z_r}, \frac1{r_3-s_r}\right\}.
$$
\end{lemma}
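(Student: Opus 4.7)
The plan is a direct kinematic calculation using the explicit Rankine--Hugoniot speeds. For each of the six interfaces $x_{i-1/2}$, $i=j-2,\ldots,j+3$, formula \eqref{nu} gives a speed equal to the average of the two adjacent cell values, namely $\tfrac12(s_\ell+z_\ell)$, $\tfrac12(z_\ell+r_1)$, $\tfrac12(r_1+r_2)$, $\tfrac12(r_2+r_3)$, $\tfrac12(r_3+z_r)$, and $\tfrac12(z_r+s_r)$. Two straight lines starting at $x_L<x_R=x_L+\Delta x$ with speeds $\nu_L,\nu_R$ stay in this left-to-right order throughout $[t^n,t^{n+1}]$ iff either $\nu_L\le\nu_R$ or $\tilde\lambda<1/(\nu_L-\nu_R)$, which under the convention that negative denominators produce $+\infty$ collapses to the single bound $\tilde\lambda<1/(\nu_L-\nu_R)$.

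The core step is to read off the five consecutive speed differences $\nu_{i-1/2}-\nu_{i+1/2}$. The shared middle cell value telescopes out of each one, leaving $\tfrac12(s_\ell-r_1)$, $\tfrac12(z_\ell-r_2)$, $\tfrac12(r_1-r_3)$, $\tfrac12(r_2-z_r)$, and $\tfrac12(r_3-s_r)$, respectively. Imposing the above non-intersection bound pair by pair, and taking the minimum over the five pairs, recovers exactly the condition stated in the lemma.

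Finally, I would upgrade pairwise non-intersection of consecutive lines to non-intersection of the entire family via a transitivity argument: if on $[t^n,t^{n+1}]$ each line $L_i$ stays strictly to the left of $L_{i+1}$, then for any $i<k$ the line $L_i$ stays strictly to the left of $L_k$ as well, so no intersection can occur among the six lines. I do not anticipate a real technical obstacle here; once the R--H averaging is substituted and the telescoping kills the middle values, the lemma reduces to collecting five elementary bounds and applying the order-transitivity observation.
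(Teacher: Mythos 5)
Your proposal is correct and follows essentially the same route as the paper: substitute the Rankine--Hugoniot averages for the interface speeds, observe that the shared middle cell value telescopes out of each adjacent speed difference (e.g.\ $\frac{r_1+r_2}{2}-\frac{r_2+r_3}{2}=\frac{r_1-r_3}{2}$ yields $\tilde{\lambda}<\frac{2}{r_1-r_3}$), and collect the five resulting bounds with the $+\infty$ convention for nonpositive denominators. The only difference is that you make explicit the transitivity step upgrading pairwise adjacent non-intersection to non-intersection of the whole family, which the paper leaves implicit.
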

\begin{proof}
Suppose the partition lines originated from $x_{j\pm\frac12}$ do not intersect, then we have $\displaystyle \frac{r_1+r_2}2\Delta t<\Delta x+\frac{r_2+r_3}2\Delta t,$ which further implies $\displaystyle (r_1-r_3)\Delta t<2\Delta x.$ Clearly, if $r_1-r_3\leq0$, then the two characteristics never intersect, and the time should be $+\infty.$ Otherwise, we need $\displaystyle \Delta t<\frac{2\Delta x}{r_1-r_3}$, and a sufficient condition is $\displaystyle\tilde{\lambda}<\frac{2}{r_1-r_3}$. The other terms on the right-hand side can be obtained following the same lines.
\end{proof}
The above lemma has the following straightforward corollary.
\begin{corollary}\label{time_useful}
Suppose the numerical approximations are within the interval $[b,a]$, if \begin{equation}\label{rs}
r_1\geq\frac{a+b}2,\quad r_3\leq\frac{a+b}2,\quad r_1-r_3\leq\frac2\lambda,\quad z_\ell-r_2\leq\frac2\lambda,\quad r_2-z_r\leq\frac2\lambda,
\end{equation}
then $\displaystyle s_\ell-r_1\leq\frac2\lambda,\ r_3-s_r\leq\frac2\lambda$ and the partition lines originated from $x_{i-\frac12}$, $i=j-2,\cdots,j+3$ do not intersection under the condition \eqref{time}.
\end{corollary}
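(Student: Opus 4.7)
The plan is to reduce the claim to a direct application of Lemma \ref{time0}, for which three of the five required pairwise bounds are already supplied in the hypothesis \eqref{rs}; only the two ``outer'' estimates $s_\ell - r_1 \leq 2/\lambda$ and $r_3 - s_r \leq 2/\lambda$ remain to be verified.

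These two bounds follow from the one-sided assumptions $r_1 \geq (a+b)/2$ and $r_3 \leq (a+b)/2$ combined with the crude range $s_\ell, s_r \in [b,a]$ of the numerical approximations and the calibration $\lambda = 4/(a-b)$ built into \eqref{time}. Concretely, I would compute
$$s_\ell - r_1 \leq a - \frac{a+b}{2} = \frac{a-b}{2} = \frac{2}{\lambda},$$
and symmetrically $r_3 - s_r \leq (a+b)/2 - b = (a-b)/2 = 2/\lambda$. If either difference happens to be nonpositive, the paper's blanket convention that a negative denominator represents $+\infty$ handles the corresponding reciprocal automatically, so no case split is needed.

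With all five differences $s_\ell - r_1,\ z_\ell - r_2,\ r_1 - r_3,\ r_2 - z_r,\ r_3 - s_r$ thereby bounded above by $2/\lambda$, each reciprocal inside the min of Lemma \ref{time0} is at least $\lambda/2$, so $2\min\{\cdots\}\ \geq \lambda$. The condition \eqref{time}, namely $\tilde\lambda = \Delta t/\Delta x < \lambda$, is therefore strictly sharper than what Lemma \ref{time0} requires, and the lemma delivers the non-intersection of all six partition lines from $x_{i-1/2}$, $i = j-2, \ldots, j+3$ on $[t^n, t^{n+1}]$. I do not foresee any real obstacle here: the corollary is essentially a bookkeeping step that packages the calibration $\lambda = 4/(a-b)$ together with the ``middle-of-the-interval'' bounds on $r_1$ and $r_3$ into a form more convenient than the raw five-slope estimate of Lemma \ref{time0}, which will be the useful shape when \eqref{rs} is invoked inside the main proof of Theorem \ref{main}.
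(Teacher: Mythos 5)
Your argument is correct and is essentially the paper's own proof: both verify the two outer bounds via $s_\ell-r_1\leq a-\frac{a+b}{2}=\frac{a-b}{2}=\frac{2}{\lambda}$ (and symmetrically for $r_3-s_r$), use the $+\infty$ convention for nonpositive differences, and then conclude that every reciprocal in Lemma \ref{time0} is at least $\frac{\lambda}{2}$, so that \eqref{time} implies the required non-intersection. No gaps.
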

\begin{proof}The two inequalities are straightforward, hence we skip the details. We first consider the three missing terms in Lemma \ref{time0} one by one.
\begin{enumerate}
\item $\displaystyle\frac1{s_\ell-r_1}.$
\begin{enumerate}
\item If $s_\ell\leq r_1$, then $\displaystyle\frac1{s_\ell-r_1}=\infty$, and it does not contribute anything in the time step restriction.
\item If $s_\ell\geq r_1$, then
$$
s_\ell-r_1\leq a-\frac{a+b}2=\frac{a-b}2.
$$
Since $\lambda=\frac4{a-b}$, then $\displaystyle\frac1{s_\ell-r_1}$ does not contribute anything in the time step restriction.
\end{enumerate}
\item $\displaystyle\frac1{r_3-s_r}.$ The analysis can be obtained following the same lines given for $\displaystyle\frac1{s_\ell-r_1}$, so we skip it.
\item $\displaystyle\frac1{r_1-r_3}.$ Since $\displaystyle 0\leq r_1-r_3\leq \frac2\lambda$, then $\displaystyle\frac1{r_1-r_3}\geq\frac\lambda2$. Therefore, $\displaystyle\frac1{r_1-r_3}$ does not contribute anything in the time step restriction.
\item $\displaystyle\frac1{z_\ell-r_2}$ and $\displaystyle\frac1{r_2-z_r}$. The analyses can be obtained following the same lines given for $\displaystyle\frac1{r_1-r_3}.$ So we skip them.
\end{enumerate}
\end{proof}
The above corollary is quite helpful in finding suitable numerical approximations within the influence region such that the partition lines do not intersect among the cases in Table \ref{mintv}. However, two choices of $\tilde{z}_i's$ needs special treatments. We state the technique for the two special cases in Theorem \ref{thm_redefine_special}-\ref{thm_redefine_special2} and the summary for all the cases in Theorem \ref{thm_redefine}.

We would like to start from some special cases in Table \ref{mintv}. For simplicity of presentation, we drop the tilde and use $z_i$ for $\tilde{z}_i$, $i=1,2,3$.
\begin{theorem}\label{thm_redefine_special}
Suppose $I_j$ is an ETC, and the influence region is given by Case \ref{ir3} in Definition \ref{ir}. If $\lambda=\frac4{a-b}$, then we can find $r_1$, $r_2$, $r_3$ defined in cells $I_{j-1},$ $I_j$, $I_{j+1}$, respectively, such that
$$
a\geq r_1\geq r_2\geq r_3\geq b,\quad s_\ell-r_1\leq\frac2\lambda,\quad z_\ell-r_2\leq\frac2\lambda,\quad r_1-r_3\leq\frac2\lambda,\quad r_2-z_r\leq\frac2\lambda,\quad  r_3-s_r\leq\frac2\lambda
$$
under one of the following conditions 
\begin{enumerate}
\item $\displaystyle z_1=z_2=\frac13A+\frac2{3\lambda}$, $\displaystyle z_3=\frac13A-\frac4{3\lambda}$, with $\displaystyle \frac{7a+5b}4\geq A\geq a+2b$.
\item $\displaystyle z_1=z_2=\frac13A+\frac2{3\lambda}$, $\displaystyle z_3=\frac13A-\frac4{3\lambda}$, with $\displaystyle \frac{7a+5b}4\leq A\leq\frac{5a+b}2$, $\displaystyle z_r\geq \frac{a+3b}4$ and $\displaystyle s_r+z_r\geq \frac{a+3b}2$.
\item $\displaystyle z_1=\frac13A+\frac4{3\lambda}$, $\displaystyle z_2=z_3=\frac13A-\frac2{3\lambda}$, with $\displaystyle \frac{5a+7b}4\leq A\leq 2a+b$.
\item $\displaystyle z_1=\frac13A+\frac4{3\lambda}$, $\displaystyle z_2=z_3=\frac13A-\frac2{3\lambda}$, with $\displaystyle \frac{5a+7b}4\geq A\geq \frac{a+5b}2$, $\displaystyle z_\ell\leq\frac{3a+b}4$ and $\displaystyle s_\ell+z_\ell\leq\frac{3a+b}2$.
\end{enumerate}
In addition, the proposed numerical approximations satisfy
$$
\sum_{j=1,2,3}z_j=\sum_{j=1,2,3}r_j,\quad  TV(z_\ell,z_1,z_2,z_3,z_r)\geq TV(z_\ell,r_1,r_2,r_3,z_r).
$$
Moreover, after updating the numerical approximations, $I_i$, $i=j-1, \cdots, j+1$ are not troubled cells. If $I_{j-2}$ is a troubled cell, it can only be a troubled cell of type II or IV. If $I_{j+2}$ is a troubled cell, it can only be a troubled cell of type III or V.
\end{theorem}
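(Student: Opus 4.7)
The approach is to handle the four cases by direct construction of $r_1, r_2, r_3$ and verification of all stated properties. Cases 3 and 4 reduce to Cases 1 and 2 via the symmetry $u \mapsto a+b-u$, which reverses order and swaps left/right roles while preserving the structure of the hypotheses and conclusions; so the real work is in Cases 1 and 2.

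The main idea is to parameterize the construction by $r_2$ alone: set $r_1 = (A-r_2)/2 + 1/\lambda$ and $r_3 = (A-r_2)/2 - 1/\lambda$. Then conservation $r_1+r_2+r_3 = A$ and the characteristic bound $r_1 - r_3 = 2/\lambda$ hold automatically. The remaining inequalities, namely $b \le r_3$, $r_1 \le a$, $r_3 \le (a+b)/2 \le r_1$, $r_3 \le r_2 \le r_1$, $r_2 - z_r \le 2/\lambda$, and $z_\ell - r_2 \le 2/\lambda$, all translate into $r_2$ lying in an explicit interval $J$ whose endpoints depend on $A, \lambda, z_\ell, z_r$. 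Corollary \ref{time_useful} then yields the remaining non-intersection conditions $s_\ell - r_1 \le 2/\lambda$ and $r_3 - s_r \le 2/\lambda$ for free.

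The key computation is showing $J \neq \emptyset$. For Case 1 ($a+2b \le A \le (7a+5b)/4$), this follows from Lemma \ref{ineq} together with $z_\ell - z_r \le a-b = 4/\lambda$, which guarantees $[z_\ell - 2/\lambda, z_r + 2/\lambda]$ meets the conservation/ordering interval centered near $A/3$. For Case 2 ($A \ge (7a+5b)/4$), the conservation lower bound on $r_2$ would push $r_2$ above $z_r + 2/\lambda$ in general; the auxiliary assumptions $z_r \ge (a+3b)/4$ and $s_r + z_r \ge (a+3b)/2$ (inherited from Def \ref{ir} Case I4 via negation of Case I2) are exactly what is needed to close this gap. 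Once $r_2 \in J$ is selected, the claim that $I_{j-1}, I_j, I_{j+1}$ are no longer troubled reduces to $|r_i - r_{i+1}| \le 2/\lambda$, a direct consequence of $r_1 - r_3 = 2/\lambda$ together with $r_2$ lying between them.

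Conservation is immediate by construction, and TVD follows because $(r_1, r_2, r_3) \in G$ realizes the minimum total variation established in Theorem \ref{thm_mintv}; one verifies that the ordering of $r_2$ relative to $z_\ell, z_r$ produces the same piecewise TV expression as the $\tilde z$'s. The type classification at the new boundary is clean: since $r_1 \ge (a+b)/2$, we have $s_\ell - r_1 \le a - (a+b)/2 = (a-b)/2 = 2/\lambda$, ruling out troubled cells of type I or III at $I_{j-2}$; the symmetric bound $r_3 - s_r \le 2/\lambda$ rules out types I or II at $I_{j+2}$. The main obstacle will be the Case 2 bookkeeping, where the three auxiliary inequalities on $A, z_r, s_r$ must be combined carefully with the MPP and ordering bounds to pin down both nonemptiness of $J$ and the TV equality in each subcase depending on whether $r_2$ hits the lower or upper endpoint.
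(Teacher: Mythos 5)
Your reduction of Conditions 3 and 4 to Conditions 1 and 2 by the symmetry $u\mapsto a+b-u$ combined with spatial reflection matches the paper, and your treatment of Condition 1 is essentially sound. However, there is a genuine gap in Condition 2: the rigid parameterization $r_1=\frac{A-r_2}{2}+\frac1\lambda$, $r_3=\frac{A-r_2}{2}-\frac1\lambda$, which forces $r_1-r_3=\frac2\lambda$ exactly, is infeasible there. Take $a=1$, $b=-1$ (so $\lambda=2$, $\frac2\lambda=1$), $A=2=\frac{5a+b}{2}$, and $z_r=-\frac12=\frac{a+3b}{4}$, $s_r=-\frac12$, which satisfy both auxiliary hypotheses of Condition 2 with equality. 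The constraint $r_2-z_r\le\frac2\lambda$ gives $r_2\le\frac12$, but then $r_1=\frac{2-r_2}{2}+\frac12\ge\frac54>a$, so your interval $J$ is empty; the auxiliary assumptions do not rescue the rigid parameterization. A valid triple does exist ($r_1=1$, $r_2=r_3=\frac12$), but it has $r_1-r_3=\frac12<\frac2\lambda$. The paper avoids this by keeping $r_1=z_1$ and only redistributing mass between $z_2$ and $z_3$ (setting $r_2=z_r+\frac2\lambda$ and $r_3=z_2+z_3-r_2$ when $z_2>z_r+\frac2\lambda$), so that $r_1-r_3$ is allowed to be strictly less than $\frac2\lambda$.

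A second, related problem: you claim Corollary \ref{time_useful} delivers $r_3-s_r\le\frac2\lambda$ ``for free,'' but that corollary requires $r_3\le\frac{a+b}{2}$, which fails in Condition 2 (in the example above $r_3=\frac12>\frac{a+b}{2}=0$). The paper instead proves $r_3-s_r\le\frac2\lambda$ directly from the hypothesis $s_r+z_r\ge\frac{a+3b}{2}$ together with the bound $z_2+z_3\le\frac{3a+b}{2}$. Finally, your TVD argument via Theorem \ref{thm_mintv} runs in the wrong direction: if $(r_1,r_2,r_3)\in G$ and the $\tilde z$'s minimize $TV$ over $G$, you only get $TV(r)\ge TV(\tilde z)$; you would need to verify equality case by case (and in the paper's construction the $r$'s typically leave $G$, since $r_1-r_3<\frac2\lambda$, so the comparison must be done by direct monotone-insertion of points as the paper does). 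The overall architecture is workable, but Condition 2 needs the more flexible construction.
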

\begin{proof}
We consider conditions 1 and 2 only, and the proof for conditions 3 and 4 can be obtained following the same line. We verify the conditions in Corollary \ref{time_useful}.

\begin{enumerate}
\item Suppose $\displaystyle a+2b\leq A\leq \frac{7a+5b}4$, then
$$
z_2+z_3=\frac13A+\frac2{3\lambda}+\frac13A-\frac4{3\lambda}=\frac23A-\frac2{3\lambda}\leq\frac{7a+5b}6-\frac{a-b}6=a+b,
$$
and
$$
z_2=\frac13A+\frac2{3\lambda}\geq\frac{a+2b}3+\frac{a-b}6=\frac{a+b}2.
$$
We take $r_1=z_1=z_2\geq\frac{a+b}2$, $r_2=\frac{a+b}2$, $r_3=z_2+z_3-r_2\in [z_3,r_2]$, to get
$$
\sum_{j=\ell,1,2,3,r}z_j=\sum_{j=\ell,1,2,3,r}r_j.
$$
Moreover, following direct computation, we can get
$$
b\leq z_3\leq r_3=z_2+z_3-r_2\leq\frac{a+b}2\leq z_1,\quad r_1-r_3\leq z_1-z_3=\frac2\lambda,
$$
and
$$
z_\ell-r_2\leq a-\frac{a+b}2=\frac{a-b}2=\frac2\lambda,\quad r_2-z_r\leq \frac{a+b}2-b=\frac{a-b}2=\frac2\lambda.
$$
Finally,
$$
TV(z_\ell,r_1,r_2,r_3,z_r)=TV(z_\ell,z_1,r_3,z_r)\leq TV(z_\ell,z_1,r_3,z_3,z_r)=TV(z_\ell,z_1,z_3,z_r)=TV(z_\ell,z_1,z_2,z_3,z_r).
$$
\item Suppose $\displaystyle \frac{7a+5b}4\leq A\leq\frac{5a+b}2$. Then $\displaystyle z_2+z_3=\frac23A-\frac2{3\lambda}\geq\frac23\frac{7a+5b}4-\frac23\frac{a-b}4=a+b$, leading to $\displaystyle z_2\geq\frac{a+b}2,$ and $z_\ell-z_2\leq a-\frac{a+b}2=\frac2\lambda$. 
We use the following procedure to find $r_j's$, $j=1,2,3$.
\begin{enumerate}
\item If $z_2\leq z_r+\frac2\lambda$, then we can take $r_j=z_j$, $j=1,2,3$.
\item If $z_2> z_r+\frac2\lambda$, then we take $r_1=z_1,$ $r_2=z_r+\frac2\lambda$ and $r_3=z_2+z_3-r_2.$ Clearly, we have $r_2<z_2=r_1$ and $r_3>z_3$, leading to $r_1-r_3<\frac2\lambda$ and $z_\ell-r_2\leq a-z_r-\frac2\lambda\leq a-b-\frac2\lambda=\frac2\lambda.$ Since
$$
z_2+z_3=\frac23A-\frac2{3\lambda}\leq\frac{5a+b}3-\frac{a-b}6=\frac{3a+b}2=\frac{a+3b}2+(a-b)\leq 2z_2+\frac4\lambda=2r_2,
$$
then $r_3=z_2+z_3-r_2\leq r_2.$
Moreover,
$$
r_3-s_r=z_2+z_3-r_2-s_r=z_2+z_3-z_r-\frac2\lambda-s_r\leq a+\frac{a+b}2-\frac{a-b}2-\frac{a+3b}2=\frac{a-b}2=\frac2\lambda,
$$
and
$$
TV(z_\ell,r_1,r_2,r_3,z_r)=TV(z_\ell,z_1,r_3,z_r)\leq TV(z_\ell,z_1,r_3,z_3,z_r)=TV(z_\ell,z_1,z_3,z_r)=TV(z_\ell,z_1,z_2,z_3,z_r).
$$
\end{enumerate}
\item Finally, based on the construction of $r_1,$ $r_2$, $r_3$, we can see that $I_{j-1}$, $I_j$, $I_{j+1}$ are not troubled cells. Since $r_1\geq\frac{a+b}2$, then $I_{j-2}$ can only be troubled cells of type II or IV. Finally, since $r_3\leq \frac{a+b}2$, then $I_{j+2}$ can only be a troubled cell of type III and V.
\end{enumerate}
\end{proof}
\begin{theorem}\label{thm_redefine_special2}
Suppose $I_j$ is an ETC, and the influence region is given by Case I2 in Definition \ref{ir}. If $\lambda=\frac4{a-b}$, then we can find $r_1$, $r_2$, $r_3$, $r_r$ defined in cells $I_{j-1},$ $I_j$, $I_{j+1}$, $I_{j+2}$, respectively, such that
$$
a\geq r_1\geq r_2\geq r_3\geq b,\quad r_1\geq\frac{a+b}2,\quad z_\ell-r_2\leq\frac2\lambda,\quad r_1-r_3\leq\frac2\lambda,\quad r_2-r_r\leq\frac2\lambda,\quad  r_3-s_r\leq\frac2\lambda,\quad b\leq r_r\leq\frac{a+b}2
$$
under the conditions that
$$
z_1=z_2=\frac13A+\frac2{3\lambda},\quad z_3=\frac13A-\frac4{3\lambda},\quad\frac{7a+5b}4< A\leq\frac{5a+b}2,
$$
and one of the following two inequalities
$$
s_r+z_r< \frac{a+3b}2,\quad z_r< \frac{a+3b}4.
$$
In addition, the proposed numerical approximations satisfy
$$
\sum_{j=1,2,3,r}z_j=\sum_{j=1,2,3,r}r_j,\quad  TV(z_\ell,z_1,z_2,z_3,z_r,s_r)\geq TV(z_\ell,r_1,r_2,r_3,r_r,s_r).
$$
Moreover, after updating the numerical approximations, $I_i$, $i=j-1, \cdots, j+2$ are not troubled cells. If $I_{j+3}$ is a troubled cell, it can only be a troubled cell of type III or V. If $I_{j-2}$ is a troubled cell, it can only be a troubled cell of type II and IV.
Similarly, by choosing the influence region as Case I2 in Definition \ref{ir}, we can find $r_\ell$, $r_1$, $r_2$, $r_3$ defined in cells $I_{j-2}$, $I_{j-1},$ $I_j$, $I_{j+1}$, respectively, such that
$$
a\geq r_1\geq r_2\geq r_3\geq b,\quad a\geq r_\ell\geq\frac{a+b}2,\quad s_\ell-r_1\leq\frac2\lambda,\quad r_\ell-r_2\leq\frac2\lambda,\quad r_1-r_3\leq\frac2\lambda,\quad r_2-z_r\leq\frac2\lambda,\quad r_3\leq\frac{a+b}2
$$
under the conditions that
$$
z_1=\frac13A+\frac4{3\lambda},\quad z_2=z_3=\frac13A-\frac2{3\lambda},\quad\frac{5a+7b}4> A\geq\frac{a+5b}2.
$$
and one of the following two inequalities
$$
z_\ell> \frac{3a+b}4,\quad s_\ell+z_\ell> \frac{3a+b}2.
$$
In addition, the proposed numerical approximations satisfy
$$
\sum_{j=\ell,1,2,3}z_j=\sum_{j=\ell,1,2,3}r_j,\quad  TV(s_\ell,z_\ell,z_1,z_2,z_3,z_r)\geq TV(s_\ell,r_\ell,r_1,r_2,r_3,z_r).
$$
Moreover, after updating the numerical approximations, $I_i$, $i=j-2, \cdots, j+1$ are not troubled cells. If $I_{j+2}$ is a troubled cell, it can only be a troubled cell of type III or V. If $I_{j-3}$ is a troubled cell, it can only be a troubled cell of type II and IV.
\end{theorem}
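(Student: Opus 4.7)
The plan is to establish the first half of the theorem (the case $A>\tfrac{7a+5b}{4}$) directly, and to obtain the second half by applying the first half to the reflected data $\tilde{u}_{j+k}:=a+b-u_{j-k}$. This involution preserves $[b,a]$, maps $A$ to $3(a+b)-A$, exchanges $(z_\ell,s_\ell)\leftrightarrow(z_r,s_r)$, and sends the second half's hypotheses precisely into those of the first half; reflecting the $\tilde{r}_i$ returned by the first half yields the required $r_i$ for the second half.

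For the first half I propose the symmetric construction
\begin{equation*}
p:=\tfrac{A+z_r}{4}+\tfrac{1}{\lambda},\qquad q:=\tfrac{A+z_r}{4}-\tfrac{1}{\lambda},\qquad r_1=r_2=p,\qquad r_3=r_r=q,
\end{equation*}
so that $r_1-r_3=r_2-r_r=\tfrac{2}{\lambda}$ and $r_1+r_2+r_3+r_r=A+z_r$ automatically. The nontrivial bounds $p\geq\tfrac{a+b}{2}\geq q$ collapse to $a+3b\leq A+z_r\leq 3a+b$: the lower bound uses only $A>\tfrac{7a+5b}{4}$ and $z_r\geq b$; the upper bound is where the sub-case hypothesis enters, since if $z_r<\tfrac{a+3b}{4}$ then $A\leq\tfrac{5a+b}{2}$ gives $A+z_r<\tfrac{11a+5b}{4}\leq 3a+b$, and if $s_r+z_r<\tfrac{a+3b}{2}$ then $s_r\geq b$ forces $z_r<\tfrac{a+b}{2}$ and hence $A+z_r<3a+b$. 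The remaining jump bounds $z_\ell-r_2\leq\tfrac{2}{\lambda}$ and $r_3-s_r\leq\tfrac{2}{\lambda}$ reduce via $r_2=p\geq\tfrac{a+b}{2}$ and $r_3=q\leq\tfrac{a+b}{2}$ to the trivial $z_\ell\leq a$ and $s_r\geq b$. The trouble-cell classifications for the interior and neighboring cells follow from $p-q=\tfrac{2}{\lambda}$ (Definition~\ref{def} uses strict inequalities) together with $p\geq\tfrac{a+b}{2}\geq q$, as in the closing part of Theorem~\ref{thm_redefine_special}.

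The core step is the TV inequality. Writing $\delta:=p-z_1=q-z_3=\tfrac{z_r-z_3}{4}$ and using $z_1=z_2$, $r_1=r_2$, $r_3=r_r$, it reduces to
\begin{equation*}
|z_\ell-p|+|q-s_r|\;\leq\;|z_\ell-z_1|+|z_3-z_r|+|z_r-s_r|.
\end{equation*}
The reverse triangle inequality gives $|z_\ell-p|-|z_\ell-z_1|\leq|\delta|$. I would then split on the sign of $\delta$. When $\delta\leq 0$ (i.e., $z_r\leq z_3$, which covers the first sub-case entirely and part of the second), a case analysis on the position of $s_r$ relative to $z_r,q,z_3$ combined with $|z_3-z_r|+|z_r-s_r|\geq|z_3-s_r|$ yields $|q-s_r|-|z_3-z_r|-|z_r-s_r|\leq\delta$, giving a total bounded by $|\delta|+\delta=0$. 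When $\delta>0$, which is only possible in the sub-case $s_r+z_r<\tfrac{a+3b}{2}$, that hypothesis together with $z_r>z_3>\tfrac{a+3b}{4}$ forces $s_r<\tfrac{a+3b}{4}<z_3<z_r$; all absolute values open into a single linear expression and a direct computation gives $|q-s_r|-|z_3-z_r|-|z_r-s_r|=q+z_3-2z_r=-7\delta$, which together with the $\leq\delta$ bound on the first summand yields a total $\leq-6\delta<0$.

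The main obstacle is exactly this TV case analysis: one must reconcile the sign of $\delta$ with the orderings of $s_r,z_r$ relative to $q,z_3$ and with the two distinct sub-case assumptions, leveraging the clean identities $p-z_1=q-z_3$ and $p-q=z_1-z_3=\tfrac{2}{\lambda}$ to force each sub-sub-case to collapse to an expression $\leq 0$.
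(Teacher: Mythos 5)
Your construction is correct, and it verifies every claim in the statement, but it is genuinely different from the paper's. The paper's proof keeps $r_1=z_1$ fixed, pins $r_3=\frac{a+b}{2}$, forms an intermediate $\tilde z_2=z_2+z_3-\frac{a+b}{2}$, and then splits into four sub-cases according to whether $\tilde z_2\gtrless\frac{a+b}{2}+\frac1\lambda$ and $z_r\gtrless\frac{a+b}{2}-\frac1\lambda$, redistributing mass between $r_2$ and $r_r$ in each; the second half is dismissed with ``the same lines with minor changes.'' Your symmetric two-level choice $r_1=r_2=\frac{A+z_r}{4}+\frac1\lambda$, $r_3=r_r=\frac{A+z_r}{4}-\frac1\lambda$ conserves mass and enforces $r_1-r_3=r_2-r_r=\frac2\lambda$ by construction, reduces all the bound checks to the single inequality $a+3b\leq A+z_r\leq 3a+b$ (which is exactly where the Case I2 hypotheses enter), and collapses the paper's four-way split into a two-way split on the sign of $\delta=\frac{z_r-z_3}{4}$; in fact your $\delta\leq0$ branch needs no sub-cases at all, since $z_r\leq q\leq z_3$ gives $|q-s_r|\leq(q-z_r)+|z_r-s_r|=-3\delta+|z_r-s_r|$, which is precisely the required bound. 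Your reflection $u\mapsto a+b-u$ for the second half is also legitimate here because Definition \ref{def} and the total variation are invariant under it (with types II/IV exchanged with III/V), so it buys a genuine halving of the work that the paper only gestures at. The trade-off is that the paper's construction perturbs the data less (it leaves $r_1=z_1$ untouched and only moves $r_2,r_r$ when forced to), which may matter for resolution in practice, whereas yours flattens all four interior cells to two values; for the purposes of this theorem, which only needs existence of admissible $r$'s, both are equally valid.
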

\begin{proof}
We consider Case I2 in Definition \ref{ir} only, and the proof for Case I3 can be obtained following the same lines with some minor changes. We construct the $r_i's$, $i=1,2,3,r$ as follows.
\begin{enumerate}
\item Define $\tilde{z}_j=z_j$, $j=1,r$ and $\tilde{z}_3=\frac{a+b}2\geq z_3$, $\tilde{z}_2=z_2+z_3-\tilde{z}_3>\frac{a+b}2$. Clearly, $z_2\geq\tilde{z}_2\geq\tilde{z}_3\geq z_3$. Moreover, we define $r_j=\tilde{z}_j$, $j=1,3$.
\item If $\tilde{z}_2\geq \frac{a+b}2+\frac1\lambda$ and $\tilde{z}_r\leq\frac{a+b}2-\frac1\lambda$, define $r_2=\tilde{z}_2-\frac1\lambda$ and $r_r=\tilde{z}_r+\frac1\lambda.$ Clearly,
$$
a\geq r_1=z_1=z_2\geq\tilde{z}_2\geq r_2\geq \frac{a+b}2=r_3=\tilde{z}_3\geq r_r\geq \tilde{z}_r=z_r\geq b.
$$
By Lemma \ref{compare0}, we know $r_1=z_1\geq\frac{a+b}2$. Since $r_3=\frac{a+b}2$, we have
$$
r_1-r_3\leq a-\frac{a+b}2=\frac{a+b}2\leq\frac2\lambda,\quad r_3-s_r\leq\frac{a+b}2-b=\frac{a-b}2=\frac2{\lambda}.
$$
Since $r_2\geq \frac{a+b}2$, then $z_\ell-r_2\leq a-\frac{a+b}2=\frac{a+b}2\leq\frac2\lambda$. Moreover,
$$
r_2-r_r=\tilde{z}_2-\frac1\lambda-\tilde{z}_r-\frac1\lambda\leq z_2-z_r-\frac2\lambda\leq a-b-\frac2\lambda=\frac2\lambda.
$$
Clearly,
$$\sum_{j=\ell,1,2,3,r}z_j=\sum_{j=\ell,1,2,3,r}\tilde{z}_j=\sum_{j=\ell,1,2,3,r}r_j.
$$
Finally,
\begin{align*}
&TV(z_\ell,z_1,z_2,z_3,z_r,s_r)=TV(z_\ell,z_1,z_2,\tilde{z}_2,\tilde{z}_3,z_3,z_r,s_r)\geq TV(z_\ell,\tilde{z}_1,\tilde{z}_2,\tilde{z}_3,\tilde{z}_r,s_r)\\
=&TV(z_\ell,r_1,\tilde{z}_2,r_2,r_3,r_r,\tilde{z}_r,s_r)\geq TV(z_\ell,r_1,r_2,r_3,r_r,s_r).
\end{align*}
\item If $\tilde{z}_2\leq \frac{a+b}2+\frac1\lambda$ and $\tilde{z}_r\leq\frac{a+b}2-\frac1\lambda$, define $r_2=\frac{a+b}2$ and $r_r=\tilde{z}_2+\tilde{z}_r-r_2$. Clearly,
$$
\tilde{z}_2\geq r_2=\frac{a+b}2= r_3=\tilde{z}_3\geq r_r\geq \tilde{z}_r=z_r\geq b.
$$
Moreover, $r_2-r_r\leq \frac{a+b}2-b=\frac{a-b}2\leq\frac2\lambda.$
All the other inequalities are exactly the same as in case 2.
\item If $\tilde{z}_2\geq \frac{a+b}2+\frac1\lambda$ and $\tilde{z}_r\geq\frac{a+b}2-\frac1\lambda$, define $r_r=\frac{a+b}2$ and $r_2=\tilde{z}_2+\tilde{z}_r-r_r$. Clearly,
$$
\tilde{z}_2\geq r_2\geq r_3=\tilde{z}_3=\frac{a+b}2= r_r\geq \tilde{z}_r\geq b.
$$
Moreover, $r_2-r_r\leq a-\frac{a+b}2=\frac{a-b}2\leq\frac2\lambda.$
All the other inequalities are exactly the same as in case 2.
\item If $\tilde{z}_2\leq \frac{a+b}2+\frac1\lambda$ and $\tilde{z}_r\geq\frac{a+b}2-\frac1\lambda$, define $r_2=\tilde{z}_2$ and $r_r=\tilde{z}_r$. Clearly,
$$
\tilde{z}_2= r_2\geq r_3=\tilde{z}_3\geq r_r= \tilde{z}_r,\quad \frac{a+b}2\geq\frac{a+3b}2-s_r> r_r=z_r\geq b.
$$
Here we use the fact that $\tilde{z}_r=z_r<\frac{a+b}2$. Moreover, $r_2-r_r\leq\frac{a+b}2+\frac1\lambda-\frac{a+b}2+\frac1\lambda=\frac2\lambda.$
All the other inequalities are exactly the same as in the above case.
\item Finally, based on the construction given above and $r_r\leq\frac{a+b}2$, it is easy to see $I_i$, $i=j-1,\cdots,j+2$ are not troubled cells after the modification, Since $r_1\geq\frac{a+b}2$, and $r_r\leq\frac{a+b}2$, then $I_{j-2}$ can only be a troubled cell of type II and IV and $I_{j+3}$ can only be a troubled cell of type III and V.
\end{enumerate}
\end{proof}

\begin{theorem}\label{thm_redefine}
Suppose the numerical approximations are within the interval $[b,a]$, and $I_j$ is an ETC. The numerical approximations on $I_{j-3},\cdots,I_{j+3}$ are $s_\ell$, $z_\ell$, $z_1$, $z_2$, $z_3$, $z_r$, $s_r$, respectively, with $z_1\geq z_2\geq z_3$ and $z_1\geq z_3+\frac2\lambda$. The influence region is given in Definition \ref{ir}. If we take $\lambda=\frac4{a-b}$, then we can find $r_\ell,$ $r_1$, $r_2$, $r_3$, $r_r$ defined in cells $I_{j-2},$ $\cdots,$ $I_{j+2}$, respectively, without changing the numerical approximations on the boundary cells in the influence region, such that the proposed new numerical approximations satisfy
$$
\sum_{j=\ell,1,2,3,r}z_j=\sum_{j=\ell,1,2,3,r}r_j,\quad TV(s_\ell,z_\ell,z_1,z_2,z_3,z_r,s_r)\geq TV(s_\ell,r_\ell,r_1,r_2,r_3,r_r,s_r).
$$
Moreover, one of the possible troubled cell in the influence region is the one on the right boundary and it can only be a troubled cell of type III or V. The other possible troubled cell in the influence region is the one on the left boundary and it can only be a troubled cell of type II or IV. Finally, the partition lines at the boundaries of the influence region keep the same.
\end{theorem}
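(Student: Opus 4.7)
The plan is to prove Theorem \ref{thm_redefine} by combining Theorem \ref{thm_mintv} with the two special-case theorems \ref{thm_redefine_special} and \ref{thm_redefine_special2}. The hypothesis $b\leq z_i\leq a$ with $z_1\geq z_2\geq z_3$ and $z_1\geq z_3+2/\lambda$ places $(z_1,z_2,z_3)$ in the admissible set $G$, so Theorem \ref{thm_mintv} furnishes candidates $(\tilde z_1,\tilde z_2,\tilde z_3)\in G$ which preserve $A=z_1+z_2+z_3$, reduce the total variation relative to the original data, and fall into one of the ten configurations listed in Table \ref{mintv}. The remaining task is to turn those candidates into admissible reassigned values $(r_\ell,r_1,r_2,r_3,r_r)$, while keeping the two endpoints of the influence region frozen, so that the five neighbor-gap inequalities \eqref{rs} of Corollary \ref{time_useful} hold and the intersection-of-partition-lines constraint is met.

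First I would dispatch the eight generic cases 1, 2, 4, 5, 7, 8, 9, 10 of Table \ref{mintv}. In each of these the explicit formula for $\tilde z_i$, together with the defining hypotheses of the case (e.g. $A\geq 3z_\ell-2/\lambda$ and $z_3\geq z_r$), already forces the five inequalities in \eqref{rs} when we set $r_\ell=z_\ell$, $r_r=z_r$, and $r_i=\tilde z_i$. The verification reduces to one- or two-line manipulations using $\lambda=4/(a-b)$ and Lemma \ref{ineq}; the bounds $\tilde z_1\geq(a+b)/2\geq \tilde z_3$ from Theorem \ref{thm_mintv} combined with $s_\ell,s_r\in[b,a]$ immediately yield $s_\ell-r_1\leq 2/\lambda$ and $r_3-s_r\leq 2/\lambda$, while $r_1-r_3=2/\lambda$ is built into $G$.

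The delicate cases are Cases 3 and 6, in which $\tilde z_1=\tilde z_2=A/3+2/(3\lambda)$ and $\tilde z_3=A/3-4/(3\lambda)$ (or the mirror configuration). Here one of $z_\ell-\tilde z_2\leq 2/\lambda$ or $\tilde z_2-z_r\leq 2/\lambda$ can fail when $A$ becomes too large or too small, and the choice between a five-cell and a six-cell influence region in Definition \ref{ir} is precisely designed to remedy this. If $A$ lies in the moderate window where the excess can be absorbed within the five-cell merge I4, Theorem \ref{thm_redefine_special} produces the required $(r_1,r_2,r_3)$ by flattening $r_3$ (or $r_1$) to $(a+b)/2$ and pushing the residual mass onto $r_2$. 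When $A$ is too extreme and the additional threshold conditions on $z_r, s_r+z_r$ (or $z_\ell, s_\ell+z_\ell$) are triggered, Definition \ref{ir} selects the six-cell region I2 or I3, and Theorem \ref{thm_redefine_special2} produces the analogous reassignment, with the extra cell's value taking over the role that $r_r$ (or $r_\ell$) would otherwise play.

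The claims about the boundary troubled-cell types then follow from the sign information $r_1\geq (a+b)/2$ and $r_3\leq (a+b)/2$: any of the conditions \eqref{tc1}, \eqref{tc3}, \eqref{tc5} at $I_{j-2}$ would require $s_\ell-r_1>2/\lambda$, impossible since $s_\ell\leq a$ and $r_1\geq(a+b)/2$ give $s_\ell-r_1\leq(a-b)/2=2/\lambda$; only types II and IV, which involve $s_\ell-z_\ell$ rather than $s_\ell-r_1$, survive. The symmetric argument at $I_{j+2}$ leaves only types III and V. The main obstacle is not conceptual but bookkeeping: one must carefully track, across all ten subcases of Table \ref{mintv} and the further bifurcation of Cases 3 and 6 across I2, I3, I4 of Definition \ref{ir}, that the chosen $r_i$'s lie in $[b,a]$, respect the required monotonicity, and satisfy each inequality in \eqref{rs}. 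Once that case-chase is complete, conservation of mass and reduction of total variation are inherited directly from the constructions of Theorems \ref{thm_mintv}, \ref{thm_redefine_special}, and \ref{thm_redefine_special2}, closing the proof.
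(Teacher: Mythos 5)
Your proposal is correct and follows essentially the same route as the paper: obtain $(\tilde z_1,\tilde z_2,\tilde z_3)$ from Theorem \ref{thm_mintv} and Table \ref{mintv}, set $r_i=\tilde z_i$ with frozen endpoints in the straightforward cases, fall back on Theorems \ref{thm_redefine_special} and \ref{thm_redefine_special2} for Cases 3 and 6 and the six-cell regions I2/I3, and deduce the boundary troubled-cell types from $r_1\geq\frac{a+b}2\geq r_3$ via Corollary \ref{time_useful}. One small caution: Case 7 is not as generic as you claim, since Table \ref{mintv} only prescribes a range for $\tilde z_1$ there; the paper splits it into three sub-ranges, two of which must be routed through the same special theorems as Cases 3 and 6, and the third of which needs a separate estimate to get $z_\ell-r_2\leq\frac2\lambda$ --- but this is exactly the kind of bookkeeping your fallback mechanism is designed to absorb.
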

\begin{proof}
We have demonstrated some partial results in Theorems \ref{thm_redefine_special} and \ref{thm_redefine_special2}. Now we will show that if the influence region is given as Case I4 in Definition \ref{ir}, then the adjusted numerical approximations satisfy
$$
a\geq r_1\geq r_2\geq r_3\geq b,\quad r_1\geq\frac{a+b}2,\quad  r_3\leq\frac{a+b}2,\quad r_1-r_3\leq\frac2\lambda,\quad z_\ell-r_2\leq\frac2\lambda,\quad r_2-z_r\leq\frac2\lambda.
$$
We discuss the cases given in Table \ref{mintv}.
\begin{itemize}
\item \underline{\bf{Case 1.}} We take $r_i=z_i$, with $i=1,2,3$. Then we only need to show $z_\ell-r_2\leq\frac2\lambda,\ r_2-z_r\leq\frac2\lambda$. Actually,
$$
r_2-z_r=z_2-z_r=z_2-z_3\leq z_1-z_3=\frac2\lambda.
$$
Finally, since $3z_\ell\leq A+\frac2\lambda\leq 3z_1$, then
$$
z_\ell-r_2=z_\ell-z_2\leq z_1-z_2\leq z_1-z_3\leq\frac2\lambda.
$$
\item \underline{\bf{Case 2.}} We take $r_i=z_i$, with $i=1,2,3$. It is easy to see that $z_1\geq z_\ell$. We follow the same analysis in Case 1 except
$$
r_2-z_r=z_2-z_r\leq z_1-z_r\leq\frac{a+b}2-b\leq\frac{a-b}2=\frac2\lambda.
$$
\item \underline{\bf{Case 3.}} By Lemma \ref{compare0} and Theorems \ref{thm_redefine_special} and \ref{thm_redefine_special2}, we only assume $\displaystyle \frac{a+5b}2\leq A< a+2b$, and take $r_i=z_i$, with $i=1,2,3$. We follow the same analysis in Case 1 except
$$
r_2-z_r=\frac13A+\frac2{3\lambda}-z_r\leq \frac{a+2b}3+\frac2{3\lambda}-b=\frac{a-b}3+\frac2{3\lambda}\leq\frac4{3\lambda}+\frac2{3\lambda}=\frac2\lambda.
$$
\item \underline{\bf{Cases 4, 5 and 6.}} The proof is similar to Cases 1, 2 and 3, so we skip it.
\item \underline{\bf{Case 7.}} We consider three possibilities.
\begin{enumerate}
\item If $\displaystyle \frac13A+\frac2{3\lambda}\geq z_r+\frac2\lambda$, then we can take $\displaystyle z_1=\frac13A+\frac2{3\lambda}$, $\displaystyle z_2=\frac13A+\frac2{3\lambda}$, $\displaystyle z_3=\frac13A-\frac4{3\lambda}$. Since $\displaystyle A\geq 3z_r+\frac4\lambda\geq 3b+a-b=a+2b,$ the conclusion follows from Theorems \ref{thm_redefine_special} and \ref{thm_redefine_special2}.
\item If $\displaystyle \frac13A+\frac4{3\lambda}\leq z_\ell$, then we can take
$\displaystyle z_1=\frac13A+\frac4{3\lambda}$, $\displaystyle z_2=\frac13A-\frac2{3\lambda}$, $\displaystyle z_3=\frac13A-\frac2{3\lambda}$. Since $\displaystyle A\leq 3z_\ell-\frac4\lambda\leq 3a-(a-b)=2a+b,$ the conclusion follows from Theorems \ref{thm_redefine_special} and \ref{thm_redefine_special2}.
\item If $\displaystyle \frac13A+\frac2{3\lambda}\leq z_r+\frac2\lambda$ and $\displaystyle \frac13A+\frac4{3\lambda}\geq z_\ell$, then
we can take $\displaystyle z_1=z_r+\frac2\lambda$, $\displaystyle z_3=z_r$, $\displaystyle z_2=A-2z_r-\frac2\lambda$, which returns to case 1. We can following the same proof to obtain all the required inequalities except $\displaystyle r_\ell-z_r\leq\frac2\lambda$, where a special condition was used for $z_\ell$ in case 1. Based on the assumptions, we have
$$
\frac13A+\frac2{3\lambda}\leq z_r+\frac2\lambda= z_1\leq z_\ell\leq\frac13A+\frac4{3\lambda},
$$
which further implies
$$
z_\ell+2z_r\leq 3\cdot\left(\frac13A+\frac4{3\lambda}\right)-2\cdot\frac2\lambda=A.
$$
Then we have
$$
z_\ell-r_2=z_\ell-z_2=z_\ell-\left(A-2z_r-\frac2\lambda\right)=z_\ell-A+2z_2+\frac2\lambda\leq\frac2\lambda.
$$
\end{enumerate}
\item \underline{\bf{Case 8.}} We take $r_i=z_i$, with $i=1,2,3$.  We follow the same analysis in Case 1 except
$$
z_\ell-r_2=z_\ell-z_2\leq z_r+\frac2\lambda-z_3=\frac2\lambda.
$$
\item \underline{\bf{Case 9.}} We take $r_i=z_i$, with $i=1,2,3$.  We follow the same analysis in Case 1 except
$$
z_\ell-r_2=z_\ell-z_2\leq z_1-z_2\leq\frac2\lambda,
$$
and
$$
r_2-z_r=z_2-z_r=A-\frac{3a+b}2-z_r\leq3z_\ell-\frac2\lambda-\frac{3a+b}2-a+\frac2\lambda\leq 2a-\frac{3a+b}2=\frac2\lambda.
$$
\item \underline{\bf{Case 10.}} We take $r_i=z_i$, with $i=1,2,3$.  We follow the same analysis in Case 1 except
$$
z_\ell-r_2=z_1-z_2\leq z_1-z_3=\frac2\lambda,
$$
and
$$
r_2-z_r=z_2-z_r\leq z_2-z_\ell+\frac2\lambda\leq z_1-z_\ell+\frac2\lambda=z_\ell-z_\ell+\frac2\lambda=\frac2\lambda.
$$
\end{itemize}
Following the same analysis in Theorem \ref{thm_redefine_special2}, we can show that the only possible troubled cell are located at the boundaries of the influence region and determine the possible types of the troubled cells, hence we skip the details. Moreover, since we did not modify the boundary cells, the partition lines keep the same at the two boundaries of the influence region.
\end{proof}

\subsubsection{From Theorem \ref{thm_redefine} to prove Theorem \ref{main}}
With all the preparation given above, we can prove Theorem \ref{main}. Actually, by Theorem \ref{thm_redefine} and Corollary \ref{time_useful}, we can redefine the numerical approximations within the influence region such that the only possible troubled cells in the influence regions are those at the boundaries, if $\Delta t<\lambda\Delta x$ with $\lambda=\frac4{a-b}$, and the troubled cells are mainly due to the strong shocks at the boundaries of the influence region. Such troubled cell or its neighbor, which were not updated, will be defined as an ETC, and the influence region of the new ETC will overlap with tht of the originial one, which contradicts the assuption that the ETCs are isolated. Therefore, no troubled cells exist in whole computational domain. Moreover, by Lemma \ref{modify_Harten}, the numerical scheme is TVD and MPP. Finally, we will perform the $L^2$ projection which keeps the maximum-principle and such procedure will not increase the total variation, and keeps the physical bounds for first-order schemes, hence we finish the proof.

\subsection{The stability analysis if the ETCs are not isolated}
In Theorem \ref{main}, we assume the ETCs are isolated. In practice, if there is only one shock in the exact solution, this assumption may be reasonable. However, if two shocks are interacting, then the ETCs may not be isolated. In this case, we may merge more cells. Based on the construction of the ETCs, we consider the cells to the right of the influence region only. Suppose the 5 cells, with the ETC as the center, in the influence region and the 5 cells on the right are given as $r_\ell,r_1,r_2,r_3,r_r,s_1,s_2,s_3,s_4,s_5$ from left to right, where $r's$ are the updated numerical approximations in the influence region given in Theorem \ref{thm_redefine}. For simplicity, we do not distinguish the troubled cells and the numerical approximations in the cells, e.g. we say $s_1$ is a troubled cell if $s_1$ is the numerical approximation in the troubled cell. 
The procedure is given as follows:
\begin{enumerate}
\item The influence region contains four cells, $r_\ell$, $r_1$, $r_2$, $r_3$. We search the cells from $r_3$ to the right, and select the ETC following the procedure in Section 2.

\item The influence region contains five cells, $r_\ell$, $r_1$, $r_2$, $r_3$, $r_r$ or six cells biased to the left. We search the cells from $r_r$ to the right, and select the ETC following the procedure in Section 2.

\item The influence region contains six cells, $r_\ell$, $r_1$, $r_2$, $r_3$, $r_r$, $s_1$. We search the cells from $s_1$ to the right, and select the ETC following the procedure in Section 2.

\item If the influence regions of the two ETCs are overlapping, we will merge the two influence regions together.
\end{enumerate}
\begin{theorem}
We can modify the $s_i's$ to be merged given above such that no troubled cells exist in the new influence region, except possibly the one on the right boundary of the new influence region. Therefore, after the whole procedure, the numerical approximations obtained from the first-order EL FV scheme satisfy the maximum-principle and they are total variation diminishing.
\end{theorem}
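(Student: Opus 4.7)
The proof would proceed by extending the adjustment strategy of Theorem \ref{thm_redefine} inductively to the enlarged influence region created by the merging procedure. After the initial step of Theorem \ref{thm_redefine}, the only potential troubled cells inside the original influence region lie on its right boundary and can only be of types III or V. In Cases 1, 2, and 3 of the procedure, the rightmost previously fixed value ($r_3$, $r_r$, or $s_1$) plays the role that $z_\ell$ played in Theorem \ref{thm_redefine}, and the newly identified ETC together with its influence region must lie within the search window to the right. When the two influence regions genuinely overlap, Case 4 tells us to merge them into one enlarged region.

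First, I would set up the analogous admissible set on the enlarged region. I would fix the two extreme cell values (the previously frozen $r$ on the far left and the far right value of the newly merged region) so that the numerical fluxes and partition lines at the two outermost interfaces are preserved. For the interior values I would impose the four principles (a)--(d) from the introduction: conservation of the total interior mass, non-increase of total variation, MPP within $[b,a]$, and the no-intersection condition $|v_{i+1}-v_i|\le 2/\lambda$ with $\lambda = 4/(a-b)$. By the monotone-rearrangement argument of Lemma \ref{lemma_r3}, the TV-minimizer in the admissible set can be taken monotone in the interior.

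Next, I would follow the same branch-and-case template as Theorem \ref{thm_mintv} and Theorem \ref{thm_redefine}, parametrized by the total interior mass $A$ and by the frozen boundary values. Because the enlarged region now contains at least six cells (Case 3), or more when influence regions genuinely overlap, the feasibility constraints analogous to Lemma \ref{compare0} and Lemma \ref{ineq} become strictly easier to satisfy: the same staircase structure with gaps $2/\lambda$ between consecutive values fits inside $[b,a]$ with more slack, and the extreme situations handled by Theorems \ref{thm_redefine_special} and \ref{thm_redefine_special2} admit analogous fixes by shifting two of the interior values by $1/\lambda$ to absorb the excess variation. This yields a reassignment in which the only possibly remaining troubled cell is located at the new right boundary.

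The main obstacle, and the reason the proof requires care, is the interaction between the frozen $r$-values on the left of the enlarged region and the newly redefined $s$-values on the right: both the mass conservation and the gap conditions must be compatible across the frozen/reassigned interface. I would handle this in exactly the same spirit as the boundary-cell analysis of Theorem \ref{thm_redefine_special2}, allowing at most one interior value adjacent to the frozen side to move by $1/\lambda$ in order to realign the staircase. Once the enlarged region has no interior troubled cell, the procedure either terminates or leaves a new right-boundary ETC which triggers a further extension; because the spatial domain is finite (or periodic with finitely many cells), this iteration must terminate. Applying Lemma \ref{modify_Harten} on the final modified grid then yields TVD and MPP, and the concluding $L^2$ projection onto the background uniform mesh preserves both, exactly as in the proof of Lemma \ref{modify_Harten}.
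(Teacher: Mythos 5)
Your proposal and the paper's proof diverge in a way that matters. The paper does \emph{not} set up a new admissible set or re-run the optimization of Table~\ref{mintv} on the enlarged region. Instead it applies the existing reassignment (Theorems~\ref{firstcase} and~\ref{thm_redefine}) to the \emph{new} ETC's influence region as is, and then disposes of the only dangerous case by a short contradiction: by Theorem~\ref{thm_redefine}, the only troubled cell that the new reassignment could leave at the left end of the new region would be of type~II or~IV, i.e.\ driven by a strong shock across its left boundary; but the cell to the left of that boundary is never updated by the new reassignment, so such a troubled cell would have to have existed \emph{before} the new ETC was selected, which contradicts the left-to-right selection of ETCs in Section~\ref{sec2}. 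Hence no new troubled cell appears in the already-processed region, only the right boundary of the merged region can remain troubled, and the sweep terminates (with a separate wrap-around argument for periodic boundary conditions). This observation is the entire content of the paper's proof, and it is absent from your write-up.

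The gap in your route is concrete: you assert that on the enlarged region ``the feasibility constraints analogous to Lemma~\ref{compare0} and Lemma~\ref{ineq} become strictly easier to satisfy'' and that the frozen/reassigned interface can be realigned ``by allowing at most one interior value adjacent to the frozen side to move by $1/\lambda$.'' Neither claim is proved, and neither is obvious. The enlarged region has \emph{two} frozen endpoint values inherited from two different original influence regions, and you must simultaneously conserve the interior mass, not increase the total variation, keep all values in $[b,a]$, and enforce every consecutive gap $\le 2/\lambda=(a-b)/2$; with more interior cells the mass-conservation constraint couples more unknowns to two boundary values you are not allowed to touch, so it is not a priori ``easier.'' Carrying this out would require a full new case analysis in the spirit of Theorem~\ref{thm_mintv}, which you do not supply. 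If you instead adopt the paper's observation --- the new reassignment leaves the left-adjacent cell untouched, so any left-boundary troubled cell must predate the new ETC and is excluded by the selection order --- the entire re-optimization becomes unnecessary, and the remainder of your argument (iteration to the right, termination by finiteness, Lemma~\ref{modify_Harten}, and the final $L^2$ projection) goes through as you describe.
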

\begin{proof}
We will show that the procedure of merging the new influence region into the original one and updating the numerical approximations based on the new influence region will not create any new troubled cells in the original influence region. 

Actually, following the procedure given in Theorem \ref{thm_redefine} and Theorem \ref{firstcase}, the only possible new troubled cell that can be created by the new influence region is the one on the left boundary and it must be associated with a strong shock at the left boundary. However, we do not update the numerical approximation next to the left boundary, hence such a troubled cell is not created by the procedure of the new ETC but exists before we select the new ETC. Without loss of generality, we assume $r_2$ is the ETC whose influence region does not overlap with any other ETCs on its left. Then the left boundary of the influence region of $r_2$ is a troubled cell. However, this violates the selection procedure of the troubled cells given in Section \ref{sec2}.

Finally, for periodic boundary conditions, it is possible that the last ETC may interact with the first ETC. By the strategies of merging overlapping influence regions, no troubled cells exist between the first and last ETCs in the computational domain. By the same analysis given above, the last ETC will not create any new troubled cells at the right boundary of its influence region. So we complete the proof.
\end{proof}
Therefore, based on the above analysis, we can merge the influence regions of all non-isolated ETCs together. 

\section{Numerical experiments}
\label{secexample}
\setcounter{equation}{0}
\subsection{One-dimensional test results}
\label{sec1d}
In this subsection, we present several numerical examples on one-dimensional Burger's equation
\begin{equation} 
\label{eq:burgers1d}
u_t+(\frac{u^2}{2})_x = 0.
\end{equation}
The time-stepping size $\Delta t$ is defined by
\begin{equation}
\label{eqdt}
    \Delta t = \frac{C}{\max\{u_0\} - \min\{u_0\} } \cdot \Delta x, 
\end{equation}
where $u_0$ is the initial conditions. When $0< C < 4$, the TVD stability is guaranteed from the main Theorem. 
The classical $CFL$ number is defined as
\begin{equation*}
    CFL = \frac{\Delta t}{\Delta x} \cdot \max\lvert f'(u)\rvert.
\end{equation*}
Below $N$ is the number of cells used, and $T$ is the final evolving time. 

\begin{exa}\rm (Accuracy test).
\label{exa:cfl_error}
We consider the Burgers' equation with the initial condition $u_0(x) = \sin(x)$ on the domain $x \in [0, 2\pi]$ with periodic boundary conditions. To ensure that the TVD property is preserved, it is noted that the CFL number must not exceed 2. This is due to the restriction on the time-stepping size $\Delta t$ specified in equation \eqref{eqdt}.

In our tests, we evaluate the method at $T=0.8$ (before the shock) and $T=1.3$ (after the shock), where the shock is located at $x=\pi$. Table \ref{tab:tab1} shows the results of the mesh refinement study. With a CFL number of 1.95 (or $C=3.9$ in equation \eqref{eqdt}), we compute the $L^1$ error of the solution on the entire domain (for the case before the shock) and on $x \in [0,\pi-0.1] \cup [\pi+0.1, 2\pi]$ (for the case after the shock). The results indicate that the proposed scheme has achieved the desired first-order convergence.
Figure \ref{fig:cfl_error} plots the errors versus CFL numbers. We set the total number of cells to be 200 and compute the $L^1$ errors for CFL numbers ranging from 0.05 to 8.25 (or equivalently, $C\in [0.1, 16.5]$). It is observed that the $L^1$ errors remain at an excellent level, even when the CFL number is relatively large. The decrease in error with the increase of CFL (for CFL less than 1) is likely due to the decrease in the number of time steps when the CFL is larger, leading to a reduced accumulation of global error.
Finally, Figure \ref{fig:tvd_sin} shows the total variations versus time when $C=3.9$. The proposed scheme is observed to exhibit the desired TVD property.

\begin{table}[htb]
	\label{tab:tab1}
	\caption{Exa.\ref{exa:cfl_error}. Mesh refinement study, $CFL=1.95$.}
	\begin{center}
		\begin{tabular}{c  c c  c c }
			\hline
			{}  &  \multicolumn{2}{c}{T=0.8}  & \multicolumn{2}{c}{T=1.3}\\ \hline 
		 	N  & $L^1$ error      &  order  & $L^1$ error &  order \\ \hline 
				$100$  &   4.91e-03    & -- &   3.49e-03    & --  \\
				$200$   &  2.76e-03   & 0.83 &  1.91e-03   & 0.87       \\
				$300$   &  1.97e-03  & 0.83  &   1.25e-03  & 1.06 \\
				$400$   &  1.63e-03  & 0.64 &   9.43e-04  & 0.97  \\
				\hline
			\end{tabular}
		\end{center}
	\end{table}
\end{exa}

\begin{figure}[htb]
\begin{minipage}{0.48\linewidth}
    \centering
    \includegraphics[width=\textwidth]{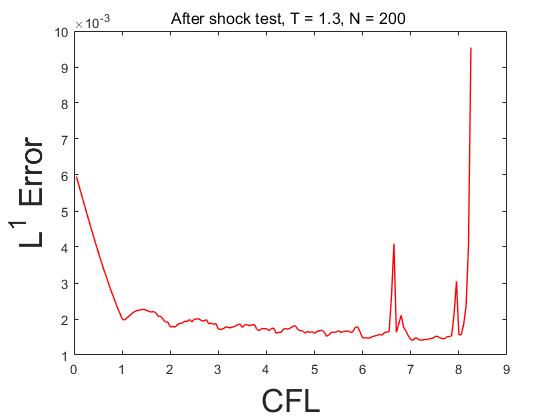}
    \caption{CFL vs. error plot for Example \ref{exa:cfl_error}. }
    \label{fig:cfl_error}
    \end{minipage}
\begin{minipage}{0.03\linewidth}
    \centering
    \ 
\end{minipage}
\begin{minipage}{0.48\linewidth}
    \centering
    \includegraphics[width=\textwidth]{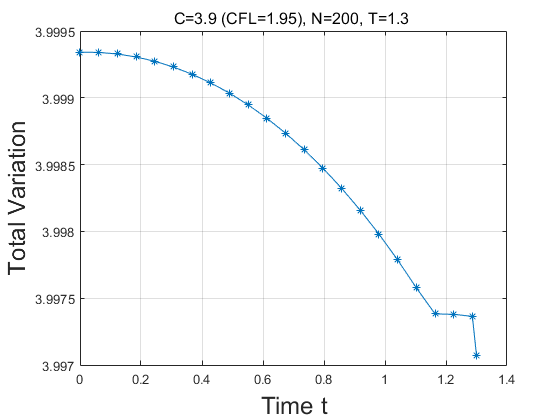}
    \caption{History of total variation for Example \ref{exa:cfl_error}.
    }
    \label{fig:tvd_sin}
    \end{minipage}
\end{figure}

\begin{exa}\rm (1D Riemann problems).
\label{exa:tvd}
We consider the Burgers' equation \eqref{eq:burgers1d} with Riemann initial condition
\begin{equation} \label{eq:shock}
       u_0(x) = \left\{ 
    \begin{aligned}
        2 &, &  x\leq 0,\\
        -1 &, & \text{otherwise,} 
    \end{aligned}
    \right.
\end{equation}
where $x \in [-\pi,\pi] $, and a constant boundary condition. The final time $T=3.6$ and numerical mesh is $N=100$. 
This test is to verify that the maximum time-stepping size in Theorem \ref{main} is sharp. 
Figure \ref{fig:tvd} shows the history of solutions' total variations under different choices of $C$. When $C=0.8$ ($CFL=\frac{8}{15}$), we observed that no cell-merging process occurred, and the total variations maintained constant. When $C$ equals $2.6$, $3.9$, and $4$ (or equivalently $CFL=\frac{26}{15}$, $2.6$, and $\frac{8}{3}$ ), the cell-merging process is triggered, yet the total variations remain constant. Finally, when $C$ exceeded 4, e.g. $C=4.9$, the total variations are observed to increase, i.e. the TVD property is violated. We present the TVD property of the solution with $C=3.9$ in Figure \ref{fig:shock1d}. 

\begin{figure}[htb]
\begin{minipage}{0.48\linewidth}
    \centering
    \includegraphics[width=\textwidth]{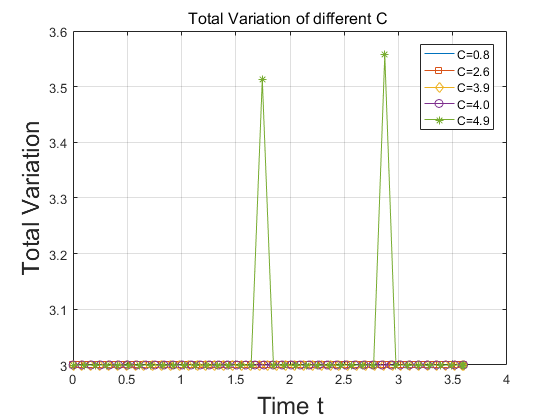}
    \caption{Example \ref{exa:tvd} with initial condition \eqref{eq:shock}. Total variation history for different $C$.}
    \label{fig:tvd}
    \end{minipage}
\begin{minipage}{0.03\linewidth}
    \centering
    \ 
\end{minipage}
\begin{minipage}{0.48\linewidth}
    \centering
    \includegraphics[width=\textwidth]{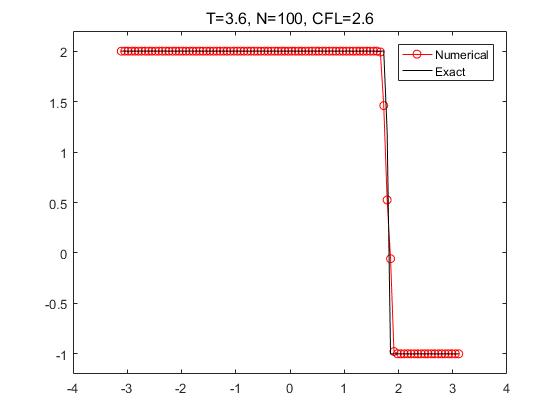}
    \caption{Example \ref{exa:tvd} with initial condition \eqref{eq:shock}. 
    }
    \label{fig:shock1d}
    \end{minipage}
\end{figure}
Next we present the results for the case of rarefaction wave with the initial condition
\begin{equation} \label{eq:raf}
       u_0(x) = \left\{ 
    \begin{aligned}
        -1 &, &  x\leq 0,\\
        1 &, & \text{otherwise,} 
    \end{aligned}
    \right.
\end{equation}
where $x \in [-\pi, \pi]$. We let $T=1.3$, $N=100$, and $CFL=1.95$ (or $C=3.9$). In fact, there is no time step limitation for the rarefaction wave problems since it does not need to merge the cells. Here we choose the $C=3.9$ to keep the consistency with the Theorem. The solution is plotted in Figure \ref{fig:raf1d}. An excellent resolution is observed even under a relatively large $\Delta t$.
Total variations of each time step are also shown in Figure \ref{fig:raf1dtv}. The total variation stays constant over time as expected.
\begin{figure}[htb]
\begin{minipage}{0.48\linewidth}
    \centering
    \includegraphics[width=\textwidth]{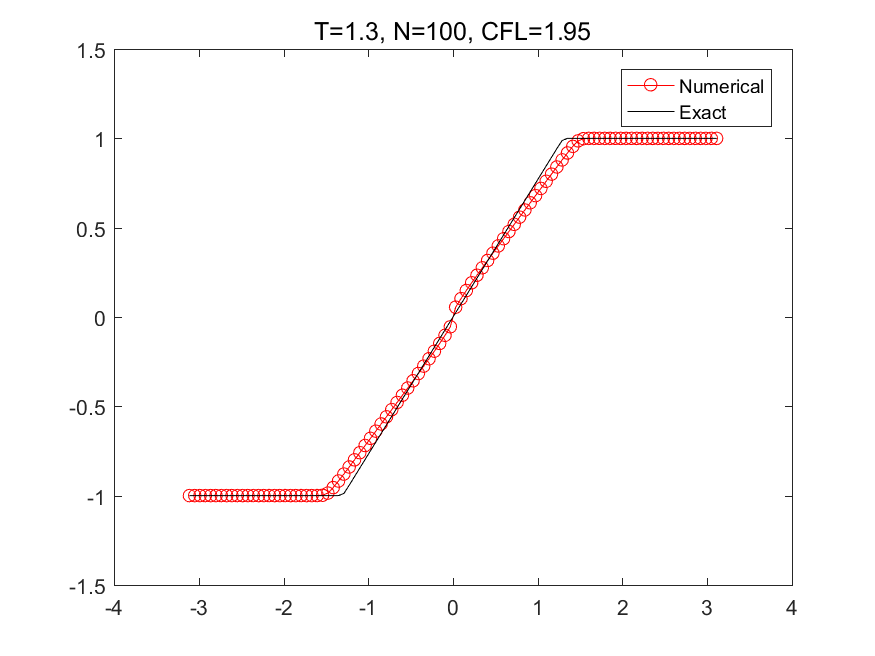}
    \caption{Example \ref{exa:tvd} with initial condition \eqref{eq:raf}. 
    }
    \label{fig:raf1d}
\end{minipage}
\begin{minipage}{0.03\linewidth}
    \centering
    \ 
\end{minipage}
\begin{minipage}{0.48\linewidth}
    \centering
    \includegraphics[width=\textwidth]{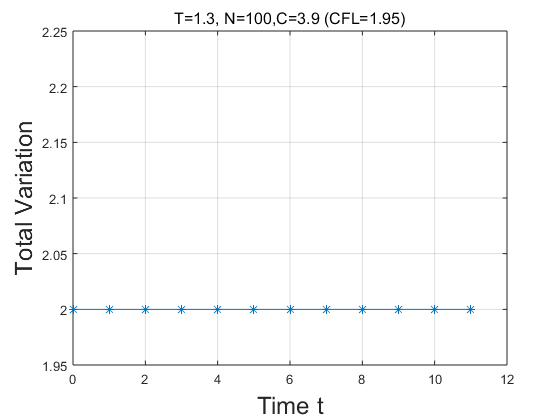}
    \caption{Example \ref{exa:tvd} with initial condition \eqref{eq:raf}. Total variation over time.
    }
    \label{fig:raf1dtv}
\end{minipage}
\end{figure}

Next, We present an extreme case to show the necessity of Cases I2 and I3 in Definition~\ref{ir}.
Consider the initial condition 
\begin{equation} \label{eq:extreme1}
       u_0(x) = \left\{ 
    \begin{aligned}
        2 &, &  x \leq 0, \\
        -0.6 &, &  0 < x\leq \Delta x,\\
        -2 &, & \text{otherwise,} 
    \end{aligned}
    \right.
\end{equation}
where $x\in [-\pi, \pi]$. We set final time $T=3$ and $N=100$. We test this initial condition with the merging procedure following case I4 in Definition \ref{ir}, that is \textbf{the merging procedure combines only five cells under any circumstances}. We choose $C=3.4$, $3.6$ and $3.9$ ($CFL = 1.7$, $1.8$, and $1.95 $) to compute the total variations at each time step. In Figure~\ref{fig:tvd2}, the initial total variation is $4$, and the total variation increases starting from the second step. As a result, the TVD property no longer holds, even though the constant $C$ is less than 4. To provide a comparison, we also test the same initial data again, but this time the merging procedure includes the Definition~\ref{ir}. In Figure~\ref{fig:tvd3}, we choose the same time-stepping size as in Figure~\ref{fig:tvd2}, i.e., $C=3.4$, $3.6$ and $3.9$ ($CFL = 1.7$, $1.8$, and $1.95 $). The TVD property is obtained in Figure~\ref{fig:tvd3}. Thus, the Definition~\ref{ir} is critical to obtain the optimal time-stepping size. 

\begin{figure}[htb]
\begin{minipage}{0.48\linewidth}
    \centering
    \includegraphics[width=\textwidth]{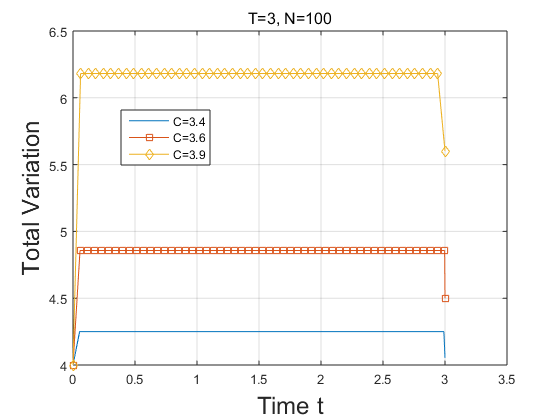}
    \caption{Exa.\ref{exa:tvd}. Initial condition of \eqref{eq:extreme1}. The numerical solution at $T=3$, $N=100$. Merging procedure excludes Def.~\ref{ir}.}
    \label{fig:tvd2}
\end{minipage}
\begin{minipage}{0.03\linewidth}
    \centering
    \ 
\end{minipage}
\begin{minipage}{0.48\linewidth}
    \centering
    \includegraphics[width=\textwidth]{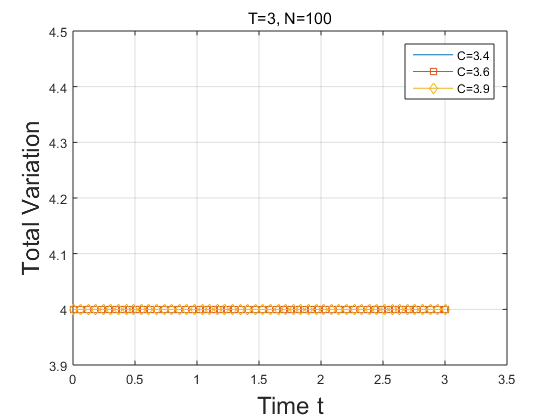}
    \caption{Exa.\ref{exa:tvd}. Initial condition of \eqref{eq:extreme1}. The numerical solution at $T=3$ and $N=100$. Merging procedure includes Def.~\ref{ir}.}
    \label{fig:tvd3}
\end{minipage}
\end{figure}

\end{exa}

\subsection{Two-dimensional test results}
\label{sec2d}
The proposed first order scheme can be extended to two-dimensional Burger's equation,
\begin{equation} 
\label{eq:burgers2d}
    u_t+(\frac{u^2}{2})_x + (\frac{u^2}{2})_y = 0,
\end{equation}
in a dimensional splitting fashion. The dimensional splitting method solves a 2D equation by alternating between solving two 1D equations, where we apply the first-order scheme to each one-dimensional equation via the Strang splitting. A detailed time splitting strategy under the EL FV framework can be found in \cite{Joseph2022}. All examples in the following use a constant boundary condition consistent with the Riemann initial condition. $N_x$ and $N_y$ are the cell numbers of $x, y$-direction respectively. 
The time-stepping size $\Delta t$ is bounded by,
\begin{equation*}
0< \Delta t < \frac{4 \cdot min\{\Delta x, \Delta y \} }{max\{u_0\} - min\{u_0\} }.  
\end{equation*}
The CFL is defined as
\begin{equation*}
   CFL =\Delta t \left({max_{x,y}\lvert f_x(u)\rvert/\Delta x + max_{x,y}\lvert f_y(u)\rvert/ \Delta y}\right),
\end{equation*}
and we adopt a 100$\times$100 grid.

\begin{exa}\rm (2D Burgers' equation with continuous initial condition).
\label{exa:2dsin}
We test 2D Burgers' equation \eqref{eq:burgers2d} with a continuous initial condition:
\begin{equation}
    \label{eq:conti2d}
          u_0(x) = \left\{ 
    \begin{aligned}
        (\sin(\pi x) \sin(\pi y) )^2  &, &  (x,y) \in (0,1)\times (0,1) \\
        0 &, & \text{otherwise,} 
    \end{aligned}
    \right. 
\end{equation}
where $(x,y) \in [0,2] \times [0,2]$. The initial condition is shown in Figure \ref{fig:sin0} in both mesh and contour plots. The $ CFL$ number is 7.6. The initial condition evolves up to $T=3$. In Figure \ref{fig:sin123}, we present both mesh and contour plots at $T=1$, $2$, and $3$. The solution generates a comet-like shape. The sharp discontinuity is formed at the head of the comet. The solution is observed to be stable, and the shock is captured correctly under a very large time stepping size.

\begin{figure}[htb]
    \centering
    \includegraphics[width=3in]{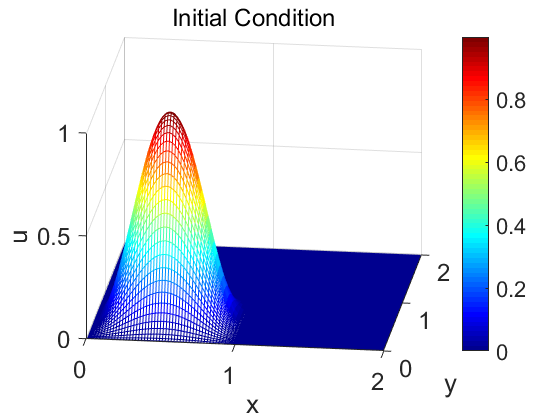}
    \includegraphics[width=3in]{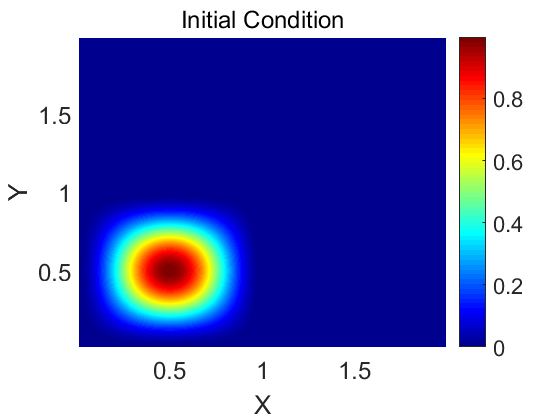}
    \caption{ Initial condition of Exa.\ref{exa:2dsin}.  $N_x=N_y=100$.}
    \label{fig:sin0}
\end{figure}

\begin{figure}[htb]
    \centering
    \includegraphics[width=2in]{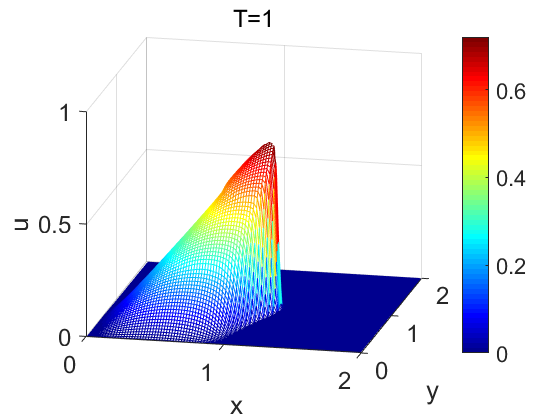}
    \includegraphics[width=2in]{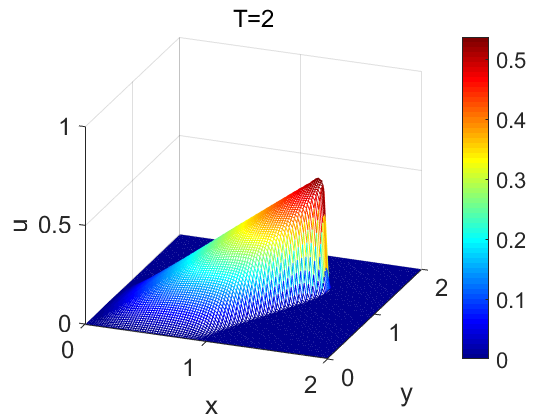}
    \includegraphics[width=2in]{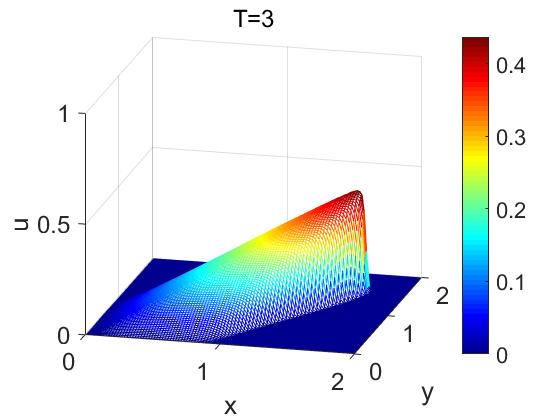} \\
    \includegraphics[width=2in]{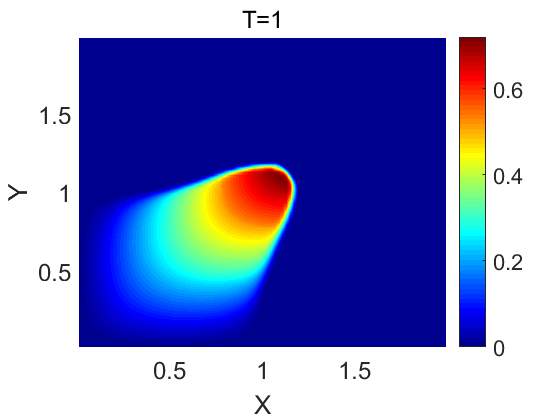}
     \includegraphics[width=2in]{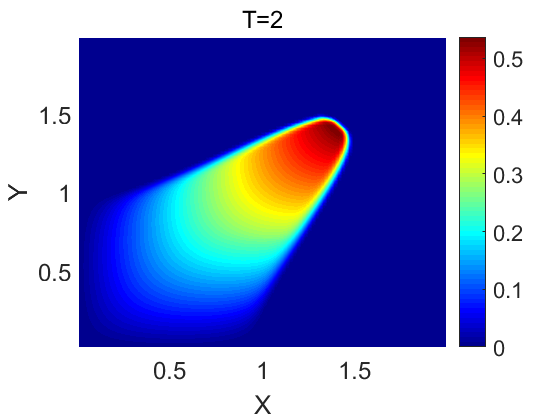}
      \includegraphics[width=2in]{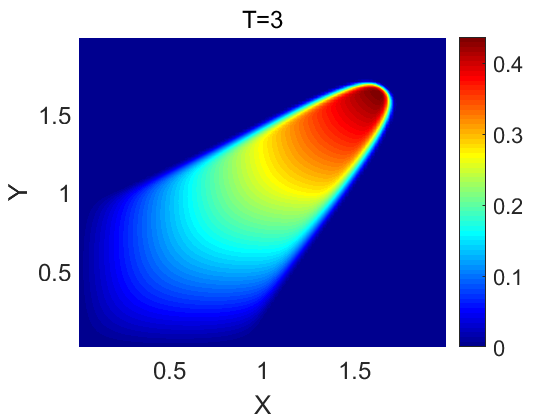}
    \caption{Exa.\ref{exa:2dsin}. $N_x=N_y=100$, $CFL=7.6$ .}
    \label{fig:sin123}
\end{figure}

\end{exa}

\begin{exa}\rm (2D Riemann problem).
\label{exa:dis2d}
Consider the 2D Burgers' equation with the Riemann Initial condition:
\begin{equation}\label{eq:rie2d}
          u_0(x) = \left\{ 
    \begin{aligned}
        1 &, &  (x,y) \in (0, 0.5]\times (0, 0.5], \\
        2 &, & (x,y) \in (-0.5, 0]\times [0, 0.5), \\
        3 &, & (x,y) \in [-0.5, 0)\times [-0.5, 0), \\
        4 &, & (x,y) \in (0,0.5) \times (-0.5, 0).
    \end{aligned}
    \right. 
\end{equation}
The domain is $(x,y) \in [-0.5,0.5] \times [-0.5,0.5]$. The analytic solution of this problem can be found in \cite{Yoon2007}.
The initial values (1,2,3,4) are assigned to each of the four quadrants, as shown in Figure \ref{fig:icshcok2d}. The $CFL$ number is $8.6$ and final time $T=0.1$. The solution contains three shocks and one rarefaction wave. One shock is formed at $\theta = 0$. Two shocks are formed at $\theta = \pi/2$ and $ \pi $, they interact to form a new shock. The rarefaction wave is formed at $\theta = 3 \pi/2$. The numerical scheme has the expected performance and sharply captures the shocks and rarefaction waves under a large time stepping size. 

\begin{figure}[htb]
    \centering
    \includegraphics[width=2.5in]{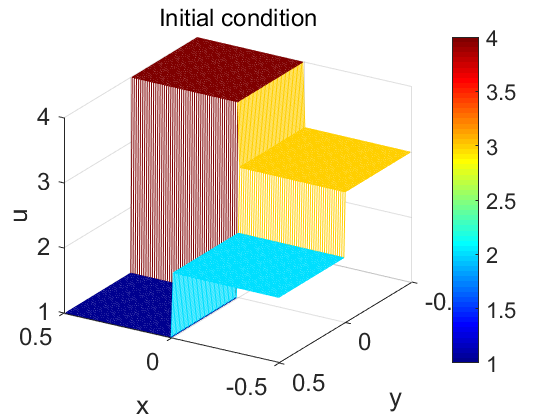}
    \includegraphics[width=2.5in]{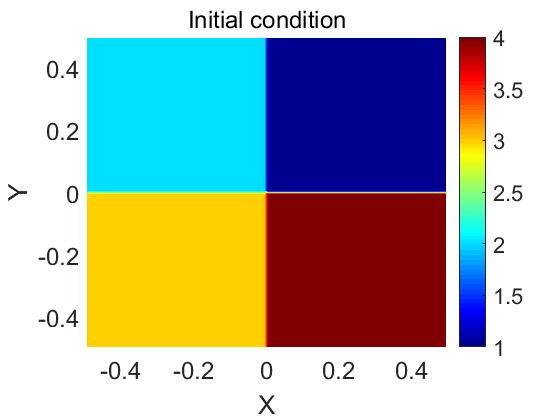}
    \caption{Initial condition of Exa.\ref{exa:dis2d}.  $N_x=N_y=100$. }
    \label{fig:icshcok2d}
\end{figure}

\begin{figure}[htb!]
    \centering
    \includegraphics[width=2.5in]{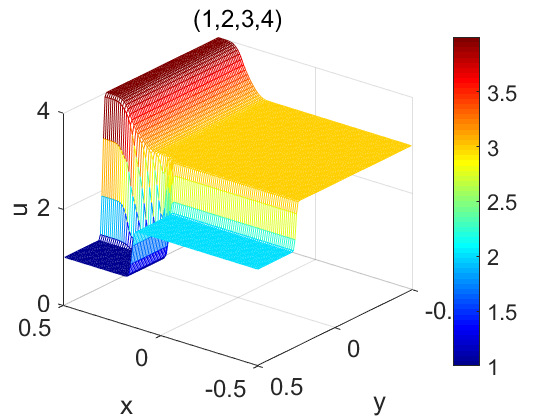}
    \includegraphics[width=2.5in]{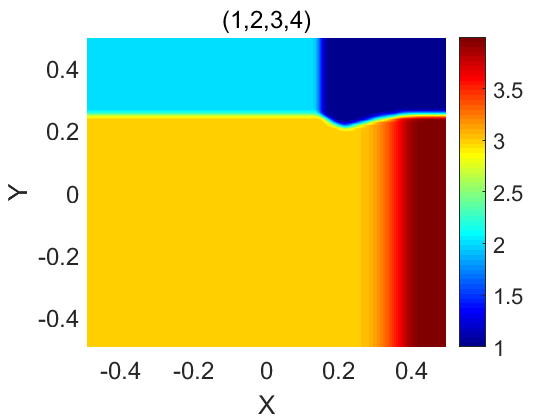}
    \caption{Exa.\ref{exa:dis2d},  $T = 0.1$, $N_x=N_y=100$, $CFL=8.6$. }
    \label{fig:shock2d}
\end{figure}

\end{exa}

\section{Concluding remarks}
\label{secend}
\setcounter{equation}{0}
In this paper, we constructed a first order EL FV scheme for nonlinear hyperbolic conservation laws. The main contribution was to introduce the merge strategy in the influence region of an ETC to handle shocks, leading to a scheme that is theoretically proved to be TVD and MPP. Moreover, the time step size is much larger than the traditional Eulerian FV schemes. We also theoretically demonstrated that if the size of the influence region was larger than some threshold, larger influence region would not result in larger time step sizes.
The extension of the first-order EL FV scheme to high-order ones can be done by applying high order spatial reconstruction with the minmod limiter and the bound-preserving limiter, and by applying high order SSP RK methods to temporal discretization in the space-time partitioned regions. Computational aspects of such extensions will be explored in \cite{Chen2022}.


\begin{thebibliography}{99}


\bibitem{Cockburn1989} B. Cockburn and C.-W. Shu, {\em TVB Runge-Kutta local projection discontinuous Galerkin finite element method for conservation laws II: general framework,} Mathematics of Computation, 52 (1989), 411-435.

\bibitem{Cai2021} X. Cai, J.-M. Qiu and Y. Yang, {\em An Eulerian-Lagrangian discontinuous Galerkin method for transport problems and its application to nonlinear dynamics}, Journal of Computational Physics, 439 (2021), 110392.

\bibitem{Chen2022} J. Chen, J. Nakao, J.-M. Qiu and Y. Yang, {\em A new Eulerian-Lagrangian finite volume Runge-Kutta WENO method for nonlinear hyperbolic problems}, in preparation.

\bibitem{gottlieb2009high} S. Gottlieb, D. Ketcheson and C-W. Shu, {\em High order strong stability preserving time discretizations}, Journal of Scientific Computing, 38 (2009), 251-289.


\bibitem{Harten} A. Harten, {\em High resolution schemes for hyperbolic conservation laws}, Journal of Computational Physics, 49 (1983), 357-393.

\bibitem{healy1998solution} R. Healy and T. Russell, {\em Solution of the advection-dispersion equation in two dimensions by a finite-volume Eulerian-Lagrangian localized adjoint method}, Advances in Water Resources, 21 (1998), 11-26.

\bibitem{huang2012eulerian} C-S. Huang, T. Arbogast and J. Qiu {\em An Eulerian--Lagrangian WENO finite volume scheme for advection problems}, Journal of Computational Physics, 231 (2012), 4028-4052.

\bibitem{leveque2002finite} L. Randall J,
{\em Finite volume methods for hyperbolic problems}, Cambridge university press (2022). 

\bibitem{Joseph2022} J. Nakao, J. Chen, and J.-M. Qiu, {\em An Eulerian-Lagrangian Runge-Kutta finite volume (EL-RK-FV) method for solving convection and convection-diffusion equations}, Journal of Computational Physics, 470 (2022), 111589.

\bibitem{qiu2008convergence} J.-M. Qiu, C.-W. Shu, {\em Convergence of Godunov-type schemes for scalar conservation laws under large time steps},
SIAM journal on numerical analysis,
46 (2008), {2211--2237}.

\bibitem{Yoon2007} D.K. Yoon and W.J. Hwang, {\em Two-dimensional Riemann Problem for Burgers' equation}, Bulletin of the Korean Mathematical Society, 45 (2008), 191-205.

\end{thebibliography}
\end{document}